\DeclareMathOperator{\sign}{sign}
\DeclareMathOperator{\e}{\mathrm{e}}
\DeclareMathOperator{\supp}{supp}
\DeclareMathOperator*{\argmin}{arg\,min}
\DeclareMathOperator{\R}{\mathbb{R}}
\DeclareMathOperator{\N}{\mathbb{N}}
\DeclareMathOperator{\E}{\mathbb{E}}
\DeclareMathOperator{\Pro}{\mathbb{P}}
\newcommand{\triplenorm}{{|\!|\!|}}
\newtheorem{theorem}{Theorem}
\newtheorem{lemma}[theorem]{Lemma}
\newtheorem{corollary}[theorem]{Corollary}
\newtheorem{remark}[theorem]{Remark}
\newtheorem{example}[theorem]{Example}
\newtheorem{definition}[theorem]{Definition}
\newcommand{\footremember}[2]{%
\footnote{#2}
\newcounter{#1}
\setcounter{#1}{\value{footnote}}%
}
\newcommand{\footrecall}[1]{%
\footnotemark[\value{#1}]%
}
\title{\large\bfseries A Quotient Property for Matrices with Heavy-Tailed Entries and its Application to Noise-Blind Compressed Sensing}
\date{\today}
\author{Felix Krahmer\footremember{TUM}{Department of Mathematics, Technische Universit\"at M\"unchen, 85748 Garching bei M\"unchen, Germany \newline (\href{mailto:felix.krahmer@tum.de}{felix.krahmer@tum.de}, \href{mailto:christian.kuemmerle@ma.tum.de}{christian.kuemmerle@ma.tum.de})} \and Christian K\"ummerle\footrecall{TUM} \and
Holger Rauhut\footremember{RWTH}{Chair for Mathematics C (Analysis), RWTH Aachen University, 52062 Aachen, Germany \newline (\href{rauhut@mathc.rwth-aachen.de}{rauhut@mathc.rwth-aachen.de})}}
\begin{document}

\maketitle
\vspace{-3mm}
{\let\thefootnote\relax\footnote{{
\hspace{-6.4mm}2010 Mathematics Subject Classification: 46B20, 46B09, 15A52, 65K10, 52A22. 

\hspace{-3.9mm} Key words: random matrices, quotient property, random polytopes, compressive sensing, $\ell_1$-minimization.}}}

\begin{abstract}
For a large class of random matrices $A$ with i.i.d. entries we show that the $\ell_1$-quotient property holds with probability exponentially close to 1. In contrast to previous results, our analysis does not require concentration of the entrywise distributions. We provide a unified proof that recovers corresponding previous results for (sub-)Gaussian and Weibull distributions. Our findings generalize known results on the geometry of random polytopes, providing lower bounds on the size of the largest Euclidean ball contained in the centrally symmetric polytope spanned by the columns of $A$. \\
At the same time, our results establish robustness of noise-blind $\ell_1$-decoders for recovering sparse vectors $x$ from underdetermined, noisy linear measurements $y=Ax+w$ under the weakest possible assumptions on the entrywise distributions that allow for recovery with optimal sample complexity even in the noiseless case. 
Our analysis predicts superior robustness behavior for measurement matrices with super-Gaussian entries, which we confirm by numerical experiments.
\end{abstract}
\vspace{4mm}
\section{Introduction}
\subsection{Random polytopes}
Let $A = (a_{ji})$ be a rectangular $m \times N$ random matrix with independent, symmetric and unit variance entries $a_{ji}$ and $m < N$, and denote by $B_p^N$ the unit ball of the $\ell_p$-norm in $\R^N$. In this paper, we study the geometry of the image $A B_1^N$ under quite general assumptions on the distribution of the entries $a_{ji}$. This object can be also regarded as the random polytope defined by the absolute convex hull of the columns of A, i.e.,
\[
A B_1^N =  \operatorname{span}\{\pm a_{1},\ldots,\pm a_{N}\},
\]
if $a_1,\ldots,a_N$ denote the columns of $A$.

For normally distributed $a_{ji}$, a result due to Gluskin and Kashin quantifies  the inclusion of an Euclidean ball $B_2^N$ in $A B_1^N$.
\begin{theorem}[\cite{Gluskin89,Kashin83,Wojtaszczyk10}] \label{thm_Gluskin}
If the $a_{ji}$ are independent mean-zero, variance one Gaussian random variables, there exist constants $C,D > 0$ such that if $N \geq 2 m$,
\begin{equation} \label{eq_result_Gluskin}
\Pro \Big( A B_1^N \supset  \frac{\sqrt{\log{(eN/m)}}}{D} B_2^N \Big) \geq 1 - \exp(-C m).
\end{equation}
\end{theorem}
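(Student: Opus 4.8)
The plan is to dualize the inclusion and then handle the uniformity over the sphere by a Gaussian comparison inequality rather than by a net: the exponential failure probability in \cref{eq_result_Gluskin} cannot be produced by a union bound over an $\varepsilon$-net of $S^{m-1}$, which is hopelessly lossy precisely in the regime $N=\Theta(m)$, where $\sqrt{\log(eN/m)}$ is of constant order. As a first step, note that since $AB_1^N$ is a closed, convex, symmetric body in $\R^m$, the inclusion $\rho B_2^m\subseteq AB_1^N$ is equivalent to $\rho=\sup_{y\in\rho B_2^m}\langle u,y\rangle\le\sup_{y\in AB_1^N}\langle u,y\rangle=\sup_{\|x\|_1\le1}\langle A^{T}u,x\rangle=\|A^{T}u\|_\infty$ for all $u\in S^{m-1}$; hence
\[
  \rho B_2^m\subseteq AB_1^N\quad\Longleftrightarrow\quad \min_{u\in S^{m-1}}\|A^{T}u\|_\infty=\min_{u\in S^{m-1}}\max_{v\in B_1^N}\langle A^{T}u,v\rangle\ \ge\ \rho ,
\]
and it suffices to show $\Pro\big(\min_{u\in S^{m-1}}\|A^{T}u\|_\infty< D^{-1}\sqrt{\log(eN/m)}\big)\le\exp(-Cm)$.

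I would then apply Gordon's Gaussian min--max comparison inequality (the direction that holds for arbitrary compact index sets, with no convexity assumption), taking the matrix to be $A^{T}\in\R^{N\times m}$, the min-set $S^{m-1}$ and the max-set $B_1^N$: for every $c\in\R$,
\[
  \Pro\Big(\min_{u\in S^{m-1}}\max_{v\in B_1^N}\langle A^{T}u,v\rangle<c\Big)\ \le\ 2\,\Pro\Big(\min_{u\in S^{m-1}}\max_{v\in B_1^N}\big(\langle h,v\rangle+\|v\|_2\,\langle g,u\rangle\big)<c\Big),
\]
where $h\sim\mathcal N(0,I_N)$ and $g\sim\mathcal N(0,I_m)$ are independent. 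The value of this step is that it converts a uniform statement over the sphere into a tail bound for a single random variable built from two independent Gaussian vectors, at the cost only of a factor $2$. To bound the auxiliary quantity from below I would substitute the explicit test vector $v^{\ast}=\tfrac1m\sum_{i\in S}\sign(h_i)\,e_i\in B_1^N$, where $S$ collects the indices of the $m$ largest $|h_i|$; since $v^{\ast}$ does not depend on $u$ and $\min_{u\in S^{m-1}}\langle g,u\rangle=-\|g\|_2$, this gives the deterministic lower bound
\[
  \min_{u\in S^{m-1}}\max_{v\in B_1^N}\big(\langle h,v\rangle+\|v\|_2\langle g,u\rangle\big)\ \ge\ \frac1m\sum_{i=1}^{m}|h|^{\downarrow}_i\ -\ \frac{\|g\|_2}{\sqrt m},
\]
where $|h|^{\downarrow}_1\ge\cdots\ge|h|^{\downarrow}_N$ denote $|h_1|,\dots,|h_N|$ sorted in decreasing order.

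It then remains to show the right-hand side exceeds $D^{-1}\sqrt{\log(eN/m)}$ with probability $1-\exp(-Cm)$, which I would do term by term. A $\chi^2$ tail bound gives $\Pro(\|g\|_2>(1+\varepsilon_0)\sqrt m)\le\exp(-c_1 m)$. The function $h\mapsto\tfrac1m\sum_{i=1}^{m}|h|^{\downarrow}_i=\tfrac1m\max_{|T|=m}\sum_{i\in T}|h_i|$ is $m^{-1/2}$-Lipschitz on $\R^N$, so Gaussian concentration yields $\Pro\big(\tfrac1m\sum_{i=1}^{m}|h|^{\downarrow}_i<\mu-t\big)\le\exp(-t^2m/2)$, where $\mu:=\E\,\tfrac1m\sum_{i=1}^{m}|h|^{\downarrow}_i$. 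A standard estimate for sums of Gaussian order statistics gives $\mu\ge c_2\sqrt{\log(eN/m)}$, and since $\mu$ is nondecreasing in $N$ while $\mu>1$ already in the boundary case $N=2m$ (the expected average of the upper half of $2m$ i.i.d.\ $|\mathcal N(0,1)|$ variables), one obtains $\mu\ge 1+\varepsilon_0+s_0+D^{-1}\sqrt{\log(eN/m)}$ once $D$ is a large enough universal constant (the two bounds being used according as $\sqrt{\log(eN/m)}$ is small or large). With $c=D^{-1}\sqrt{\log(eN/m)}$,
\[
  \Pro\Big(\tfrac1m\!\textstyle\sum_{i=1}^m|h|^{\downarrow}_i-\tfrac{\|g\|_2}{\sqrt m}<c\Big)\ \le\ \Pro\big(\|g\|_2>(1+\varepsilon_0)\sqrt m\big)+\Pro\big(\tfrac1m\!\textstyle\sum_{i=1}^m|h|^{\downarrow}_i<c+1+\varepsilon_0\big)\ \le\ \exp(-c_1 m)+\exp(-s_0^2m/2);
\]
chaining this through the comparison inequality and the duality proves \cref{eq_result_Gluskin}, the finitely many small $m$ being absorbed into the constants $D,C$.

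The step I expect to be the real obstacle is obtaining the \emph{exponential} bound $1-\exp(-Cm)$ rather than merely $1-(m/N)^{c}$: ordinary Lipschitz concentration of $\min_{u\in S^{m-1}}\|A^{T}u\|_\infty$ about its mean (which is itself of the right order $\asymp\sqrt{\log(eN/m)}$) only gives the latter, and that is useless when $N\asymp m$. What rescues the argument is passing through Gordon's inequality and then through a single scalar functional whose fluctuations are at scale $m^{-1/2}$, so that losing a constant in its value is an $\exp(-\Omega(m))$ event; this is precisely the mechanism by which the correct expectation is upgraded to the correct tail. The other ingredient one must not give away is the factor $\sqrt{\log(eN/m)}$ itself, which arises entirely from the lower estimate $\mu\gtrsim\sqrt{\log(eN/m)}$ for the average of the $m$ largest among $N$ i.i.d.\ standard Gaussians.
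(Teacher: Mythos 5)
Your proposal is correct in substance, but it takes a genuinely different route from the one the paper relies on. You dualize the inclusion to $\min_{u\in S^{m-1}}\|A^{T}u\|_\infty\ge\rho$ and then invoke Gordon's Gaussian min--max comparison to replace the uniform statement over the sphere by a single scalar functional $\tfrac1m\sum_{i\le m}|h|^{\downarrow}_i-\|g\|_2/\sqrt m$, whose $m^{-1/2}$-Lipschitz concentration delivers the $\exp(-Cm)$ tail; this is a clean, Gaussian-specific argument, and your diagnosis of why naive Lipschitz concentration or nets cannot give the exponential rate in the regime $N\asymp m$ is exactly right. The paper never proves \Cref{thm_Gluskin} directly (it is quoted from Gluskin/Kashin/Wojtaszczyk); instead it recovers it as the Gaussian special case of \Cref{theorem_QP}(b) and \Cref{cor_QP}(b), proved by a completely different mechanism: the dual reformulation of \Cref{lemma_QP_equivalence} in terms of $\big(\tfrac1N\sum_i|\langle a_i,w\rangle|^{\log N}\big)^{1/\log N}$, Mendelson's small-ball method (\Cref{theoremlemma31}) with the Rademacher-complexity bound of \Cref{lemma_upperbd_R}, and the small-ball estimates of \Cref{lemma_lowerbound_heavytailed,lemma_lowerbd_Q} via Montgomery--Smith and a Gaussian minorization. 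What each approach buys: Gordon's comparison exploits Gaussianity and yields the result with short, explicit computations, but it does not survive the passage to super-Gaussian or merely weak-moment entries, which is precisely the paper's point; the small-ball route is longer but needs only a one-dimensional comparison of tails and so covers the heavy-tailed and clipped-norm cases uniformly. Two details in your write-up deserve tightening, though neither is fatal: the claim $\mu>1$ at $N=2m$ should be justified (e.g., pairing the $2m$ variables and noting that the top-$m$ sum dominates the sum of the $m$ pairwise maxima gives $\mu\ge 2/\sqrt{\pi}>1$ for every $m$), and "absorbing the finitely many small $m$ into the constants" is glib as stated, since in the bounded-ratio regime your failure bound $2(\e^{-c_1m}+\e^{-s_0^2m/2})$ exceeds $1$ for moderate $m$; one must either split the margin more generously when $\log(\e N/m)$ is large or handle the finitely many remaining pairs $(m,N)$ by enlarging $D$ (using that $A$ has full rank $m$ almost surely) and shrinking $C$.
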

This statement corresponds to a lower bound on the \emph{inradius} of $A B_1^N$, i.e., the radius of the largest Euclidean ball that is contained in the random polytope $A B_1^N$.

Litvak et al. proved a similar result for $a_{ji}$ that fulfill a concentration property. Below $\|A\|_{2 \to 2}$ denotes
the spectral norm of a matrix $A$.
\begin{theorem}[{\cite{Litvak05}, see also \cite[Theorem 11.21.]{FoucartRauhut13}}] \label{thm_Litvak}
If there exist constants $a_1, a_2 > 0$ such that $\Pro( \|A\|_{2 \to 2} \geq a_1 \sqrt{N}) \leq \exp(-a_2 N)$ and if the third moments of the $a_{ji}$ are bounded by $\mu$, there exist constants $D(\mu), \widetilde{C}(a_1), C(a_2) > 0$ and $c > 0$ such that if $N \geq \widetilde{C}(a_1) m$,
\begin{equation} \label{eq_result_Litvak}
\Pro \Big( A B_1^N \supset  \frac{1}{D(\mu)} \big(\sqrt{\log{(eN/m)}} B_2^N \cap B_\infty^N \big) \Big) \geq 1 - c \exp(-C(a_2) m).
\end{equation}
\end{theorem}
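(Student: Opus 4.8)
The plan is to reduce the inclusion, by duality, to an $\ell_1$-quotient property, and then to establish the latter through a successive-approximation scheme whose core is a single uniform one-step reconstruction estimate. Write $L:=\log(eN/m)$ and, for $z\in\R^m$, let $\nu(z):=\max\{\|z\|_\infty,\|z\|_2/\sqrt{L}\}$, a norm whose unit ball is exactly $\sqrt{L}\,B_2^m\cap B_\infty^m$. Since $A$ has full row rank almost surely and $AB_1^N$ is a centrally symmetric polytope, the bipolar theorem (equivalently, the duality underlying the $\ell_1$-quotient property) makes $AB_1^N\supset D(\mu)^{-1}\big(\sqrt{L}\,B_2^m\cap B_\infty^m\big)$ equivalent to: every $z\in\R^m$ has a representation $z=Ax$ with $\|x\|_1\le D(\mu)\,\nu(z)$. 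Conditioning throughout on $\{\|A\|_{2\to 2}\le a_1\sqrt{N}\}$, whose complement has probability at most $\exp(-a_2N)\le\exp(-a_2\widetilde Cm)$ by assumption, it suffices to prove the \emph{one-step lemma}: with conditional probability at least $1-c\exp(-Cm)$, every $z\in\R^m$ admits an $x\in\R^N$ with $\|x\|_1\le C_0(\mu)\,\nu(z)$ and $\nu(z-Ax)\le\tfrac14\nu(z)$. Indeed, applying it to the residuals $z_0=z$, $z_k=z_{k-1}-Ax_k$ gives $\nu(z_k)\le 4^{-k}\nu(z)$, hence $z=A\sum_kx_k$ with $\|\sum_kx_k\|_1\le\tfrac43C_0(\mu)\,\nu(z)$, and $D(\mu):=2C_0(\mu)$ closes the reduction. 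A key point is that the one-step lemma need only be checked on a net of $\{\nu\le 1\}$ of \emph{constant} mesh in the $\nu$-metric — hence of cardinality $\exp(O(m))$ by a volumetric bound — because for a general $z$ the error to the nearest net point merely enters the residual and is removed by the next iteration.

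To prove the one-step lemma, fix $z$ with $\nu(z)=1$ and put $\widehat z=z/\|z\|_2$. The driving observation is that $A$ has many columns strongly correlated with $\widehat z$: the numbers $\langle a_i,\widehat z\rangle$ are i.i.d., mean zero and of unit variance, and the bound $\E|a_{ji}|^3\le\mu$ (with $\sum_j|\widehat z_j|^3\le\|\widehat z\|_\infty$) controls their tails through a combination of a Berry--Esseen estimate and a direct bound for the contribution of a single large entry. In the \emph{spread} regime, where $\nu(z)=\|z\|_2/\sqrt{L}$, these tail bounds show that, off an event of probability $\exp(-c_1m)$, the set $S:=\{i:|\langle a_i,\widehat z\rangle|\ge c_2\sqrt{L}\}$ satisfies $|S|\gtrsim m$; choosing $x$ as a scaled, signed combination of the columns $(a_i)_{i\in S}$ normalized so that $\langle Ax,\widehat z\rangle=\|z\|_2$ yields $\|x\|_1\lesssim\|z\|_2/\sqrt{L}=\nu(z)$, while $\|A\|_{2\to 2}\le a_1\sqrt N$ together with the (conditional) near-orthogonality of the selected columns controls the part of $Ax$ orthogonal to $z$ as well as the $\ell_\infty$-overshoot of the residual, so $\nu(z-Ax)\le\tfrac14$. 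In the complementary \emph{concentrated} regime, where $z$ is effectively $O(L)$-sparse and $\nu(z)=\|z\|_\infty$, correlations of order $\sqrt L$ cannot be expected (for $\widehat z=e_j$ the inner products are just copies of $a_{ji}$, with no Gaussian behaviour), but an elementary second/third-moment computation gives $\inf_{\|w\|_2=1}\Pro(|\langle a_i,w\rangle|\ge\tfrac12)\ge q(\mu)>0$; reconstructing $z$ from the $\gtrsim q(\mu)N\gtrsim m$ columns with $|\langle a_i,\widehat z\rangle|\ge\tfrac12$ and applying a few coordinate-wise correction steps costs $\|x\|_1\lesssim\|z\|_\infty=\nu(z)$ and again gives $\nu(z-Ax)\le\tfrac14$. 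It is precisely this concentrated regime, in which without entrywise concentration only constant-order correlations are available, that forces the intersection with $B_\infty^m$ and makes the present result weaker than the Gaussian bound of \cref{thm_Gluskin}.

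Two technical ingredients then complete the argument. First, the one-step lemma is made uniform by a union bound over the $\exp(O(m))$-net; this is where $N\ge\widetilde C(a_1)m$ is used, guaranteeing both that $|S|$ exceeds a sufficiently large multiple of $m$ (so the per-direction estimate survives the union bound) and that the overshoot controlled through $\|A\|_{2\to 2}\le a_1\sqrt N$ stays below the required level. Second, because only a third moment is assumed, the concentration estimates used above — the size of $S$, near-orthogonality of the selected columns, the $\ell_\infty$-overshoot, and even the concentration of $\|Av\|_2$, which can fail without a fourth moment — are not directly available; the standard remedy is to truncate the entries at a level of order $N^{1/6}$, prove the estimates for the bounded part by Bernstein-type inequalities, and bound the rare, small contribution of the discarded tails using $\E|a_{ji}|^3\le\mu$ alone. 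The constants are then as expected: $D(\mu),C_0(\mu),q(\mu)$ come from the moment and tail estimates, $\widetilde C$ from the union bound and the overshoot control (whence its dependence on $a_1$), and $C$ is essentially $\min\{c_1,a_2\widetilde C\}$.

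The main obstacle is the one-step lemma in the spread regime: producing correlations of the \emph{sharp} order $\sqrt{\log(eN/m)}$ — neither $\sqrt{\log N}$, which a union bound over an $\exp(O(m))$-net could not afford, nor merely a constant, which would only recover $B_\infty^m$ — uniformly over all spread directions and across all effective sparsity levels, while relying solely on the third-moment hypothesis rather than on concentration of the entrywise distributions. Managing the transition between the spread and the concentrated behaviour of a general direction is the second delicate point, and it is exactly what the iterative scheme is designed to absorb.
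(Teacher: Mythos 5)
Your reduction to a quotient-type statement and the iteration of a one-step lemma are fine in principle, but the probabilistic core of the one-step lemma --- the very step you single out as the main obstacle --- is not actually delivered by the tools you invoke, and this is a genuine gap. In the spread regime you need, per column, $\Pro\big(|\langle a_i,\widehat z\rangle|\ge c_2\sqrt{L}\big)\gtrsim m/N$ in order to get $|S|\gtrsim m$ with failure probability $\e^{-c_1 m}$. A Berry--Esseen bound has additive error of order $\mu\sum_j|\widehat z_j|^3\le \mu/\sqrt{L}$, whereas the quantity you want to lower bound is of order $(m/(\e N))^{c}$; since the theorem only assumes $N\ge\widetilde C(a_1)m$ and must cover $N\gg m$, the approximation error swamps the target, so Berry--Esseen proves nothing there. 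The ``single large entry'' mechanism also gives nothing in general: Rademacher entries satisfy all hypotheses, yet $|a_{ji}\widehat z_j|\le\|\widehat z\|_\infty\le 1/\sqrt{L}$, so no single entry can ever reach level $c_2\sqrt L$. The known way to reach the sharp $\sqrt{\log(\e N/m)}$ threshold under weak moment assumptions is not a per-column tail estimate at level $\sqrt L$ at all, but the clipped-norm duality: one works with the dual characterization of the quotient property (\Cref{lemma_FR_11_13}), compares $\|\cdot\|_*^{(\alpha)}$ with the partition norm $\|\cdot\|_{\alpha^2,\dagger}$ (\Cref{lemma_norm_alpha_star}, due to Montgomery-Smith), and asks for constant-size, same-sign deviations on each of the $\approx L$ blocks simultaneously via Paley--Zygmund; independence of the coordinates gives a lower bound $q(\mu)^{L}\gtrsim\sqrt{m/N}$, which with $N$ independent columns beats an $\e^{O(m)}$-net union bound. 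This is exactly the mechanism of Litvak et al.\ and of the present paper's generalization (\Cref{lemma_lowerbd_Q} and the proof of \Cref{theorem_QP}(a), where the net-plus-spectral-norm step is further replaced by the small-ball method). Note also that the paper does not reprove \Cref{thm_Litvak}; it cites it and proves the more general \Cref{theorem_QP} along the route just described.

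A second, related gap is created by your constructive one-step scheme itself. To close the iteration you must bound $\nu(z-Ax)$, i.e.\ both the component of $Ax$ orthogonal to $z$ (in $\ell_2/\sqrt L$) and the $\ell_\infty$ overshoot, uniformly over an $\e^{O(m)}$-net, using only third moments. The bound you cite, $\|A\|_{2\to2}\le a_1\sqrt N$, gives $\|Ax\|_2\le a_1\sqrt N\,\|x\|_2\approx a_1\sqrt{N/m}\,\|z\|_2/(c_2\sqrt L)$ for your choice of $x$, which is far larger than $\|z\|_2$ once $N\gg m$; the finer ``near-orthogonality of the selected columns'' is a statement about a column submatrix conditioned on the selection event, and the concentration it requires (likewise the $\ell_\infty$ control of the residual) is not available from a third moment alone --- the truncation-at-$N^{1/6}$ remedy is only gestured at, not carried out, and the conditioning introduces dependence you do not address. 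The dual formulation avoids all of this: it suffices to lower bound $\max_i|\langle a_i,w\rangle|$ over the sphere of the dual clipped norm, with the spectral-norm hypothesis used only to pass from a net to the whole sphere, and no preimage $x$ is ever constructed. I would restructure the argument along those lines.
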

An important instance of distributions fulfilling the assumption of the latter result are sub-Gaussian distributions. By considering symmetric $\pm 1$ random variables $a_{ji}$ (which are sub-Gaussian), it can be seen that the intersection of $\sqrt{\log{(eN/m)}} B_2^N$ with the unit cube $B_\infty^N$ in the last result is indeed necessary \cite{DeVoreWojtaszczyk09} for general sub-Gaussian distributions. In follow-up works, corresponding results have also been obtained for matrices $A$ with dependent entries, most notably the scenario where the vertices of $A B_1^N$ are drawn uniformly from a convex body \cite{Dafnis09}, \cite[Chapter 11]{GeoIsoConvex14}. Due to a close connection to log-concave measures, this model can also be seen as a version of a concentration requirement (however, 
weaker than subgaussianity).

In this paper, we establish results corresponding to \Cref{thm_Gluskin} and \Cref{thm_Litvak} for a significantly enlarged class of random matrices $A$. In particular, this class includes \emph{heavy-tailed} entry-wise distributions which do not fulfill strong concentration properties.
Following the arguments in \cite{Litvak05}, the resulting lower bounds on the inradius have implications on bounds of other geometric quantities of the corresponding random polytopes such as their volume and their mean width. 
These lower bounds on the volume of random polytopes have also been used in the context of differential privacy \cite{HardtTalwar10}.

\subsection{The quotient property in compressive sensing}
Our analysis is additionally motivated by the theory of compressive sensing, which studies the recovery of sparse vectors from incomplete linear measurements via efficient methods such as $\ell_1$-minimization \cite{FoucartRauhut13}. 
Provably optimal guarantees are available for random matrices. While previous work 
has mostly considered random matrices with entries obeying strong concentration properties such as Gaussian and subgaussian random random variables, it has recently been shown that concentration is not required for sparse
recovery guarantees. 
More precisely, Lecu{\'e} and Mendelson \cite{ML17}, see also \cite{DLR16}, showed recovery results for random matrices with 
independent, possibly heavy-tailed entries, requiring only $\log(N)$ finite moments. 
Their proof establishes the null space property via Mendelson's small ball method \cite{MendelsonLearning15,KoltchinskiiMendelson15}. 

Our work extends this line of research and yields recovery guarantees for unknown noise levels without requiring concentration on the entries of the measurement matrix. 
As observed in \cite{Wojtaszczyk10,DeVoreWojtaszczyk09}, this problem is closely connected to statements about polytope inclusions as given in \cref{eq_result_Gluskin,eq_result_Litvak}, respectively, which in this context are commonly referred to as quotient properties, see \Cref{def_QP} below for a precise definition. More precisely, our results imply stable and 
robust recovery for equality-constrained 
$\ell_1$-minimization from noisy, random measurements with heavy-tailed matrix entries without requiring an a-priori estimate of the noise level as would be needed for standard noise-aware $\ell_1$-minimization (basis pursuit denoising). 

Stated formally, we seek to recover a vector $x \in \R^N$ from noisy, underdetermined measurements
\[
y = Ax + w,
\]
where $A \in \R^{m \times N}$ with $m < N$ is the so-called measurement matrix and $w \in \R^m$ is a noise vector.
If $x$ is $s$-sparse, i.e., $\|x\|_0 := \#\{j : x_j \neq 0\}$, or approximately $s$-sparse in the sense that
\[
\sigma_s(x)_1 := \inf \{\|x-z\|_1: z\in \R^N, \|z\|_0 \leq s\}
\]
is small,
then we can hope to do so via $\ell_1$-minimization
\begin{equation}\label{l1:min}
\min_{z \in \R^N} \|z\|_1 \quad \mbox{ subject to } A z = y.
\end{equation}
In fact, if $A$ is an $m \times N$ matrix with independent standard Gaussian $\mathcal{N}(0,1)$ random variables,
\[
m \geq Cs\log(eN/s)
\]
and $y = Ax$,
then with high probability the minimizer $x^\sharp$ of \cref{l1:min} coincides with $x$ if $\|x\|_0 \leq s$
and more generally \cite{FoucartRauhut13},
\[
\| x - x^\sharp\|_1 \leq C \sigma_1(x)_s\quad \mbox{and} \quad  \| x - x^\sharp\|_2 \leq C \frac{\sigma_1(x)_s}{\sqrt{s}}.
\] 
In the noisy case $y = Ax + w$ with known noise bound $\|w\|_2 \leq \eta$, one commonly considers the constrained $\ell_1$-minimization problem
\begin{equation}\label{l1:noise}
\min_{z \in \R^N} \|z\|_1 \quad \mbox{ subject to } \quad \|Az - y \|_2 \leq \eta.
\end{equation}
For a Gaussian $m \times N$ matrix with $m \geq C s \log(eN/s)$, the minimizer $x^\sharp$ of \cref{l1:noise} satisfies
\begin{equation}\label{rec:bound:noisy}
\| x - x^\sharp\|_1 \leq C \sigma_1(x)_s + D \sqrt{s} \eta \quad \mbox{ and } \quad \|x-x^\sharp\|_2 \leq C 
\frac{\sigma_1(x)_s}{\sqrt{s}} + D \eta. 
\end{equation}
In practice, however, an accurate noise bound $\eta$ may not be known. If $\eta$ is an underestimation of the true $\|w\|_2$
then the so-called restricted isometry property or the robust null space property as used in the standard proofs \cite[Chapters 4, 6]{FoucartRauhut13} are not sufficient to guarantee the error bounds \cref{rec:bound:noisy}. If $\eta$ is
an overestimation of $\|w\|_2$ then the bounds \cref{rec:bound:noisy} may be very pessimistic as they depend on $\eta$
rather on the true noise level $\|w\|_2$ (see also Chapter~\ref{sec:numerical} for corresponding numerical experiments).

In order to address this problem, Wojtaszczyk suggested to simply 
use equality-constrained $\ell_1$-minimization \cite{Wojtaszczyk10} and provided an 
analysis for Gaussian measurement matrices $A$ based on \Cref{thm_Gluskin}, which was later adapted to subgaussian matrices \cite{DeVoreWojtaszczyk09} using \Cref{thm_Litvak} and also to Weibull matrices \cite{Foucart14}.
The resulting error bound is of the form
\[  
\| x - x^\sharp\|_1 \leq C \sigma_1(x)_s + D \sqrt{s} \triplenorm w\triplenorm \quad \mbox{ and } \quad \|x-x^\sharp\|_2 \leq C 
\frac{\sigma_1(x)_s}{\sqrt{s}} + D \triplenorm w\triplenorm,
\]
where $\triplenorm w\triplenorm$ is the Euclidean norm for Gaussian and Weibull matrices and an interpolation norm between Euclidean and supremum norm for subgaussian matrices.

Similar results have been recently established in \cite{BA18} for constrained $\ell_1$-minimization \cref{l1:noise}, where
$\eta$ is possibly underestimated, i.e., $\|w\|_2 \geq \eta$, see also below.

\subsection{Outline and contribution of this paper}
Our main contribution is twofold: Firstly, we prove a significantly generalized version of \Cref{thm_Gluskin} and the Gluskin-type inclusion \cref{eq_result_Gluskin} as compared to the ones for Gaussian \cite{Gluskin89} or Weibull \cite{Foucart14} distributions; namely, our result only requires (independent) matrix entries to be super-Gaussian (for the precise meaning of this concept, we refer to \Cref{def_SuperGaussian}), see \Cref{theorem_QP}(b) and  \Cref{cor_QP}(b). Secondly, in \Cref{theorem_QP}(a) and \Cref{cor_QP}(a), we generalize \Cref{thm_Litvak}, requiring only entrywise distributions with logarithmically many well-behaved moments. In both parts of \Cref{theorem_QP}, our results are expressed in terms of the \emph{$\ell_1$-quotient property}. All these concepts and results are introduced in detail in \Cref{subsec_QP}.

Based on \Cref{theorem_QP}, we provide, in \Cref{subsec_RobustnessGuarantees}, new robustness guarantees for noise-blind $\ell_1$-minimization for measurement matrices with quite general entrywise distributions in the regime of optimal sample complexity $m \approx C s \log(\e N /s)$ in \Cref{theorem_robustnessGuarantees}. The requirements on the entrywise distributions match the relatively weak moment assumptions of \cite{ML17} that can be shown to be almost necessary in the regime of optimal sample complexity for sparse recovery even in the noiseless case. Our result covers both the case of equality-constrained $\ell_1$-minimization (cf.~\Cref{remark_7}) and the case of quadratically constrained $\ell_1$-minimization with underestimated noise level, as studied in \cite{BA18}.

Notably, we provide a unified proof strategy for our results, which covers all previous results for matrices with independent entries, both on Gluskin-type inclusions and on the robustness of noise-blind $\ell_1$-minimization.
The proofs of our results can be found in \Cref{sec_proofs}.

In \Cref{sec:numerical}, our results are complemented by numerical experiments, confirming the robustness of noise-blind $\ell_1$-minimization for certain heavy-tailed measurement scenarios and exploring the recovery properties for different types of noise.

\subsection{Notation}
In this section, we recall some of the notation we use in this paper. 
For $N \in \N$, we write $[N] := \{k \in \N : 1 \leq k \leq N\}$.
For a vector $x \in \R^N$, 
we write $\|x\|_p = \big( \sum_{j=1}^N |x_j|^p\big)^{1/p}$, $1 \leq p < \infty$,
and $\|x\|_\infty = \sup_{j \in [N]} |x_j|$ for its $\ell_p$-norm, while for
a random variable $X$ taking values a normed vector space, 
we denote by $\|X\|_p = \big(\mathbb{E}[\|X\|^p]\big)^{1/p}$, $1 \leq p < \infty$, its \emph{$p$-th moment}.
For $1 \leq p \leq \infty$, we denote the unit ball of the $\ell_p$-ball in $\R^N$ as $B_p^N = \{ x \in \R^N : \|x\|_p \leq 1\}$.
The clipped $\ell_2$-norm with parameter $\alpha$ is defined as $\|\cdot\|^{(\alpha)}:= \max\{\|\cdot \|_2,\alpha \|\cdot\|_\infty\}$.
A Rademacher sequence $\epsilon = (\epsilon_i)_{i \in [N]}$ is a sequence of independent random variables $\epsilon_i$ taking the values $-1$ and $+1$ with equal probability.

\section{Main results} \label{sec_main_results}
We first state the results about the quotient property and its implication for the geometry of the polytope spanned by the columns of a random matrix. We distinguish two types of assumptions on the entrywise distributions its entries.
\begin{definition}\label{def_SuperGaussian} 
Let $X$ be a random variable with $\E[X] = 0$ and unit variance (so that $\|X\|_{L_2} =1$).
\begin{enumerate}
\item $X$ is called a \emph{super-Gaussian variable with parameter $\sigma > 0 $} if there exists $\sigma > 0$ such that
\begin{equation} \label{eq_assumption_supergaussian_lemma}
\Pro( |\sigma g| > t) \leq \Pro( | X | > t)
\end{equation}
for all $t > 0$, where $g$ is a standard normal random variable.
\item $X$ is said to fulfill the \emph{weak moment assumption of order $k$ with constants $\kappa_1$ and $\gamma \geq 1/2$} if
\[
\|X\|_{L_p} \leq \kappa_1 p^{\gamma} \quad \text{ for all }4 \leq p \leq k.
\]
\end{enumerate}
\end{definition}

\subsection{Quotient properties and polytope geometry} \label{subsec_QP}
The \emph{$\ell_1$-quotient property} as given in the following definition is a main object of our studies.
\begin{definition}[\cite{Foucart14,Wojtaszczyk10}] \label{def_QP}
A matrix $A \in \R^{m \times N}$ is said to possess the \emph{$\ell_1$-quotient property} with constant $d$ relative to a norm $\|\cdot\|$ on $\R^m$ if, for all $w \in \R^m$, there exists $u \in \R^N$ such that 
\[
A u = w \quad \text{ and }\quad \|u\|_{1} \leq d s_*^{1/2} \|w\|,
\]
with $s_* = m / \log(\e N/m)$.
\end{definition}

We proceed to our main theoretical result. We note that the assumption of identical distributions can be relaxed, but for simplicity we present the theorem under this assumption.

\begin{theorem} \label{theorem_QP}
Let $A = (a_{ji})$ 
be an $m \times N$ random matrix with independent symmetric, unit variance entries $a_{ji} \sim X$ for all $j \in [m]$, $i \in [N]$. 

\begin{enumerate}
\item[(a)] If $X$ fulfills the weak moment assumption of order $\max\{4,\log(m)\}$ with constants $\kappa_1$ and $\gamma \geq 1/2$, then there exist an absolute constant $c_0$ and constants $\widetilde{C}$ and $D$ such that if $m$ is large enough such that $4 \e^{4 \gamma} c_0^2 \kappa_1^2 \log(m) \leq m$, then with probability at least $1- 2 \exp(-2m)$, the matrix $\frac{1}{\sqrt{m}}A$ fulfills the $\ell_1$-quotient property with constant $D$ relative to the \emph{clipped $\ell_2$-norm} $\|\cdot\|^{(\sqrt{\log(\e N/m)})}:= \max\{\|\cdot \|_2,\sqrt{\log(\e N/m)} \|\cdot\|_\infty\}$ for 
$N \geq \max\big\{\widetilde{C}m,\log^{2\gamma -1}(m)\big\}$.
\item[(b)] If $X$ is super-Gaussian with parameter $0 < \sigma \leq 1$, there exist constants $\widetilde{C}$ and $D$ (depending on $\sigma$) such that with probability at least $1- 2 \exp(-2m)$, $\frac{1}{\sqrt{m}}A$ fulfills the $\ell_1$-quotient property with constant $D$ relative to the $\ell_2$-norm for $N \geq \widetilde{C} m$.
\end{enumerate}
\end{theorem}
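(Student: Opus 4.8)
The plan is to translate the quotient property into a single, $w$-independent lower bound on $A$, and to prove that bound from an anti-concentration estimate for linear forms in the entries together with a union bound over the sphere that survives the regime $N\asymp m$.

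\emph{Step 1: reduction to a dual bound.} I would first invoke the iteration argument of \cite{Wojtaszczyk10,Foucart14}: if for every $w\in\R^m$ there is $u\in\R^N$ with $\|u\|_1\le d'\,s_*^{1/2}\|w\|$ and $\|\tfrac1{\sqrt m}Au-w\|\le\tfrac12\|w\|$ (with $\|\cdot\|$ the relevant norm), then applying this repeatedly to the residual and summing the resulting geometric series produces a solution of $\tfrac1{\sqrt m}Au=w$ with $\|u\|_1\le 2d'\,s_*^{1/2}\|w\|$, i.e.\ the $\ell_1$-quotient property with constant $2d'$. Since this one-step property is needed for all $w$ simultaneously, it is equivalent (by the bipolar theorem) to the polytope inclusion $\tfrac1{\sqrt m}AB_1^N\supseteq c\,s_*^{-1/2}B_{\|\cdot\|}$, and, taking polars, to the dual bound
\[
\|A^{\mathsf T}\theta\|_\infty\;\ge\;c\,\sqrt{\log(\e N/m)}\;\rho(\theta)\qquad\text{for all }\theta\in\R^m,
\]
where $\rho(\theta)=\|\theta\|_2$ in case (b), while in case (a) $\rho(\theta)=\inf\{\|\theta'\|_2+\|\theta''\|_1/\sqrt{\log(\e N/m)}:\theta'+\theta''=\theta\}$, the norm dual (up to rescaling) to the clipped $\ell_2$-norm; note $\rho(\theta)\le\|\theta\|_2$ always, with substantial gain for $\theta$ concentrated on few coordinates. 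Equivalently: for every direction $\theta$ at least one column $a_i$ of $A$ must satisfy $|\langle a_i,\theta\rangle|\gtrsim\sqrt{\log(\e N/m)}\,\rho(\theta)$. Both parts of \Cref{theorem_QP} reduce to this one statement; (a) and (b) differ only through the choice of $\rho$.

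\emph{Step 2: small-ball estimate for a single direction.} Fix $\theta$ and write $Z_i=\langle a_i,\theta\rangle=\sum_j a_{ji}\theta_j$, which are i.i.d.\ in $i$, centered, with $\E Z_i^2=\|\theta\|_2^2$. Since $\Pro(\max_i|Z_i|<t)\le\exp\!\big(-N\,\Pro(|Z_1|\ge t)\big)$, everything reduces to a lower bound $\Pro(|Z_1|\ge t)\gtrsim (m/N)^{\beta}$ with $\beta<1$ at the scale $t\simeq\sqrt{\log(\e N/m)}\,\rho(\theta)$. I would split by the shape of $\theta$. If $\theta$ concentrates on one coordinate, $\theta_{j_0}^2\ge c_0\|\theta\|_2^2$, use a largest-term-dominates bound: $\Pro(|Z_1|\ge t)\ge\Pro(|\theta_{j_0}a_{j_0,1}|\ge 2t)\,\Pro(|\sum_{j\ne j_0}\theta_j a_{j,1}|\le t)$, where the second factor is $\ge\tfrac12$ by Chebyshev and the first is bounded below, in case (b), directly by the super-Gaussian hypothesis $\Pro(|X|\ge s)\ge\Pro(|\sigma g|\ge s)$, giving a Gaussian-type tail at the required scale with a $\sigma$-dependent constant; in case (a) the clipped norm makes the required $t$ here only of constant order, so a Paley--Zygmund bound using $\E X^2=1$ and the bounded fourth moment from the weak moment assumption suffices. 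The delicate case is a flat $\theta$, where one needs essentially $\Pro(|\tfrac1{\sqrt m}\sum_{j=1}^m X_j|\ge t)\gtrsim\exp(-Ct^2)$ up to $t\simeq\sqrt{\log(\e N/m)}$. Here I would compare the normalized sum with a Gaussian: in case (b), via the Hermite decomposition $\sum_j\theta_j\phi(g_j)$ of the monotone coupling $X_j=\phi(g_j)$, whose first-chaos part is a centered Gaussian of variance $c_1^2\ge\sigma^2$ and whose higher-chaos remainder is mean-zero and, for flat $\theta$, asymptotically uncorrelated with it, hence cannot destroy the lower tail; in case (a), via a truncation $X_j=X_j\mathbf 1_{|X_j|\le\tau}+X_j\mathbf 1_{|X_j|>\tau}$ at $\tau\simeq\sqrt{m/\log(\e N/m)}$, applying a two-sided Bernstein/Gaussian-comparison estimate to the bounded part (valid up to scale $\sqrt m/\tau\simeq\sqrt{\log(\e N/m)}$) and discarding the tail part, whose contribution is negligible precisely because $\max\{4,\log m\}$ moments are controlled — this is exactly what the numerical threshold $4\,\e^{4\gamma}c_0^2\kappa_1^2\log(m)\le m$ in \Cref{theorem_QP}(a) encodes.

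\emph{Step 3: uniformization.} A naive $\varepsilon$-net combined with $\|A^{\mathsf T}\|_{2\to\infty}=\max_i\|a_i\|_2\lesssim\sqrt m$ would force $\varepsilon\lesssim\sqrt{\log(\e N/m)/m}$ and hence a net of size $\exp(\Theta(m\log m))$, too large to absorb the per-direction probability when $N\asymp m$ (and in case (b) no such operator-norm bound is even available under only a variance assumption). Instead I would prove a robust version of Step 2: on a net of the sphere at a \emph{constant} scale, a Chernoff bound over the $N$ independent columns shows that a fixed positive fraction of the $\langle a_i,\theta_0\rangle$ exceed the target, with failure probability $\exp(-c\sqrt{mN})\le\exp(-c'm)$, small enough to union-bound over the $\exp(O(m))$ net points; passing to arbitrary $\theta$ uses a multiscale decomposition into net points plus finitely many correction directions whose correlations with a column are controlled by Chebyshev (variance only), with a final union bound over the still $\exp(O(m))$-many decomposition paths. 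Choosing the constant $\widetilde C$ in $N\ge\widetilde C m$ large absorbs all these exponents and yields probability $1-2\exp(-2m)$. In case (a) this is run separately on the $\ell_2$-part $\theta'$ and the $\ell_1$-part $\theta''$ of the optimal decomposition defining $\rho(\theta)$, at scales $\sqrt{\log(\e N/m)}$ and constant, respectively.

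\emph{Main obstacle.} The crux is the flat-direction anti-concentration in Step 2: showing that a normalized sum of the heavy-tailed entries retains a Gaussian-type \emph{lower} tail out to the scale $\sqrt{\log(\e N/m)}$, under only a variance assumption in case (b) and only (upper-bounded) $\log m$ moments in case (a). Super-Gaussianity is the natural hook for (b), but lifting a tail domination from single entries to weighted sums — where naive couplings fail because of cancellation — forces the Gaussian-comparison/Hermite argument above; for (a) the analogous statement must be squeezed out of a truncation tuned exactly to the $\log m$ available moments. Once this one-direction estimate is in hand, the remaining ingredients (the reduction and iteration of Step 1, and the Chernoff-plus-multiscale bookkeeping of Step 3, following the template of \cite{Litvak05}) are routine, although matching the peaky and flat regimes and handling the clipped norm in case (a) needs some care with constants.
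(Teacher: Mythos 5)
Your high-level skeleton (dualize the quotient property to a uniform lower bound on $\max_i|\langle a_i,\theta\rangle|$ over the dual sphere, prove a per-direction anti-concentration estimate, then uniformize) matches the paper's strategy, but the two steps that carry all the difficulty are not actually proved, and as sketched they would not go through. First, your Step 2 dichotomy ``$\theta$ concentrated on one coordinate'' versus ``$\theta$ flat'' is not exhaustive: the generic direction has its mass spread over $k$ coordinates with $1\ll k\ll m$, and the required bound must hold \emph{uniformly} over all shapes at the scale dictated by the dual (clipped) norm. The paper's whole point is how to treat all shapes at once: in case (b) it conditions on the moduli $|b_i|$, applies Montgomery-Smith's two-sided bounds for Rademacher sums measured in the dual clipped norm (\Cref{lemma_MontgomerySmith}), uses the pointwise coupling $|b_i|\geq\sigma|g_i|$ and a contradiction argument against the Gaussian case to get $\Pro(|\langle b,w\rangle|>u)\gtrsim \e^{-c u^2/\sigma^2}$ for \emph{every} unit vector $w$ (\Cref{lemma_lowerbound_heavytailed}); in case (a) it compares the dual clipped norm with the partition norm of \Cref{lemma_norm_alpha_star}, splits $[m]$ into $\lfloor\beta^2\rfloor\approx\log(\e N/m)/\log(\e C)$ blocks, and multiplies constant-scale Paley--Zygmund bounds across independent blocks (\Cref{lemma_lowerbd_Q}), so the $(m/N)^{1/2}$ decay is manufactured from fourth-moment information alone and no moderate-deviation lower bound for a single sum is ever needed. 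Even restricted to the flat case, your case (b) argument is not a proof: orthogonality of the higher Hermite chaos to the linear part gives uncorrelatedness, not independence, and uncorrelatedness does not prevent the remainder from cancelling the linear part precisely on the event where the latter is large; quantifying this is exactly the obstacle you name, and ``asymptotically uncorrelated, hence cannot destroy the lower tail'' does not resolve it. (Your variance claim $\E[\phi(g)g]\geq\sigma$ is fine, but it is the remainder control that is missing.)

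Second, your Step 3 uniformization is a sketch of a genuinely hard argument rather than a proof: passing from a constant-scale net to all of the sphere without any operator-norm bound (which, as you note, is unavailable under a variance-only assumption) via a ``multiscale decomposition into net points plus finitely many correction directions'' with Chebyshev control of the corrections is precisely the kind of step that needs a full construction, and none is given. The paper avoids nets altogether: it feeds the per-direction small-ball bound and a Rademacher-complexity bound into Mendelson's small-ball method (\Cref{theoremlemma31}), where the entire cost of uniformity over the dual sphere is the term $\mathcal{R}_N(S)=\E\big\|\frac1N\sum_i\epsilon_i b_i\big\|^{(\alpha)}$, which is bounded by an elementary second-moment computation for the $\ell_2$-part and a $\log(m)$-moment estimate for the $\ell_\infty$-part (\Cref{lemma_upperbd_R}); the probability $1-2\exp(-2m)$ then comes directly from the choice $t=\sqrt m$ in that lemma. (A minor point: the reduction in your Step 1 does not need the Wojtaszczyk--Foucart iteration; the exact dual characterization in \Cref{lemma_FR_11_13} already gives the equivalence.) So the architecture is right, but the two central ingredients --- a shape-uniform anti-concentration estimate and a net-free uniformization --- are missing, and these are exactly what \Cref{lemma_lowerbound_heavytailed}, \Cref{lemma_lowerbd_Q} and \Cref{theoremlemma31} supply in the paper.
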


\begin{remark}
\begin{enumerate}
\item
In the first statement of Theorem \ref{theorem_QP}, the constants $D$ and $\widetilde{C}$ depend on $\kappa_1$ and $\gamma$. In particular, $D$ can be chosen as 
\[
D = 8 \e^{-1} \sqrt{1+(9+8 \gamma) \log(4) + 8 \log(\kappa_1)} \lesssim \sqrt{\gamma} + \sqrt{\log(\kappa_1)}
\] 
and $\widetilde{C}$ as $\widetilde{C} = \frac{4^{5+8 \gamma}}{3^4 \e} \kappa_1^8$. 
\item The proof given of the second statement works for the constants $\widetilde{C} = 64^2 c_1^{-2} \e^{\frac{c_2}{16 \sigma^2}}$, $c_1^{-1}=10$, $c_2=5220$ and 
\[
D =8 \e^{1/2} \sqrt{1+ 2 \log(64)+ 2 \log(c_1^{-1}) + \frac{c_2}{16 \sigma^2}} \lesssim  1+\sigma^{-1}, 
\]
which depend on the super-Gaussian parameter $\sigma$.
\end{enumerate}
We note that in both cases, it was not our objective to find the best possible constants $\widetilde{C}$ and $D$. By considering the case of $c m < N < \widetilde{C} m$ with much smaller $c$ than $\widetilde{C}$ separately and analyzing the smallest singular value of $B$, the range of validity of the theorem can be extended considerably, cf. also \cite[Theorem 11.19]{FoucartRauhut13}. For lower bounding the least singular values under the present random models, results as in \cite{KoltchinskiiMendelson15} are useful tools.
\end{remark}

As mentioned before, the $\ell_1$-quotient property is closely linked to the geometry of $A B_1^N$, which is the polytope defined by the absolute convex hull of the columns of $A$. We obtain the following corollary by rewriting the definition of the $\ell_1$-quotient property, see, e.g., \cite[Chapter 11]{FoucartRauhut13}.
\begin{corollary} \label{cor_QP}
Let $A = (a_{ji})$ 
be an $m \times N$ random matrix with independent symmetric, unit variance entries $a_{ji} \sim X$ for all $j \in [m]$, $i \in [N]$. 
\begin{enumerate}
\item[(a)] If $X$ fulfills the weak moment assumption of order $\max\{4,\log(m)\}$ with constants $\kappa_1$ and $\gamma \geq 1/2$, then there exist an absolute constant $c_0$ and constants $\widetilde{C}$ and $D$ such that if $m$ is large enough such that $4 \e^{4 \gamma} c_0^2 \kappa_1^2 \log(m) \leq m$ and if $N \geq \max\big\{\widetilde{C}m,\log^{2\gamma -1}(m)\big\}$,
\[
\Pro \Big( A B_1^N \supset \frac{1}{D} \big(\sqrt{\log{(eN/m)}} B_2^N \cap B_\infty^N \big) \Big) \geq 1 - 2 \exp(-2 m).
\]
\item[(b)] If $X$ is super-Gaussian with parameter $0 < \sigma \leq 1$, there exist constants $\widetilde{C}$ and $D$ (depending on $\sigma$) such that for every $m,N$ satisfying $N \geq \widetilde{C} m$, one has
\[
\Pro \Big( A B_1^N \supset \frac{1}{D} \sqrt{\log{(eN/m)}} B_2^N \Big) \geq 1 - 2 \exp( -2 m).
\]
\end{enumerate}
\end{corollary}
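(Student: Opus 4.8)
The plan is to read off both inclusions from \Cref{theorem_QP} by simply unwinding \Cref{def_QP}, as the paper indicates. Set $B := \tfrac{1}{\sqrt{m}} A$, so that $A B_1^N = \sqrt{m}\, B B_1^N$ with $B B_1^N = \{Bu : u \in \R^N,\ \|u\|_1 \le 1\}$. Work on the event furnished by \Cref{theorem_QP}, on which $B$ enjoys the $\ell_1$-quotient property with constant $D$ relative to the relevant norm $\|\cdot\|$ on $\R^m$ — the clipped $\ell_2$-norm $\|\cdot\|^{(\sqrt{\log(\e N/m)})}$ in case (a), and $\|\cdot\|_2$ in case (b). Denote by $K := \{w \in \R^m : \|w\| \le 1\}$ the unit ball of that norm. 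Then for every $w \in K$ there is $u \in \R^N$ with $Bu = w$ and $\|u\|_1 \le D s_*^{1/2}$, where $s_* = m/\log(\e N/m)$; equivalently $w \in D s_*^{1/2}\, B B_1^N$. Hence $K \subset D s_*^{1/2}\, B B_1^N$, i.e.\ $B B_1^N \supset (D s_*^{1/2})^{-1} K$.

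Multiplying through by $\sqrt{m}$ and using $s_*^{1/2} = \sqrt{m}/\sqrt{\log(\e N/m)}$, so that $\sqrt{m}/s_*^{1/2} = \sqrt{\log(\e N/m)}$, we obtain $A B_1^N \supset \tfrac{1}{D}\sqrt{\log(\e N/m)}\,K$. In case (b), $K = B_2^m$, which is exactly the asserted inclusion. In case (a), $K$ is the unit ball of $\max\{\|\cdot\|_2,\sqrt{\log(\e N/m)}\,\|\cdot\|_\infty\}$, i.e.\ $K = B_2^m \cap \tfrac{1}{\sqrt{\log(\e N/m)}} B_\infty^m$; since positive scaling distributes over intersection, $\sqrt{\log(\e N/m)}\,K = \sqrt{\log(\e N/m)}\,B_2^m \cap B_\infty^m$, yielding the claimed set. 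Finally, the probability bound $1 - 2\exp(-2m)$ and the hypotheses on $m$ and $N$ are inherited verbatim from the corresponding part of \Cref{theorem_QP}.

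I do not expect any genuine obstacle here: the corollary is a restatement of \Cref{theorem_QP} via the standard equivalence between the $\ell_1$-quotient property relative to a norm $\|\cdot\|$ and the inclusion of a dilated copy of its unit ball in $A B_1^N$ (cf.\ \cite[Chapter 11]{FoucartRauhut13}). The only points requiring a little care are bookkeeping the two scaling factors $\sqrt{m}$ (from passing between $A$ and $B$) and $s_*^{1/2}$ (from the definition of the quotient property), and correctly identifying the unit ball of the clipped $\ell_2$-norm in part (a).
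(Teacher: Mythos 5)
Your proposal is correct and is exactly the route the paper takes (and only sketches): unwinding \Cref{def_QP} for $B=\tfrac{1}{\sqrt m}A$ to get $B B_1^N \supset (D s_*^{1/2})^{-1}K$, rescaling by $\sqrt m$ so that $\sqrt m / s_*^{1/2} = \sqrt{\log(\e N/m)}$, and identifying the unit ball of the clipped norm as $B_2^m \cap \tfrac{1}{\sqrt{\log(\e N/m)}}B_\infty^m$ in case (a). The probability bound and the conditions on $m,N$ are indeed inherited directly from \Cref{theorem_QP}, so nothing further is needed.
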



\subsection{Robustness of noise-blind compressed sensing} \label{subsec_RobustnessGuarantees}
We will use \Cref{theorem_QP} to study the the robustness of the reconstruction map $\Delta_{1}$ given by equality-constrained 
$\ell_1$-minimization
\begin{equation} \label{def_ell1_constrained}
\Delta_{1}(y) := \argmin\limits_{z \in \R^N} \|z\|_{1} \text{ subject to } Az = y,
\end{equation}
where $A \in \R^{m \times N}$ for $m <N$ and when noise on the measurements $A x$ of a sparse or approximately sparse vector $x \in \R^N$ is present, i.e., if $y = Ax + w$ with some arbitrary $w \in \R^m$. Our goal is to quantify, for $1 \leq p \leq 2$, the $\ell_p$-error $\|\Delta_{1}(y) - x\|_{\ell_p}$ of the reconstruction map $\Delta_{1}(y)$ to $x$. We call the decoder $\Delta_1(y)$ \emph{noise-blind} since it does not use any information about the noise $w$.

Furthermore, a more canonical reconstruction algorithm in case of noisy observations ($w \neq 0$) is the convex program called \emph{quadratically constrained $\ell_1$-minimization}
\begin{equation} \label{def_ell1_QC}
\Delta_{1,\eta}(y) := \argmin\limits_{z \in \R^N} \|z\|_{1} \text{ subject to } \|Az- y\|_2 \leq \eta
\end{equation}
for some $\eta > 0$. The parameter $\eta$ can be chosen in a \emph{noise-aware} manner such that $\|w\|_2 \leq \eta$, using oracle information about the $\ell_2$-norm of the noise $w$, and error bounds such as \cref{rec:bound:noisy} have been shown by using a restricted isometry property or robust null space property of $A$ in this case, without the need of using quotient properties.

In the next theorem, we derive error bounds for $\Delta_1(y)$ and for $\Delta_{1,\eta}(y)$ in the case of underestimated  noise level such that $\eta < \|w\|_2$. The latter was first studied in \cite{BA18}. The theorem provides robustness results for measurement matrices drawn from a wide range of i.i.d. entrywise distributions with high probability.

%
%
\begin{theorem} \label{theorem_robustnessGuarantees}
Let $m \leq N$, let $B = (b_{ji})$ 
be an $m \times N$ random matrix with independent symmetric, unit variance entries $b_{ji} \sim X$ for all $j \in [m]$, $i \in [N]$ and $A := \frac{1}{\sqrt{m}}B$. For $s \in \N$, let $\sigma_s(x)_1 := \inf \{\|x-z\|_1: z\in \R^N, \|z\|_0 \leq s\}
$ be the $\ell_1$-error of the best $s$-term approximation of $x \in \R^m$. Assume $\eta \geq 0$.
\begin{enumerate}
\item[(a)] Assume that $X$ fulfills the weak moment assumption of order $\max\{4,\log(N)\}$ with constants $\kappa_1$ and $\gamma \geq \frac{1}{2}$. Then there exist constants $\widetilde{c}_1, \widetilde{c}_2, \widetilde{c}_3, \widetilde{C}, C, D, E >0$ depending only on $\kappa_1$ and $\gamma$ such that if
\[
N \geq \widetilde{C} m, \quad m \geq \widetilde{c}_3 \log^{\max\{2\gamma -1,1\}} (N) \quad \text{ and } s \leq \widetilde{c}_2 s_* := \widetilde{c}_2 \frac{m}{\log(\e N/m)},
\] 
with probability at least $1 - 3 \exp(-\widetilde{c_1} m)$, the solution of the $\ell_1$-minimization decoder $\Delta_{1,\eta}$ given the measurement matrix $A$ and data vector $y= Ax+w$  fulfills the $\ell_p$-error estimates
\[
\begin{split}
&\| x - \Delta_{1,\eta}(Ax + w)\|_{p} \\
\leq &\frac{C}{s^{1-1/p}} \sigma_s (x)_1+ s_*^{1/p-1/2} \Big(D \eta + E \frac{\|w\|^{(\sqrt{\log{\e N/m}})}}{\|w\|_2} \max\{\|w\|_2 - \eta,0\}\Big)
\end{split}
\] 
for $1 \leq p \leq 2$, for all $x \in \R^{N}$ and all $w \in \R^m$, where we recall that \\ $\|\cdot\|^{(\sqrt{\log(\e N/m)})}:= \max\{\|\cdot \|_2,\sqrt{\log(\e N/m)} \|\cdot\|_\infty\}$.
\item[(b)] If $X$ fulfills the weak moment assumption of order $\max\{4,\log(N)\}$ with constants $\kappa_1$ and $\gamma \geq \frac{1}{2}$ and if $X$ is additionally super-Gaussian with parameter $0 \leq \sigma \leq 1$, then there exist constants $\widetilde{c}_1, \widetilde{c}_2, \widetilde{C}, C, D, E >0$ depending only on $\kappa_1$, $\gamma$ and $\sigma$ such that if
\[
N \geq \widetilde{C} m, \quad m \geq \log^{\max\{2\gamma -1,1\}} (N) \quad \text{ and } s \leq \widetilde{c}_2 s_* := \widetilde{c}_2 \frac{m}{\log(\e N/m)},
\]
with probability at least $1 - 3 \exp(-\widetilde{c_1} m)$, the solution of the equality-constrained $\ell_1$-minimization problem $\Delta_{1}(Ax + w)$ fulfills the $\ell_p$-error estimates
\[
\| x - \Delta_{1,\eta}(Ax + w)\|_{p} \leq \frac{C}{s^{1-1/p}} \sigma_s (x)_1+ s_*^{1/p-1/2} \big(D \eta + E \max\{\|w\|_2 -\eta,0\}\big)
\]
for $1 \leq p \leq 2$, and for all $x \in \R^{N}$, $w \in \R^m$.
\end{enumerate}
\end{theorem}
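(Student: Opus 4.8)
The plan is to derive \Cref{theorem_robustnessGuarantees} from \Cref{theorem_QP} by combining the $\ell_1$-quotient property with a robust null space property and then running a Wojtaszczyk-type comparison argument, suitably adapted to handle the possibly underestimated noise level $\eta<\|w\|_2$ as in \cite{BA18}.

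First I would set up the two high-probability events. \Cref{theorem_QP} gives, with probability at least $1-2\exp(-2m)$, the $\ell_1$-quotient property for $A=\tfrac{1}{\sqrt m}B$ with some constant $D_0$: relative to the clipped $\ell_2$-norm $\|\cdot\|^{(\sqrt{\log(\e N/m)})}$ in case (a), and --- using the extra super-Gaussian hypothesis --- relative to $\|\cdot\|_2$ in case (b). On the other hand, under the weak moment assumption of order $\max\{4,\log N\}$ one has, with probability at least $1-\exp(-\widetilde c_1 m)$ and provided $s\le\widetilde c_2 s_*$ and $m\gtrsim\log^{\max\{2\gamma-1,1\}}N$, the $\ell_2$-robust null space property of order $s$ relative to $\|\cdot\|_2$ with constants $0<\rho<1$ and $\tau>0$; this is exactly the type of statement established for heavy-tailed i.i.d.\ matrices in \cite{ML17} (see also \cite{DLR16}) via Mendelson's small-ball method \cite{MendelsonLearning15,KoltchinskiiMendelson15}. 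Since the clipped norm dominates $\|\cdot\|_2$, this robust null space property holds a fortiori relative to $\|\cdot\|^{(\sqrt{\log(\e N/m)})}$ as well. A union bound places us, with probability at least $1-3\exp(-\widetilde c_1 m)$, on the intersection of the two events, and everything afterwards is deterministic on this event.

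On this event I would first upgrade the quotient property to a \emph{simultaneous} $\ell_1$--$\ell_2$ bound: for every $e\in\R^m$ there is $u$ with $Au=e$, $\|u\|_1\le D_0 s_*^{1/2}\|e\|$ \emph{and} $\|u\|_2\le D_1\|e\|$ (with $\|\cdot\|$ the relevant norm). This follows by taking $u$ from the quotient property, splitting it at the index set $S$ of its $\lceil s_*\rceil$ largest entries, estimating the tail by $\|u_{\bar S}\|_2\le\|u\|_1/\sqrt{s_*}$ and the head by the robust null space property, $\|u_S\|_2\le\tfrac{\rho}{\sqrt{s_*}}\|u_{\bar S}\|_1+\tau\|Au\|$; interpolation then yields $\|u\|_p\le D_2 s_*^{1/p-1/2}\|e\|$ for $1\le p\le2$ (cf.\ the analogous manipulations in \cite[Chapter 11]{FoucartRauhut13}). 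Next, given $x$, $w$ and $\eta\ge0$, I would split $w=w'+w''$ with $w''=(1-\eta/\|w\|_2)_+\,w$, so that $\|w'\|_2\le\eta$ and $\|w''\|=\tfrac{\|w\|}{\|w\|_2}\max\{\|w\|_2-\eta,0\}$ for either choice of norm. Applying the simultaneous quotient bound to $e=w''$ produces $u$ with $Au=w''$, so that $x+u$ is feasible for $\Delta_{1,\eta}(Ax+w)$ (because $\|A(x+u)-(Ax+w)\|_2=\|w'\|_2\le\eta$); hence $\|\Delta_{1,\eta}(Ax+w)\|_1\le\|x+u\|_1$ and $v:=\Delta_{1,\eta}(Ax+w)-(x+u)$ obeys $\|Av\|_2\le2\eta$. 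The standard null space bookkeeping (taking $S$ the best $s$-term support of $x$ and using $\|(x+u)_{\bar S}\|_1\le\sigma_s(x)_1+\|u\|_1$) together with the robust null space property applied to $v$ gives $\|v\|_p\lesssim s^{-(1-1/p)}(\sigma_s(x)_1+\|u\|_1)+s^{1/p-1/2}\eta$, and then $\|x-\Delta_{1,\eta}(Ax+w)\|_p\le\|v\|_p+\|u\|_p$. Inserting $\|u\|_1\le D_0 s_*^{1/2}\|w''\|$, $\|u\|_p\le D_2 s_*^{1/p-1/2}\|w''\|$ and $s\le\widetilde c_2 s_*$ collapses everything to the claimed estimate, with $\|w''\|$ read off as $\|w''\|^{(\sqrt{\log\e N/m})}$ in case (a) and as $\|w''\|_2$ in case (b); the choice $\eta=0$ recovers the noise-blind equality-constrained bound.

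The probabilistic content being essentially in hand (\Cref{theorem_QP} plus the cited robust null space property), the real work is the deterministic assembly, and its delicate point is keeping the two norms consistent: the robust null space property must be invoked with $\|Av\|_2$, where we genuinely control $\|Av\|_2\le2\eta$, while the quotient property is invoked with the clipped norm; checking that the upgrade to the simultaneous bound and the noise split are compatible with both, and that the juggling between $s$ and $s_*$ produces exactly the stated $s_*^{1/p-1/2}$ scaling, is where care is required. A secondary point is verifying that the moment order $\log N$ used here (rather than $\log m$ as in \Cref{theorem_QP}) is precisely what the null space property step of \cite{ML17} demands, which is the reason for the slightly strengthened hypothesis.
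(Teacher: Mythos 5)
Your proposal is correct and takes essentially the same route as the paper: the $\ell_2$-robust null space property from \cite{DLR16,ML17}, the quotient property from \Cref{theorem_QP}, a union bound, and the same noise rescaling (your $w''=(1-\eta/\|w\|_2)_+\,w$ is exactly the paper's $z=(1-\eta/\|w\|_2)\tilde z$ with $A\tilde z=w$), together with the NSP-plus-Stechkin estimate that upgrades the $\ell_1$ quotient bound to an $\ell_p$ bound on the quotient element. The only difference is organizational: the paper delegates the deterministic comparison to \cite[Theorem II.22]{BA18} in part (b) and to \cite[Theorem 4.22]{FoucartRauhut13} (applied to the shifted signal $x+z$) in part (a), whereas you redo that comparison directly via the feasible point $x+u$; this is the same argument, provided you take the head/tail split at the NSP order $s=\widetilde c_2 s_*$ rather than at $s_*$, which only changes constants.
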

\begin{remark} \label{remark_7}
We point out that robust recovery guarantees of \Cref{theorem_robustnessGuarantees} can be specified for the noise-blind \emph{equality-constrained} $\ell_1$-minimization 
\[
\Delta_{1,0}(y) = \Delta_{1}(y) = \argmin\limits_{z \in \R^N} \|z\|_{1} \text{ subject to } Az = y
\]
such that for all $1 \leq p \leq 2$,
\[
\| x - \Delta_{1}(Ax + w)\|_{p} \leq \frac{C}{s^{1-1/p}} \sigma_s (x)_1+ s_*^{1/p-1/2} E \|w\|^{(\sqrt{\log{\e N/m}})}
\]
and 
\[
\| x - \Delta_{1,\eta}(Ax + w)\|_{p} \leq \frac{C}{s^{1-1/p}} \sigma_s (x)_1+ s_*^{1/p-1/2}E \|w\|_2
\]
for all $x\in \R^N$ and $w \in \R^m$ in the first and second part of the theorem, respectively.

This means that the existing robust recovery guarantees for this decoder using matrices with i.i.d.\ sub-Gaussian \cite[Theorem 11.10]{FoucartRauhut13}, Gaussian \cite[Theorem 11.9]{FoucartRauhut13} and Weibull \cite[Theorem 11]{Foucart14} random variables can be considered as special cases of the theorem.
%
%
\end{remark}

To show Theorem \ref{theorem_robustnessGuarantees}, we combine existing results about the \emph{robust null space property} of matrices with i.i.d. entries drawn from distributions fulfilling weak moment assumptions \cite{DLR16, ML17} together with \Cref{theorem_QP}.

Next, we illustrate the generality of the assumptions of Theorems \ref{theorem_robustnessGuarantees} and \ref{theorem_QP} by enumerating random models which are covered by our theorem, but which mostly have not been covered by the robustness analyses of \cite{Litvak05,Wojtaszczyk10,Foucart14}.
\begin{example} \label{examples_1}
Let $A$ be a real random matrix with i.i.d. entries $A_{ji}= \frac{1}{\sqrt{m}}  \frac{B_{ji}}{\|B_{ji}\|_{L_2}}$, $j \in [m]$, $i \in [N]$.
\begin{enumerate}[label=(\roman*)]
\item Assume that the $B_{ji}$ are distributed as $X_\gamma$, where $X_\gamma$ is a $\psi_{\frac{1}{\gamma}}$-random variable with the same distribution as $\sign(g)|g|^{2 \gamma}$, where $g$ is a standard normal variable and $\gamma > 0$. Then, $X_\gamma$ is of exponential type, i.e., has a probability density function of $p(x) = c_1 \e^{-\frac{|x|^\frac{1}{\gamma}}{c_2}}$. If $1/2 \leq \gamma \leq 1$, the assumptions of the second part of Theorem \ref{theorem_robustnessGuarantees} apply, cf. \cite[Example V.4]{DLR16}. In particular, the $B_{ji}/\|B_{ji}\|_{L_2}$ are super-Gaussian with a parameter $\sigma \geq \frac{1}{2}$. We note that the special cases $\gamma = \frac{1}{2}$ and $\gamma = 1$ have been covered already by the existing theory, in the latter case by \cite{Foucart14}, but not for $1/2 < \gamma < 1$.

For $\gamma > 1$, the theorem still applies as $X_\gamma$ is super-Gaussian with parameter $\sigma \geq \frac{1}{2}$, but with a worse upper bound $\widetilde{c}_2(\gamma) \frac{m}{\log(\e N/m)}$ on the sparsity, where $\widetilde{c}_2(\gamma)$ depends on $\gamma$, cf. \cite[Example V.4]{DLR16}. \label{example_distributions_1}

\item Let $d \in \N$. If the $B_{ji}$ are distributed as Student-$t$ variables $X_d$ with $d$ degrees of freedom, they are (after normalization) super-Gaussian with some parameter $\sigma \geq \frac{1}{2}$ as well, so that Theorem \ref{theorem_QP}(b) applies. If additionally $d \geq 2 \log(N)$, then also Theorem \ref{theorem_robustnessGuarantees}.2 applies.
\item If the $B_{ji}$ are distributed as a symmetric Weibull variable $X_r$ with exponent $1 \leq r \leq 2$, recovery guarantees or equality-constrained $\ell_1$-minimization that are robust relative to the $\ell_2$-norm have been shown in the optimal regime of $m$ already in \cite{Foucart14}. Since the normalized symmetric Weibull variables $B_{ji}/\|B_{ji}\|_{L_2}$ are super-Gaussian with parameter $\sigma \geq \frac{1}{2}$ for all $1 \leq r \leq 2$, Theorem \ref{theorem_QP}(b) applies also here.
\end{enumerate}
\end{example}

Interestingly, comparing the two parts of \Cref{theorem_robustnessGuarantees}, we see that our analysis suggests that the robustness properties of equality-constrained $\ell_1$-minimization $\Delta_{1}$ with measurement matrices $A$ with entries drawn from many super-Gaussian distributions are \emph{asymptotically better} than the ones of measurement matrices whose entries are drawn from certain sub-Gaussian, bounded distributions as the Rademacher distribution with random signs.

\section{Numerical experiments}
\label{sec:numerical}
In this section, we show in a case study that the results of \Cref{theorem_robustnessGuarantees} give an appropriate explanation of the empirical robustness behavior of different measurement matrices. In particular, we consider three types of measurement matrices: random matrices with i.i.d. Gaussian, Bernoulli and Student-t entries. The presented numerical experiments have been conducted using MATLAB R2017b on a MacBook Pro with a 2.3 GHz Intel Core i5 processor. The convex optimization problems of our experiments are solved using the CVX package \cite{cvx}.

\subsection{Behavior under spherical noise} \label{subsec_SphericalNoise}
\begin{figure}[t]
\includegraphics[width=1\textwidth]{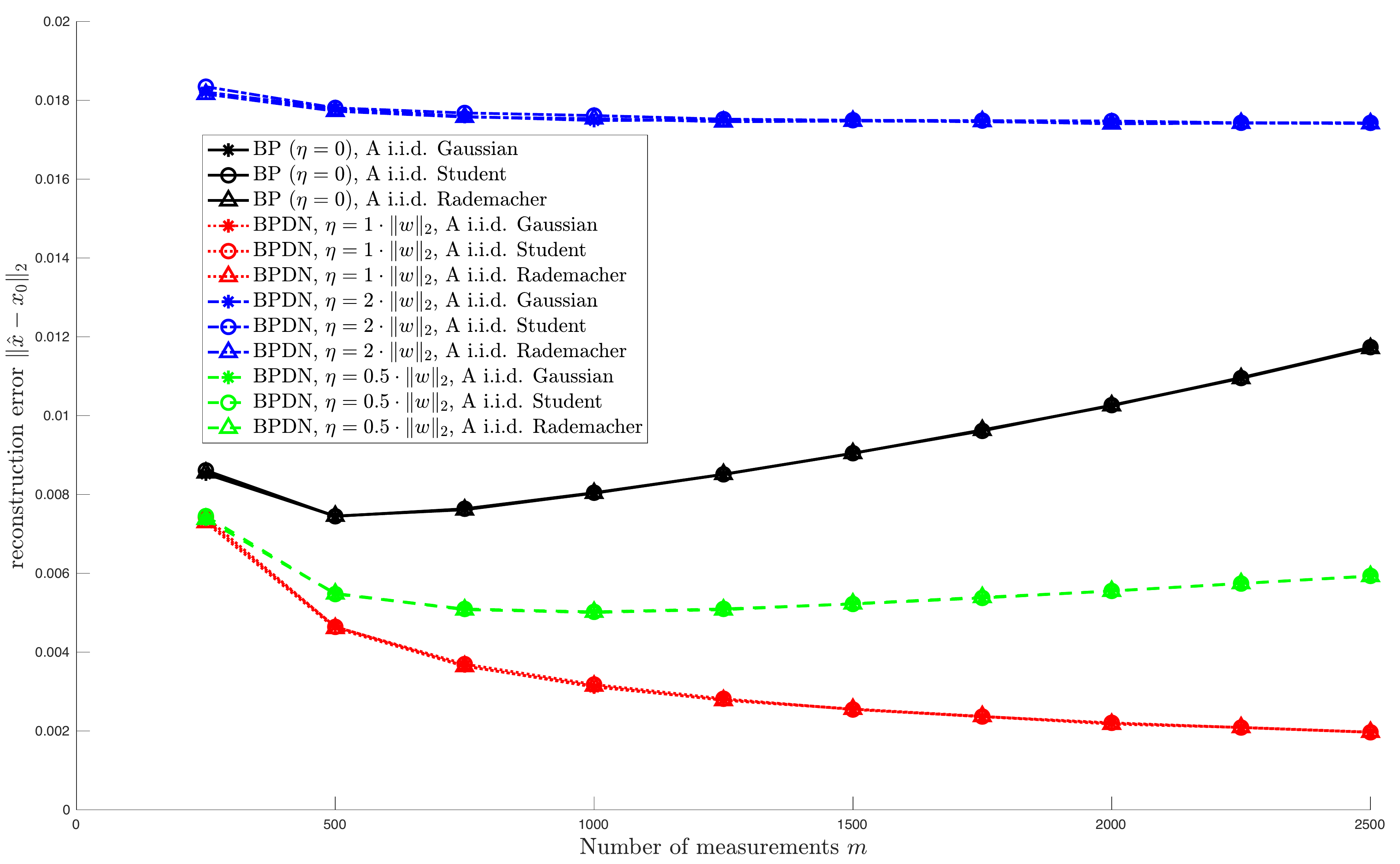}
\caption{y-axis: Reconstruction errors $\|\widehat{x}-x_0\|_{2}$ from measurements $y=Ax_0 + w$, $w$ is a random spherical noise vector with $\|w\|_{2}= 10^{-2}$, for different measurement matrices $A$. \\
x-axis: number of rows of $m$.} \label{figure_GaussNoise}
\end{figure}
In our first experiment, we perform simulations for the reconstruction of a $s$-sparse vector $x_0 \in \R^N$ with $\|x_0\|_2 = 1$ from measurements $y= Ax_0 + w$ which are perturbed by a random vector $w \in \R^m$ that is drawn from the uniform distribution on the sphere of radius $\|w\|_2 = 10^{-2}$. To obtain our reconstruction result $\widehat{x}$, we use equality-constrained $\ell_1$-minimization \cref{def_ell1_constrained} as defined by $\Delta_{1}(y)$ and quadratically constrained $\ell_1$-minimization \cref{def_ell1_QC}
\begin{equation*}
\Delta_{1,\eta}(y) := \argmin\limits_{z \in \R^N} \|z\|_{1} \text{ subject to } \|Az - y\|_2 \leq \eta,
\end{equation*}
where the noise level estimate $\eta$ is chosen such that $\eta \in \{\|w\|_2, 2 \|w\|_2, 0.5 \|w\|_2\}$, i.e., the noise level $\|w\|_2$ is either estimated accurately or over- or underestimated by a factor of two.
The support $S$ of $x_0$ is drawn uniformly among the $\binom{N}{s}$ possibilities, and the non-zero coordinates are drawn uniformly on the sphere $\mathcal{S}^{S-1} = \{ x \in \R^N: \|x\|_2=1, \supp(x) \subset S\}$.

In \Cref{figure_GaussNoise}, the resulting recovery $\ell_2$-errors $\|\widehat{x}-x_0\|_2$ can be observed for the three different random models (in case of Student-t measurements, $k=9$ degrees of freedoms were used) for the measurement matrix $A$ mentioned above, where the parameters were chosen as $N=5000$, $s= 10$ and $m \in \{\lceil k N/20 \rceil, k=1,\ldots,14\}$. The reported errors are averaged over $500$ runs of the simulation.

We notice that in the experiment, the recovery error of the equality-constrained algorithm \cref{def_ell1_constrained} is comparable to the one of quadratically constrained $\ell_1$-minimization \cref{def_ell1_QC} with correctly estimated or underestimated noise level $\eta \in \{\|w\|_2, 0.5 \|w\|_2\}$, if $A$ has a small number of rows $m \leq 500$. For larger $m$, the robustness of \cref{def_ell1_QC} improves further if $\eta = \|w\|_2$, whereas it stagnates for underestimated noise of $\eta = 0.5 \|w\|_2$ and it deteriorates slightly for \cref{def_ell1_constrained}.

It can be also observed that an overestimation of the noise level such that $\eta = 2 \|w\|_2$ in \cref{def_ell1_QC} leads to a significantly worse reconstruction error $\|\widehat{x}-x_0\|_{2}$ than for all the other methods, for all the considered number of measurements $m$. 

Importantly, we observe that the robustness behavior of the algorithms does not depend on the choice of Gaussian, Bernoulli or Student-t measurement matrices in this case of presence of spherical noise.

This is precisely in accordance to the result of \Cref{theorem_robustnessGuarantees}: Bernoulli variables $X$ fulfill the assumptions of the first part of the theorem, but not of the second part, since they are sub-Gaussian. On the other hand, Gaussian and Student-t variables (with a sufficient number of degrees of freedom) fulfill the assumptions for the stronger statement of \Cref{theorem_robustnessGuarantees}.2. In general, Bernoulli measurement matrices entail the weaker statement predicting a reconstruction error of 
\[
\| x - \Delta_{1}(y)\|_{2} \leq D \|w\|^{(\sqrt{\log{\e N/m}})}
\]
with a constant $D$ for equality-constrained $\ell_1$-minimization. For spherical noise, though, this coincides with the statement of \Cref{theorem_robustnessGuarantees}.2, since $\|w\|^{(\sqrt{\log{\e N/m}})}= \|w\|_2$ with high probability under this noise model.

\subsection{Behavior under heavy-tailed noise}
\begin{figure}[t]
\includegraphics[width=1\textwidth]{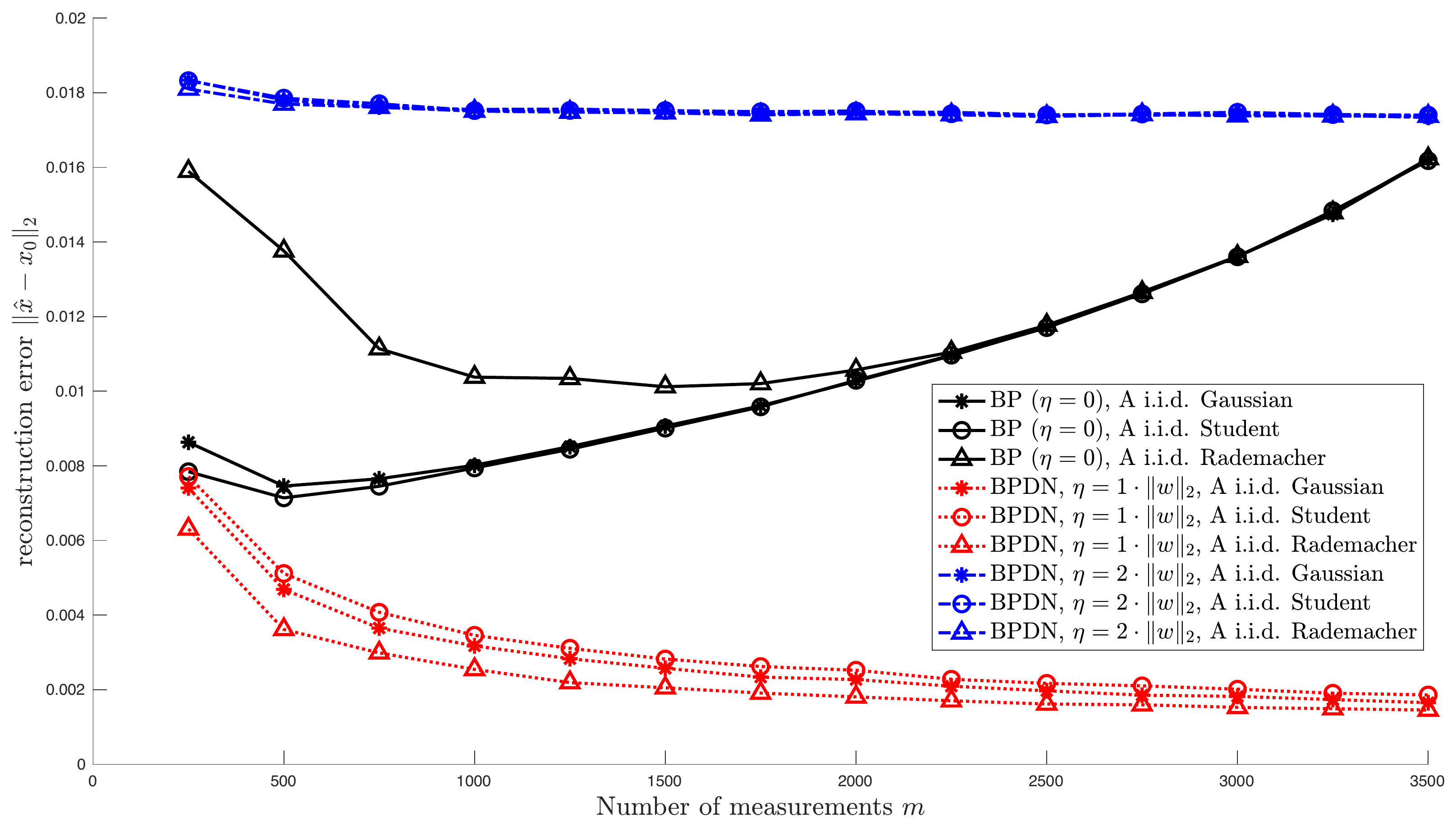}
\caption{y-axis: Reconstruction errors $\|\widehat{x}-x_0\|_{2}$ from measurements $y=Ax_0 + w$, $w= 10^{-2} \cdot \frac{\widetilde{w}}{\|\widetilde{w}\|_2}$ where the entries of $\widetilde{w}$ are i.i.d. $\psi_{0.2}$ random variables, for different measurement matrices $A$. \\
x-axis: number of rows of $m$.} \label{figure_SparseNoise}
\end{figure}

Next, instead of uniform spherical noise, we consider more heavy-tailed noise such that $w = 10^{-2} \frac{\widetilde{w}}{\|\widetilde{w}\|_2} \in \R^m$, where $(\widetilde{w})_i$ are i.i.d. $\psi_{\alpha}$ random variables for the parameter $\alpha = 0.2$, cf. also \Cref{examples_1}.\ref{example_distributions_1}. Such a noise has most of its mass in just few coordinates, and the size of its largest entry $\|w\|_\infty$ is comparable to its $\ell_2$-norm $\|w\|_2$, i.e.  $\|w\|_{\infty} \approx \|w\|_{2}=10^{-2}$ with high probability. 

In this case, the conclusions about the recovery accuracy of equality-constrained \sloppy{$\ell_1$-minimization} \cref{def_ell1_constrained} that can be drawn from \Cref{theorem_robustnessGuarantees}.1 and  \Cref{theorem_robustnessGuarantees}.2 predict a better behavior of Gaussian and Student-t measurements than for Bernoulli measurements, in particular for $m \ll N$: Since then
\[
\|w\|^{(\sqrt{\log{\e N/m}})}= \sqrt{\log{\e N/m}}\cdot  \|w\|_{\infty} \approx \sqrt{\log{\e N/m}}\cdot \|w\|_{2} = \sqrt{\log{\e N/m}}  \cdot 10^{-2}
\]
with high probability, \Cref{theorem_robustnessGuarantees} predicts a reconstruction error of 
\[
\| x - \Delta_{1}(y)\|_{2} \leq D \cdot \sqrt{\log{\frac{\e N}{m}}} \cdot 10^{-2}
\]
for Bernoulli measurements, but a reconstruction error of 
\[
\| x - \Delta_{1}(y)\|_{2} \leq D \cdot  10^{-2}
\]
for the two other, more heavy-tailed measurement models (here, $D$ is some constant).

These predictions can be well confirmed in the experiment illustrated in \Cref{figure_SparseNoise}, repeating the experiment from \Cref{subsec_SphericalNoise} for this different, heavy-tailed noise model: 
Unlike before, the reconstruction error of equality-constrained $\ell_1$-minimization \cref{def_ell1_constrained} for Bernoulli matrices is now consistently worse than for the Gaussian and Student-$t$ measurement matrices if $m \ll N$, i.e., if $m = 250,\ldots,2000$. It is interesting to note that equality-constrained $\ell_1$-minimization with Student-t matrices (with $k = \lceil \log(N)\rceil = 9$ degrees of freedom) is even slightly more robust than  in the case that Gaussian matrices are used, especially if $m$ is small. 

On the other hand, the relative performance of Student-t measurements is worse than the one of Gaussian measurements if the noise-aware quadratically constrained $\ell_1$-minimization \cref{def_ell1_QC} is used as a reconstruction algorithm.

As for spherical noise, we also note here that overestimating the noise level by a factor of two ($\eta = 2\cdot \|w\|_2$) in \ref{def_ell1_QC} leads to worse reconstructions than the noise-blind usage of \cref{def_ell1_constrained}.

We want to stress two conclusions from these experiments:
\begin{itemize}
\item The noise-blind reconstruction algorithm \cref{def_ell1_constrained} is at least as robust in presence of certain heavy-tailed measurement matrices as in the case of Gaussian measurement matrices, especially if the measurement matrix has few rows $m$.
\item While a very precise choice in the noise level estimate $\eta$ of \cref{def_ell1_QC} leads to better reconstructions than using the noise-blind variant \cref{def_ell1_constrained}, the reconstructions deteriorate quickly once $\eta$ is chosen as an overestimate of the actual noise level. In this sense, it is preferred to choose an underestimated $\eta$ or even $\eta=0$ (resulting again in \cref{def_ell1_constrained}) in situations where there is little a priori knowledge about the noise $w$.
\end{itemize}

\section{Proof of the $\ell_1$-quotient property and of the robustness of noise-blind $\ell_1$-minimization} \label{sec_proofs}
In this section, we provide proofs of \Cref{theorem_QP} and \Cref{theorem_robustnessGuarantees}. As a first step, we provide characterizations of the clipped $\ell_2$-norm $\|y\|^{(\alpha)} = \max\{ \|y\|_2, \alpha \|y\|_{\infty}\}$ of a vector $y \in \R^m$ for $\alpha \geq 1$ and also of its dual norm $\|\cdot\|_*^{(\alpha)}$ in some preliminary lemmas. A sufficient condition for a matrix to $A$ to fulfill the $\ell_1$-quotient property relative to a general norm is provided in Lemma \ref{lemma_QP_equivalence}. Then, we present probabilistic arguments for this condition relative to clipped norms using results derived from \emph{Mendelson's small ball method} \cite{MendelsonLearning15,KoltchinskiiMendelson15,DLR16} by bounding appropriate quantities related to the distribution in question, which constitutes the main part of the proof.

\subsection{Preliminary lemmas}
Recall that for $\alpha \geq 1$, we defined the \emph{clipped $\ell_2$-norm with parameter $\alpha$} of $y \in \R^m$ as
\[
\|y\|^{(\alpha)} := \max\{ \|y\|_2, \alpha \|y\|_{\infty}\}.
\]
We will use the following two lemmas about its dual norm $\|y\|_*^{(\alpha)}= \sup_{\|z\|^{(\alpha)}\leq 1} |\langle x,z\rangle|$ that can be found in \cite[Lemma 11.22]{FoucartRauhut13} and \cite[Lemma 2]{MS90}. They provide an explicit formula for $\|\cdot\|_*^{(\alpha)}$ and compare it with the norm $\| \cdot \|_{\alpha^2,\dagger}$ defined below, whose advantage will become clear later on.
\begin{lemma}
Let $T :=T(y):= \{ i \in [m] : \alpha |y_i| \leq \|y\|_2\}$ and \\
$T^c = \left\{ i \in [m] : |y_i| > \frac{1}{\alpha} \|y\|_2\right\}$.

Then
\begin{equation} \label{eq_def_dualclipped}
\|y\|_*^{(\alpha)} = \|y_T\|_2 + \frac{1}{\alpha} \|y_{T^c}\|_1
\end{equation}
for all $ y \in \R^m$.
\end{lemma}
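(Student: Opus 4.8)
The plan is to compute $\|y\|_*^{(\alpha)}$ as an explicit constrained optimum. By definition $\|y\|_*^{(\alpha)}$ is the support function of the unit ball of $\|\cdot\|^{(\alpha)}$, and since $\|z\|^{(\alpha)}\le 1$ is equivalent to $\|z\|_2\le 1$ and $\|z\|_\infty\le 1/\alpha$ holding simultaneously, that unit ball is $K:=B_2^m\cap\tfrac1\alpha B_\infty^m$; hence
\[
\|y\|_*^{(\alpha)}=\max\bigl\{\langle y,z\rangle:\ \|z\|_2\le 1,\ \|z\|_\infty\le\tfrac1\alpha\bigr\}.
\]
Replacing each $z_i$ by $\sign(y_i)|z_i|$ lets me assume $y_i\ge 0$ (the case $y=0$ is trivial), and the degenerate configurations $T=[m]$ — which is exactly the case $\alpha\|y\|_\infty\le\|y\|_2$, where $z=y/\|y\|_2\in K$ and the claim reads $\|y\|_*^{(\alpha)}=\|y\|_2$ — and $T=\varnothing$ can be handled directly, so I assume $\varnothing\ne T\ne[m]$. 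The upper bound $\|y\|_*^{(\alpha)}\le\|y_T\|_2+\tfrac1\alpha\|y_{T^c}\|_1$ is then immediate: since $\|\cdot\|^{(\alpha)}$ is a maximum of the two norms $\|\cdot\|_2$ and $\alpha\|\cdot\|_\infty$, its dual equals the infimal convolution $\|y\|_*^{(\alpha)}=\inf\{\|u\|_2+\tfrac1\alpha\|v\|_1:\ u+v=y\}$ of the corresponding dual norms, and the coordinate-clean split $u=y_T$, $v=y_{T^c}$ yields the bound.

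For the matching lower bound I would exhibit a single admissible $z\in K$ attaining the right-hand side. The natural candidate has $z_T$ proportional to $y_T$ and $z_{T^c}=\tfrac1\alpha\sign(y_{T^c})$, suitably scaled so that $z\in K$. The definitions of $T$ and $T^c$ are exactly what make such a $z$ both feasible and optimal: on $T$ the inequality $\alpha|y_i|\le\|y\|_2$ keeps the $\ell_\infty$-constraint from being violated, on $T^c$ it is met with equality, and the identity $\|y\|_2^2=\|y_T\|_2^2+\|y_{T^c}\|_2^2$ together with the complementary inequality $|y_i|>\|y\|_2/\alpha$ on $T^c$ is what allows the $T$-block and the $T^c$-block to coexist inside the single Euclidean unit ball. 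Computing $\langle y,z\rangle$ and simplifying reproduces $\|y_T\|_2+\tfrac1\alpha\|y_{T^c}\|_1$, and weak duality against the upper bound closes the argument.

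The step I expect to be the main obstacle is this lower bound: choosing the extremal $z$ and verifying $z\in K$, which amounts to apportioning the $\ell_2$-budget correctly between the part supported on $T$ (pointing along $y_T$) and the part supported on $T^c$ (saturating the $\ell_\infty$-constraint), and checking that the cut-off $\alpha|y_i|=\|y\|_2$ defining $T$ is precisely the one for which the two active constraints are mutually consistent — equivalently, running the KKT conditions for the maximization and showing that the multiplier of the $\ell_2$-constraint is forced to the value producing threshold $\|y\|_2/\alpha$. Everything else (the sign reduction, the two degenerate cases, and the easy upper bound via infimal convolution) is routine. Alternatively, the lemma may simply be cited from \cite[Lemma 11.22]{FoucartRauhut13} and \cite[Lemma 2]{MS90}.
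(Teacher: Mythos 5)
Your upper bound is sound: since $\|\cdot\|^{(\alpha)}$ is the maximum of $\|\cdot\|_2$ and $\alpha\|\cdot\|_\infty$, its dual norm is the infimal convolution $\|y\|_*^{(\alpha)}=\min\{\|u\|_2+\alpha^{-1}\|v\|_1:\ u+v=y\}$, and the split $u=y_T$, $v=y_{T^c}$ gives $\|y\|_*^{(\alpha)}\le\|y_T\|_2+\alpha^{-1}\|y_{T^c}\|_1$. The genuine gap is the matching lower bound, and it cannot be closed: your candidate $z$ with $z_{T^c}=\alpha^{-1}\sign(y_{T^c})$ and $z_T\propto y_T$ must be rescaled to fit inside $B_2^m$, so its $T$-block contributes at most $\sqrt{1-|T^c|/\alpha^2}\,\|y_T\|_2<\|y_T\|_2$ whenever $T^c\neq\varnothing$ and $y_T\neq 0$; and in fact no admissible $z$ can attain the right-hand side, because the asserted equality is false in general. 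Take $m=2$, $\alpha=\sqrt{2}$ (so even $\alpha^2$ is an integer), $y=(1,\tfrac12)$: then $\|y\|_2=\sqrt{5}/2$, $T=\{2\}$, $T^c=\{1\}$, and the right-hand side equals $\tfrac12+\tfrac{1}{\sqrt2}\approx 1.207$, whereas the unit ball of $\|\cdot\|^{(\alpha)}$ is exactly the square $[-\tfrac{1}{\sqrt2},\tfrac{1}{\sqrt2}]^2$ (it is inscribed in $B_2^2$), so $\|y\|_*^{(\alpha)}=\sup_{\|z\|^{(\alpha)}\le 1}\langle y,z\rangle=\tfrac{3}{2\sqrt2}\approx 1.061$; the infimal convolution confirms this via $u=(\tfrac12,\tfrac12)$, $v=(\tfrac12,0)$. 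The reason your KKT plan cannot rescue the argument is that the threshold separating the coordinates where the $\ell_\infty$-constraint saturates is determined self-consistently by the mass of $y$ on the non-saturated set, not by the fixed cutoff $\|y\|_2/\alpha$ that defines $T$.

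For comparison: the paper offers no proof of this statement at all; it imports it from \cite[Lemma 11.22]{FoucartRauhut13} and \cite[Lemma 2]{MS90}, and everything the paper subsequently uses goes through the exact infimal-convolution description and two-sided comparisons up to absolute constants, such as \Cref{lemma_norm_alpha_star}. So the productive repair of your argument is to keep the inequality ``$\le$'' you already have, and complement it by a reverse inequality with an absolute constant (for instance by testing against $z$ supported on $T^c$ with $z_{T^c}=\alpha^{-1}\sign(y_{T^c})$, which is feasible since $|T^c|<\alpha^2$, and against $z=y_T/\|y\|_2$, which is feasible by the definition of $T$), rather than aiming at an exact identity with the fixed threshold $\|y\|_2/\alpha$.
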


\begin{lemma} \label{lemma_norm_alpha_star} 
Assume $\alpha^2$ is an integer. Then the dual norm $\|\cdot \|_*^{(\alpha)}$ of $\|\cdot \|^{(\alpha)}$ is comparable with the norm $\| \cdot \|_{\alpha^2,\dagger}$ defined by
\begin{equation} \label{def_dagger_norm}
\| y \|_{\alpha^2,\dagger} := \max \left\{ \sum_{\ell =1 }^{\alpha^2} \|y_{B_\ell}\|_2, B_1,\ldots,B_{\alpha^2} \text{ form a partition of }[m] \right\}
\end{equation}
in the sense that
\[
\frac{1}{\alpha} \| y \|_{\alpha^2,\dagger} \leq \| y \|_*^{(\alpha)} \leq \frac{\sqrt{2}}{\alpha}  \| y \|_{\alpha^2,\dagger}
\]
for all $y \in \R^m$.
\end{lemma}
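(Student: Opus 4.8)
The plan is to prove the two inequalities separately after a normalization. Both $\|\cdot\|_*^{(\alpha)}$ and $\|\cdot\|_{\alpha^2,\dagger}$ are invariant under permutations and sign changes of the coordinates, so I may assume $y_1\ge y_2\ge\cdots\ge y_m\ge 0$, and --- after appending zero coordinates, which changes neither norm --- that $m\ge k:=\alpha^2$. Throughout I use the description of $\|y\|_*^{(\alpha)}$ from \cref{eq_def_dualclipped}, namely $\|y\|_*^{(\alpha)}=\|y_T\|_2+\tfrac1\alpha\|y_{T^c}\|_1$, where in the sorted coordinates $T^c=\{1,\dots,r\}$ is an initial segment of ``large'' coordinates and $T$ consists of ``small'' coordinates satisfying $\alpha|y_i|\le\|y_T\|_2$; in particular $\|y_T\|_2^2\le|T|\,\alpha^{-2}\|y_T\|_2^2$ forces $|T|\ge\alpha^2=k$.

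\emph{Lower bound} $\tfrac1\alpha\|y\|_{\alpha^2,\dagger}\le\|y\|_*^{(\alpha)}$. Fix any partition $B_1,\dots,B_k$ of $[m]$ and split each piece along $T$: by the triangle inequality in $\ell_2$ together with $\|\cdot\|_2\le\|\cdot\|_1$,
\[
\|y_{B_\ell}\|_2\le\|y_{B_\ell\cap T}\|_2+\|y_{B_\ell\cap T^c}\|_2\le\|y_{B_\ell\cap T}\|_2+\|y_{B_\ell\cap T^c}\|_1 .
\]
Summing over $\ell$, the $T^c$-terms add up to exactly $\|y_{T^c}\|_1$, while by Cauchy--Schwarz the $T$-terms, being at most $k$ summands whose squares total $\|y_T\|_2^2$, add up to at most $\sqrt k\,\|y_T\|_2=\alpha\|y_T\|_2$. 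Hence $\sum_\ell\|y_{B_\ell}\|_2\le\alpha\|y_T\|_2+\|y_{T^c}\|_1=\alpha\|y\|_*^{(\alpha)}$, and taking the supremum over partitions gives the claim.

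\emph{Upper bound} $\|y\|_*^{(\alpha)}\le\tfrac{\sqrt2}\alpha\|y\|_{\alpha^2,\dagger}$. Here one exhibits a near-optimal partition. If $\|y\|_\infty\le\tfrac1\alpha\|y\|_2$ the argument is immediate: every coordinate has energy $y_i^2\le\alpha^{-2}\|y\|_2^2=\tfrac1k\|y\|_2^2$, so a greedy assignment --- always append the next coordinate to the currently least-energetic block --- produces $k$ blocks of energy $E_\ell:=\|y_{B_\ell}\|_2^2\le\tfrac2k\|y\|_2^2$, since when a block receives its last coordinate it lies below the running average $\le\tfrac1k\|y\|_2^2$ and the added coordinate contributes $\le\tfrac1k\|y\|_2^2$; using $\sqrt{E_\ell}\ge E_\ell\big/\sqrt{\tfrac2k\|y\|_2^2}$ and $\sum_\ell E_\ell=\|y\|_2^2$ this gives $\|y\|_{\alpha^2,\dagger}\ge\sum_\ell\sqrt{E_\ell}\ge\sqrt{k/2}\,\|y\|_2=\tfrac\alpha{\sqrt2}\|y\|_2\ge\tfrac\alpha{\sqrt2}\|y\|_*^{(\alpha)}$, the last step because the feasible set defining $\|\cdot\|_*^{(\alpha)}$ is contained in the Euclidean unit ball. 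If instead $\|y\|_\infty>\tfrac1\alpha\|y\|_2$, the large coordinates are genuine spikes; one peels them off into their own blocks, balances the flat remainder into the blocks that are left exactly as above, and compares the resulting sum of $\ell_2$-norms against $\|y_T\|_2+\tfrac1\alpha\|y_{T^c}\|_1$.

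The spiky case is the step I expect to be the main obstacle: the two natural losses --- a factor $\tfrac1{\sqrt2}$ from $\sqrt{a^2+b^2}\ge\tfrac1{\sqrt2}(a+b)$ when a spike is merged with flat coordinates, and the factor $2$ in the greedy energy-balancing --- must be prevented from compounding, so that only $\sqrt2$ (and not $2$) is paid overall. Controlling this requires tracking $|T^c|$ against $\alpha^2$ (when $|T^c|\le\alpha^2$ the spikes can each occupy a separate block and no merging loss occurs, whereas when $|T^c|$ is large the flat part already carries enough energy) and offsetting the surplus in the $\|y_{T^c}\|_1$-term against the deficit in the $\|y_T\|_2$-term; this is exactly the content of \cite[Lemma~2]{MS90}, to which the remaining bookkeeping can be deferred. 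By contrast, the lower bound is the short Cauchy--Schwarz computation above.
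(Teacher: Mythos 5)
The paper does not actually prove this lemma: it is quoted verbatim from the literature (the dual-norm formula from \cite[Lemma 11.22]{FoucartRauhut13} and the two-sided comparison from \cite[Lemma 2]{MS90}), so there is no in-paper argument to compare against. Judged on its own terms, your proposal is genuinely partial. The lower bound is complete and correct: for any partition, splitting each block along $T$ and applying Cauchy--Schwarz to the $\alpha^2$ terms $\|y_{B_\ell\cap T}\|_2$ gives $\sum_\ell\|y_{B_\ell}\|_2\le\alpha\|y_T\|_2+\|y_{T^c}\|_1=\alpha\|y\|_*^{(\alpha)}$, exactly as you say (the aside that $|T|\ge\alpha^2$ is neither needed nor quite correct when $y_T=0$, and note the paper's $T$ is defined via $\|y\|_2$, not $\|y_T\|_2$, but none of this affects the argument). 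The flat case of the upper bound ($T^c=\emptyset$) is also fine: greedy energy-balancing gives $\|y\|_{\alpha^2,\dagger}\ge\tfrac{\alpha}{\sqrt2}\|y\|_2\ge\tfrac{\alpha}{\sqrt2}\|y\|_*^{(\alpha)}$.

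The gap is exactly where you flag it: the general (spiky) case of the upper bound is not established by the proposal. With the construction you describe --- one spike per block plus a greedily balanced share of the flat energy --- the two losses do compound: $\sqrt{(y_\ell^*)^2+E_\ell}\ge\tfrac1{\sqrt2}\bigl(y_\ell^*+\sqrt{E_\ell}\bigr)$ costs $\sqrt2$, and the factor-$2$ imbalance from greedy balancing turns $\sum_\ell\sqrt{E_\ell}$ into only $\tfrac{\alpha}{\sqrt2}\|y_T\|_2$, so the route as written yields the comparison with constant $2/\alpha$ rather than the asserted $\sqrt2/\alpha$; getting $\sqrt2$ requires a genuinely different bookkeeping (e.g.\ keeping the spike term and the tail term coupled so that Cauchy--Schwarz is applied only once), which is precisely the content you defer to \cite[Lemma 2]{MS90}. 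So as a self-contained proof of the statement as stated, the upper bound is missing; as a matter of sourcing, your deferral lands on the same citation the paper itself relies on, and a weakened constant ($2$ in place of $\sqrt2$) would in any case only change absolute constants in the downstream use in \Cref{lemma_lowerbd_Q}.
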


We note that for $\alpha =1$, the clipped norm and its dual norm reduce to the $\ell_2$-norm since $\|y\|_{\infty} \leq \|y\|_2$, i.e., $\|y\|^{(\alpha)} = \|y\|_{2} = \|y\|_*^{(\alpha)}$ for all $y \in \R^m$ if $\alpha =1$.

The following two lemmas provide a reformulation of the $\ell_1$-quotient property relative to a norm $\| \cdot \|$, which will be more convenient to analyze.
\begin{lemma}[{\cite[Lemma 11.17]{FoucartRauhut13}}] \label{lemma_FR_11_13}
A matrix $A \in \R^{m \times N}$ has the $\ell_1$-quotient property with constant $d$ relative to $\| \cdot \|$ if and only if
\[
\| w \|_* \leq d s_*^{1/2} \|A^* w \|_{\infty} \quad \text{ for all } w \in \R^m,
\]
where $s_* = m / \log(\e N/m)$.
\end{lemma}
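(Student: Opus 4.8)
The plan is to derive the equivalence from finite-dimensional duality; the concrete value of $s_* = m/\log(\e N/m)$ is irrelevant here, so abbreviate $c := d\, s_*^{1/2} > 0$.

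\emph{From the quotient property to the inequality.} This direction is immediate from the definitions of $\|\cdot\|_*$ and $A^*$. Fix $w \in \R^m$. Since $\|w\|_* = \sup\{\langle w,v\rangle : v \in \R^m,\ \|v\| \le 1\}$, it suffices to bound $\langle w,v\rangle$ for each $v$ with $\|v\| \le 1$. By the quotient property there is $u \in \R^N$ with $Au = v$ and $\|u\|_1 \le c\|v\| \le c$; hence, by Hölder's inequality,
\[
\langle w,v\rangle = \langle w, Au\rangle = \langle A^*w, u\rangle \le \|A^*w\|_\infty\,\|u\|_1 \le c\,\|A^*w\|_\infty .
\]
Taking the supremum over $v$ yields $\|w\|_* \le c\,\|A^*w\|_\infty$, which is the claimed inequality.

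\emph{From the inequality to the quotient property.} Here I would invoke strong linear-programming duality (equivalently, the minimax theorem). Fix $w \in \R^m$; the case $w=0$ is trivial, so assume $w \ne 0$. First note that the hypothesis forces $A$ to be surjective: if $A^*\lambda_0 = 0$ for some $\lambda_0 \ne 0$, then $\|\lambda_0\|_* \le c\|A^*\lambda_0\|_\infty = 0$, a contradiction; thus $\Ker A^* = \{0\}$, i.e.\ $\Ran A = \R^m$. Consequently $\{u \in \R^N : Au = w\}$ is a non-empty closed affine subspace, and since $\|\cdot\|_1$ is coercive the value $\mu(w) := \min\{\|u\|_1 : u \in \R^N,\ Au = w\}$ is attained. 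Strong duality for this linear program gives
\[
\mu(w) = \sup\{\langle \lambda, w\rangle : \lambda \in \R^m,\ \|A^*\lambda\|_\infty \le 1\}.
\]
For any $\lambda$ feasible on the right-hand side, the hypothesis gives $\|\lambda\|_* \le c\|A^*\lambda\|_\infty \le c$, whence $\langle\lambda,w\rangle \le \|\lambda\|_*\,\|w\| \le c\,\|w\|$. Taking the supremum over $\lambda$ shows $\mu(w) \le c\,\|w\|$, i.e.\ the minimizer $u$ satisfies $Au=w$ and $\|u\|_1 \le d\,s_*^{1/2}\|w\|$; this is exactly the quotient property.

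\emph{Main point.} The only step requiring care is the converse direction, namely the use of LP strong duality and the attainment of $\mu(w)$; both are standard in finite dimensions — the former because the primal is feasible once surjectivity is established (and bounded below by $0$), the latter because $\|\cdot\|_1$ has bounded sublevel sets. One may also present the whole argument intrinsically: $w \mapsto \mu(w)$ is the norm on $\R^m \cong \R^N/\Ker A$ induced by $\|\cdot\|_1$ via the isomorphism $A$, its dual norm is precisely $w \mapsto \|A^*w\|_\infty$, and the asserted equivalence is then just the order-reversing duality between the norm inequalities $\mu \le c\,\|\cdot\|$ and $\|\cdot\|_* \le c\,\|A^*\cdot\|_\infty$, using reflexivity of finite-dimensional normed spaces.
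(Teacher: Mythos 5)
Your proof is correct: the easy direction via H\"older and the converse via LP strong duality (with the surjectivity observation guaranteeing primal feasibility and the generalized Cauchy--Schwarz bound $\langle\lambda,w\rangle\le\|\lambda\|_*\|w\|$) both check out. The paper does not prove this lemma itself but cites \cite[Lemma 11.17]{FoucartRauhut13}, whose argument is the same duality in slightly different clothing --- the quotient property is read as the inclusion $B_{\|\cdot\|}\subset d\sqrt{s_*}\,AB_1^N$ and compared via support functions $\|w\|_*$ and $\|A^*w\|_\infty$ --- which is exactly the intrinsic quotient-norm formulation you sketch in your closing paragraph.
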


\begin{lemma} \label{lemma_QP_equivalence}
Let $A = (a_{ji}) \in \R^{m \times N}$ 
be a matrix with columns $a_i \in \R^m$, $i \in [N]$ and $q=\log(N)$. If 
\begin{equation} \label{eq_qp4}
\inf_{w \in S^{\|\cdot\|_{*}} } \left(\frac{1}{N} \sum_{i=1}^N |\langle a_i,w \rangle|^q\right)^{1/q} \geq  \frac{1}{D} \sqrt{\log\Big(\frac{\e N}{m}\Big)},
\end{equation}
where $S^{\|\cdot\|_{*}} = \{ w \in \R^m : \|w\|_* = 1 \}$ is the unit sphere of the dual norm $\|\cdot\|_*$ of some norm $\|\cdot\|$, then $\frac{1}{\sqrt{m}}A$ fulfills the $\ell_1$-quotient property with constant $D$ relative to the norm $\|\cdot\|$.
\end{lemma}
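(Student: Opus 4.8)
The plan is to reduce the claim, via \Cref{lemma_FR_11_13}, to a pointwise lower bound on $\|A^{*}w\|_{\infty}$, and then observe that the hypothesis \cref{eq_qp4} already delivers exactly this bound, since the $\ell_{q}$-average of a finite sequence never exceeds its maximum.

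First I would apply \Cref{lemma_FR_11_13} to the matrix $M := \tfrac{1}{\sqrt{m}}A$: it states that $M$ has the $\ell_{1}$-quotient property with constant $D$ relative to $\|\cdot\|$ if and only if $\|w\|_{*} \le D s_{*}^{1/2}\,\|M^{*}w\|_{\infty}$ for all $w \in \R^{m}$, with $s_{*} = m/\log(\e N/m)$. Since $M^{*}w = \tfrac{1}{\sqrt{m}}A^{*}w$ and the $i$-th coordinate of $A^{*}w$ equals $\langle a_{i},w\rangle$, we have $\|M^{*}w\|_{\infty} = \tfrac{1}{\sqrt{m}}\max_{i\in[N]}|\langle a_{i},w\rangle|$. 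By homogeneity of both sides in $w$ (the case $w=0$ being trivial), it suffices to verify the inequality for $w$ on the dual unit sphere $S^{\|\cdot\|_{*}}$, i.e.\ to show
\[
\max_{i\in[N]}|\langle a_{i},w\rangle| \ \ge\ \frac{\sqrt{m}}{D\, s_{*}^{1/2}} \ =\ \frac{1}{D}\sqrt{\log\!\Big(\frac{\e N}{m}\Big)} \qquad \text{for all } w \in S^{\|\cdot\|_{*}},
\]
where the last equality is just the definition of $s_{*}$.

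To obtain this, I would use the elementary fact that for any $z \in \R^{N}$ and any $q \ge 1$ one has $\big(\tfrac{1}{N}\sum_{i=1}^{N}|z_{i}|^{q}\big)^{1/q} \le \max_{i\in[N]}|z_{i}|$, since each summand is bounded by $(\max_{i}|z_{i}|)^{q}$. Applying this with $z_{i} = \langle a_{i},w\rangle$ and $q = \log(N)$, and then invoking the hypothesis \cref{eq_qp4}, gives
\[
\max_{i\in[N]}|\langle a_{i},w\rangle| \ \ge\ \Big(\frac{1}{N}\sum_{i=1}^{N}|\langle a_{i},w\rangle|^{q}\Big)^{1/q} \ \ge\ \frac{1}{D}\sqrt{\log\!\Big(\frac{\e N}{m}\Big)}
\]
for every $w \in S^{\|\cdot\|_{*}}$, which is precisely the required bound. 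Combined with the reformulation from the first paragraph, this yields the $\ell_{1}$-quotient property of $\tfrac{1}{\sqrt{m}}A$ with constant $D$ relative to $\|\cdot\|$.

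There is no genuine obstacle in this argument: the lemma is a bookkeeping step whose purpose is to isolate the substantive content — the uniform lower bound \cref{eq_qp4} on the $\ell_{q}$-averages $\big(\tfrac{1}{N}\sum_{i}|\langle a_{i},w\rangle|^{q}\big)^{1/q}$ — which is established separately by a small-ball argument in the subsequent sections. The only point requiring care is the arithmetic linking $s_{*}^{1/2}$, the normalization $1/\sqrt{m}$, and the target radius $\sqrt{\log(\e N/m)}$.
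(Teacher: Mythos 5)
Your proof is correct and follows essentially the same route as the paper: reduce via \Cref{lemma_FR_11_13} applied to $\tfrac{1}{\sqrt{m}}A$ to the pointwise bound $\|A^*w\|_\infty \geq \tfrac{1}{D}\sqrt{\log(\e N/m)}\,\|w\|_*$, and then dominate the normalized $\ell_q$-average by the maximum (the paper phrases this as $\|A^*w\|_\infty \geq \e^{-1}\|A^*w\|_{\log N}$ using $N^{1/\log N}=\e$, which is the same estimate). The homogeneity reduction to the dual unit sphere and the $s_*$ arithmetic match the paper's argument.
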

\begin{proof}
Let $w \in \R^m$. Then with $q=\log(N)$, 
\[
\begin{split}
\big\|\Big(\frac{1}{\sqrt{m}} A\Big)^* w\big\|_{\infty} \geq \frac{1}{\mathrm{e}\sqrt{m}}\|A^* e\|_{\log N} &= \frac{1}{\e\sqrt{m}} \left(\sum_{i=1}^N |\langle a_i,w \rangle|^q\right)^{1/q} \\
 &= \frac{1}{\sqrt{m}} \left(\frac{1}{N}\sum_{i=1}^N |\langle a_i,w \rangle|^q\right)^{1/q},
\end{split}
\]
as $N^{1/q}= N^{1/\log(N)} = \e$. It follows from \Cref{lemma_FR_11_13} that the $\ell_1$-quotient property of $\frac{1}{\sqrt{m}}A$ with constant $D$ relative to the norm $\|\cdot\|$ is implied by
\[
\|A^* w\|_{\infty} \geq \frac{1}{D} \sqrt{\log(\e N/m)} \|w\|_* \quad \text{ for all }w\in \R^m,
\]
where $\|\cdot\|_*$ is the dual norm of the norm $\|\cdot\|$. This implies that
\[
\left(\frac{1}{N}\sum_{i=1}^N |\langle a_i,w \rangle|^q\right)^{1/q} \geq \frac{1}{D} \sqrt{\log(\e N/m)}\|w\|_*  \quad\text{ for all }w\in \R^m
\] 
is a sufficient condition for the assertion of the lemma.
\end{proof}

\subsection{Application of the small-ball method}
The following result due to \cite[Lemma III.1]{DLR16} and \cite[Theorem 1.5]{KoltchinskiiMendelson15} will be used to show the sufficient condition of Lemma \ref{lemma_QP_equivalence}. 
\begin{lemma} \label{theoremlemma31}
Let $1 \leq q < \infty$, let $S \subset \R^m$ be a set and $b_1, \ldots, b_N$ be i.i.d. copies of a random vector $b$ in $\mathbb{R}^m$. For $u > 0$, define
\[
Q_S (u) := \inf_{w \in S} \mathbb{P}(| \langle b,w \rangle | \geq u)
\]
and
\begin{equation} \label{eq_complexity_para}
\mathcal{R}_N (S) := \mathbb{E}\left[ \sup_{w \in S} \left| \frac{1}{N} \sum_{i=1}^N \epsilon_i \langle b_i, w\rangle \right|\right],
\end{equation}
where $(\epsilon_i)_{i \in [N]}$ is a Rademacher sequence that is independent from $(b_i)$.
Then, for $t > 0$, with probability at least $1- 2 \mathrm{e}^{-2 t^2}$,
\begin{equation} \label{eq_small_ball_lemma}
\inf_{w \in S} \frac{1}{N} \sum_{i=1}^N |\langle b_i, w\rangle|^q \geq u^q \left(Q_S (2u) - \frac{4}{u} \mathcal{R}_N (S) - \frac{t}{\sqrt{N}} \right).
\end{equation}
\end{lemma}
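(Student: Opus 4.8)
The plan is to follow the now-standard architecture of Mendelson's small-ball method: replace the $q$-th power by a count of large coordinates, relax that count to a Lipschitz surrogate so that an empirical-process argument applies, and then use bounded differences together with symmetrization and a contraction principle to control the resulting fluctuation. Concretely, for every index $i$ and every $w$ one has the pointwise estimate $|\langle b_i,w\rangle|^q \ge u^q\,\mathbf{1}\{|\langle b_i,w\rangle|\ge u\}$ (the indicator is $1$ exactly when the $q$-th power already dominates $u^q$, and $0$ otherwise). Averaging over $i$ and taking the infimum over $w\in S$, it suffices to show that, on an event of the stated probability,
\[
\inf_{w\in S}\frac1N\sum_{i=1}^N \mathbf{1}\{|\langle b_i,w\rangle|\ge u\} \;\ge\; Q_S(2u)-\tfrac4u\,\mathcal{R}_N(S)-\tfrac{t}{\sqrt N}.
\]

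For the surrogate, fix the piecewise-linear clip $\varphi_u:\R\to[0,1]$ that is $0$ on $[-u,u]$, is $1$ outside $[-2u,2u]$, and interpolates linearly in between; it is $\tfrac1u$-Lipschitz, satisfies $\varphi_u(0)=0$, and obeys $\mathbf{1}\{|x|\ge 2u\}\le\varphi_u(x)\le\mathbf{1}\{|x|\ge u\}$. The left inequality lets us lower bound the empirical average of indicators by the empirical average of $\varphi_u(\langle b_i,w\rangle)$, while the right inequality gives the mean bound $\E\,\varphi_u(\langle b,w\rangle)\ge\Pro(|\langle b,w\rangle|\ge 2u)\ge Q_S(2u)$ for every $w\in S$. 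Writing
\[
Z:=\sup_{w\in S}\Big(\E\,\varphi_u(\langle b,w\rangle)-\frac1N\sum_{i=1}^N\varphi_u(\langle b_i,w\rangle)\Big),
\]
we then have $\inf_{w\in S}\tfrac1N\sum_i\varphi_u(\langle b_i,w\rangle)\ge Q_S(2u)-Z$, so the whole claim reduces to the uniform deviation bound $Z\le\tfrac4u\mathcal{R}_N(S)+\tfrac{t}{\sqrt N}$ with high probability.

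To bound $Z$: since $\varphi_u$ takes values in $[0,1]$, resampling a single $b_i$ changes each empirical average, hence $Z$, by at most $1/N$, so McDiarmid's bounded-differences inequality yields $Z\le\E Z+\tfrac{t}{\sqrt N}$ off an event of probability at most $e^{-2t^2}$ (the slightly larger failure probability $2e^{-2t^2}$ in the statement is harmless). It remains to show $\E Z\le\tfrac4u\mathcal{R}_N(S)$. Standard symmetrization gives $\E Z\le 2\,\E\sup_{w\in S}\big|\tfrac1N\sum_i\epsilon_i\varphi_u(\langle b_i,w\rangle)\big|$ with $(\epsilon_i)$ a Rademacher sequence independent of $(b_i)$; then, conditionally on $(b_i)$, applying the Ledoux--Talagrand contraction principle (in its absolute-value form) to the $1$-Lipschitz map $\psi_u:=u\,\varphi_u$ with $\psi_u(0)=0$ gives $\E\sup_{w}\big|\tfrac1N\sum_i\epsilon_i\varphi_u(\langle b_i,w\rangle)\big|\le\tfrac1u\cdot 2\,\E\sup_w\big|\tfrac1N\sum_i\epsilon_i\langle b_i,w\rangle\big|=\tfrac2u\mathcal{R}_N(S)$. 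Combining the two factors of $2$ yields $\E Z\le\tfrac4u\mathcal{R}_N(S)$, and chaining all the estimates proves the lemma.

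None of the ingredients is deep, so the real work is bookkeeping: one must route the Lipschitz constant $1/u$ cleanly through the contraction step while paying the factor $2$ from symmetrization and the factor $2$ from the absolute-value version of Ledoux--Talagrand to land exactly on the constant $\tfrac4u$, and one must arrange the surrogate $\varphi_u$ to vanish at $0$ (needed for contraction) while still sandwiching $\mathbf{1}\{|\cdot|\ge 2u\}\le\varphi_u\le\mathbf{1}\{|\cdot|\ge u\}$ — which is precisely what forces the appearance of $Q_S(2u)$ rather than $Q_S(u)$. Everything else (the pointwise power bound, the bounded-differences constant, the mean lower bound) is routine.
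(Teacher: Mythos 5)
Your proof is correct: the chain indicator bound $|\langle b_i,w\rangle|^q\ge u^q\mathbf{1}\{|\langle b_i,w\rangle|\ge u\}$, the Lipschitz surrogate sandwiched between $\mathbf{1}\{|\cdot|\ge 2u\}$ and $\mathbf{1}\{|\cdot|\ge u\}$, McDiarmid with increments $1/N$, and symmetrization plus the absolute-value contraction principle (each contributing a factor $2$, giving $4/u$) is exactly the standard argument behind Mendelson's small-ball method. Note that the paper itself does not prove this lemma but imports it from the cited sources (Lemma III.1 of \cite{DLR16} and Theorem 1.5 of \cite{KoltchinskiiMendelson15}), whose proofs follow the same route you describe, so your write-up fills in the omitted argument faithfully; your observation that the one-sided McDiarmid bound $1-\e^{-2t^2}$ is even slightly stronger than the stated $1-2\e^{-2t^2}$ is also accurate.
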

To prove our main results, Lemma \ref{theoremlemma31} is used by choosing the set $S = S^{\|\cdot \|_*^{(\beta)}}$, where $S^{\|\cdot \|_*^{(\beta)}}$ is the unit sphere of the dual of the clipped norm with $\beta = \sqrt{c_1 \log\left(\frac{eN}{m}\right)}$ and $\beta = 1$, respectively, and $c_1 > 0$ being an absolute constant. We show the following lemma.


\begin{lemma} \label{lemma_upperbd_R}
Let $b_1,\ldots,b_N$ be distributed as i.i.d.\ vectors in $\R^m$ with independent entries $b_{ij} \sim X$, where $X$ is a variable such that $\|X\|_{L_2}=1$. Let $S^{\|\cdot \|_*^{(\alpha)}} = \{ w \in \R^m \mid \|w\|_*^{(\alpha)} =1\}$ be the unit sphere of the norm $\|\cdot \|_*^{(\alpha)}$ for $\alpha \geq 1$.
\begin{enumerate}
\item If $\alpha = 1$, then the complexity parameter $\mathcal{R}_N (S^{\|\cdot \|_*^{(\alpha)}})$ from \cref{eq_complexity_para} fulfills
\[
  \mathcal{R}_N (S^{\|\cdot \|_*^{(\alpha)}}) = \mathcal{R}_N (S^{\|\cdot \|_2}) \leq \sqrt{\frac{m}{N}}.
\]
\item Assume $\alpha > 1$. If $X$ fulfills the weak moment assumption of order $\max\{4,\log(m)\}$ with constants $\kappa_1$ and $\gamma$ and if $N \geq (\log(m))^{2 \gamma -1}$, then the complexity parameter $\mathcal{R}_N (S^{\|\cdot \|_*^{(\alpha)}})$ from \cref{eq_complexity_para} fulfills
\[
 \mathcal{R}_N (S^{\|\cdot \|_*^{(\alpha)}}) \leq\frac{1}{\sqrt{N}}\big(\sqrt{m} + \alpha \e^{2\gamma}  c_0\kappa_1  \sqrt{\log(m)}\big) 
\]
for an absolute constant $c_0 >0$.
\end{enumerate}
\end{lemma}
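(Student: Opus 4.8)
Write $v:=\tfrac1N\sum_{i=1}^N\epsilon_i b_i\in\R^m$. By the duality between the clipped norm $\|\cdot\|^{(\alpha)}$ and its dual $\|\cdot\|_*^{(\alpha)}$ (the bidual of a norm on $\R^m$ is the norm itself),
\[
\mathcal R_N\big(S^{\|\cdot\|_*^{(\alpha)}}\big)=\E\sup_{\|w\|_*^{(\alpha)}=1}\big|\langle v,w\rangle\big|=\E\,\|v\|^{(\alpha)}=\E\max\{\|v\|_2,\alpha\|v\|_\infty\}.
\]
For part (1), $\alpha=1$, this is simply $\E\|v\|_2$, and Jensen's inequality together with the independence of $(\epsilon_i)$ and $(b_i)$ reduces it to the second-moment computation $\E\|v\|_2^2=\tfrac1{N^2}\sum_{i}\E\|b_i\|_2^2=m/N$, using $\E\|b_i\|_2^2=\sum_{j=1}^m\E b_{ij}^2=m$ since $\|X\|_{L_2}=1$. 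Hence $\mathcal R_N(S^{\|\cdot\|_2})\le\sqrt{m/N}$.

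For part (2), $\alpha>1$, I would split $\|v\|^{(\alpha)}\le\|v\|_2+\alpha\|v\|_\infty$, so the $\ell_2$-term contributes at most $\sqrt{m/N}$ by part (1) and it remains to bound $\E\|v\|_\infty$. Since all coordinates $v_j=\tfrac1N\sum_i\epsilon_i b_{ij}$ are identically distributed, the moment method gives, for any $p\ge1$,
\[
\E\|v\|_\infty=\E\max_{j\in[m]}|v_j|\le\Big(\sum_{j=1}^m\E|v_j|^p\Big)^{1/p}=m^{1/p}\,\|v_1\|_{L_p},
\]
and I would choose $p:=\max\{4,\log m\}$, so that $m^{1/p}\le m^{1/\log m}=\e$. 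Now $\|v_1\|_{L_p}=\tfrac1N\big\|\sum_{i=1}^N\epsilon_i b_{i1}\big\|_{L_p}$ is the $L_p$-norm of a normalized sum of $N$ i.i.d.\ symmetric, mean-zero, variance-one random variables.

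The key step is to invoke a Rosenthal-type moment inequality in the sharp form $\big\|\sum_{i=1}^N\epsilon_i b_{i1}\big\|_{L_p}\le C\big(\sqrt{pN}\,\|X\|_{L_2}+N^{1/p}\|X\|_{L_p}\big)$ with $C$ absolute, in which the large-deviation term $N^{1/p}\|X\|_{L_p}$ carries no extra polynomial factor in $p$ and the Gaussian term carries $\sqrt p$ rather than $p/\log p$. Inserting $\|X\|_{L_2}=1$ and the weak moment bound $\|X\|_{L_p}\le\kappa_1 p^\gamma$ (valid since $4\le p=\log m$), dividing by $N$ and multiplying by $m^{1/p}\le\e$, one obtains
\[
\E\|v\|_\infty\le\e C\Big(\frac{\sqrt{\log m}}{\sqrt N}+\kappa_1(\log m)^\gamma\,\frac{N^{1/\log m}}{N}\Big).
\]
The first summand is already of the required form, as $\kappa_1\ge4^{-\gamma}$ (from $\|X\|_{L_4}\ge\|X\|_{L_2}=1$) forces $\e^{2\gamma}\kappa_1\ge(\e^2/4)^\gamma\ge1$. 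For the second summand I would use the hypothesis $N\ge(\log m)^{2\gamma-1}$: writing $N^{1/\log m}/N=N^{1/\log m-1/2}\,N^{-1/2}$ and bounding $N^{1/\log m-1/2}\le(\log m)^{-(2\gamma-1)(1/2-1/\log m)}$, the exponents collect to give
\[
\kappa_1(\log m)^\gamma\,\frac{N^{1/\log m}}{N}\le\kappa_1(\log m)^{\frac{2\gamma-1}{\log m}}\,\frac{\sqrt{\log m}}{\sqrt N}\le\e^{2\gamma}\kappa_1\,\frac{\sqrt{\log m}}{\sqrt N},
\]
where the last step uses the elementary bound $x^{1/x}\le\e^{1/\e}<\e$. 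Combining the two summands yields $\E\|v\|_\infty\le c_0\e^{2\gamma}\kappa_1\sqrt{\log m}/\sqrt N$ for an absolute $c_0$, and adding back $\E\|v\|_2\le\sqrt{m/N}$ gives the claimed bound $\mathcal R_N(S^{\|\cdot\|_*^{(\alpha)}})\le\tfrac1{\sqrt N}\big(\sqrt m+\alpha\e^{2\gamma}c_0\kappa_1\sqrt{\log m}\big)$. (The range $\log m<4$ is handled verbatim with $p=4$, absorbed into $c_0$.)

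The main obstacle is the Rosenthal step: one needs exactly the right dependence on $p$ in the moment inequality for heavy-tailed i.i.d.\ sums, since the target carries the factor $\e^{2\gamma}$ rather than $(\log m)^\gamma$ in front of $\sqrt{\log m}$, and this tight power of $\log m$ is precisely what is recovered from the interplay of the $p^\gamma$-moment growth at $p=\log m$ with the hypothesis $N\ge(\log m)^{2\gamma-1}$; a bound with an extra factor $p$ on the deviation term, or $p/\log p$ on the Gaussian term, would already be too weak. Everything else is routine.
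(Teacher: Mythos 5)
Your duality step, part (1), the reduction of $\E\|v\|_\infty$ to a single-coordinate $L_p$ bound with $p=\max\{4,\log m\}$, the $m^{1/p}\le \e$ trick, and the closing algebra that exploits $N\ge(\log m)^{2\gamma-1}$ all match the paper's argument. The genuine gap is exactly at the step you single out as the main obstacle: the ``sharp Rosenthal'' inequality $\big\|\sum_{i=1}^N\epsilon_i b_{i1}\big\|_{L_p}\le C\big(\sqrt{pN}\,\|X\|_{L_2}+N^{1/p}\|X\|_{L_p}\big)$ with an absolute constant $C$ is false. Take $X$ symmetric with $\Pro(X=\pm\sqrt{N})=\tfrac{1}{2N}$ and $\Pro(X=0)=1-\tfrac1N$; then $\|X\|_{L_2}=1$ and $\|X\|_{L_p}=N^{1/2-1/p}$, so the right-hand side is of order $\sqrt{N}(\sqrt{p}+1)$, while $\sum_i\epsilon_i b_{i1}$ is $\sqrt{N}$ times a symmetrized Poisson-type sum whose $L_p$-norm is of order $p/\log p$, so the left-hand side is of order $\sqrt{N}\,p/\log p\gg\sqrt{N}\sqrt{p}$ for large $p$. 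This is not an accident: by Lata\l{}a's two-sided formula for i.i.d.\ sums (equivalently the Johnson--Schechtman--Zinn bound, which is attained by such examples), any bound using only $\|X\|_{L_2}$ and $\|X\|_{L_p}$ must carry a factor of order $p/\log p$ on the deviation term, and by your own accounting such a factor (equal to $\log m/\log\log m$ at $p=\log m$) is too weak for the target estimate.

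The repair---and the paper's route---is to use the weak moment hypothesis at \emph{all} orders $4\le r\le p$, not only at $r=p$: the paper invokes \cite[Lemma 2.8.]{ML17}, which says precisely that for mean-zero variables with $\|X\|_{L_r}\le\kappa_1 r^\gamma$ for all $r$ up to $q=\log m$, and under $N\ge(\log m)^{2\gamma-1}$, one has $\big\|N^{-1/2}\sum_i\epsilon_i b_{i1}\big\|_{L_q}\le c_0\,\e^{2\gamma-1}\kappa_1\sqrt{q}$; alternatively one can feed $\|X\|_{L_s}\le\kappa_1 s^\gamma$ for every $s\in[2,p]$ into Lata\l{}a's formula and use the same hypothesis on $N$. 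Note that the low-order constraint (already $\|X\|_{L_4}\le\kappa_1 4^\gamma$) is what rules out the sparse counterexample above, so the full range of moments is genuinely needed. With this substitution the rest of your write-up goes through and coincides with the paper's proof.
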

\begin{proof}
Let $h := \frac{1}{\sqrt{N}} \sum_{i=1}^N \epsilon_i b_i$, 
where $(\epsilon_i)_{i=1}^m$ is a Rademacher sequence
independent of $(b_i)$. 
Then we obtain
\begin{equation} \label{eq_R_1}
\begin{split}
\mathcal{R}_N (B^{\|\cdot \|_*^{(\alpha)}}) &= \mathbb{E}\left[ \sup_{w \in B^{\|\cdot \|_*^{(\alpha)}}} \Big| \left\langle w, \frac{1}{N} \sum_{i=1}^N \epsilon_i b_i\right\rangle \Big| \right] = \mathbb{E}\left[ \left\| \frac{1}{N} \sum_{i=1}^N \epsilon_i b_i \right\|^{(\alpha)} \right] \\
&= \frac{1}{\sqrt{N}} \mathbb{E}[ \|h\|^{(\alpha)} ] = \frac{1}{\sqrt{N}} \mathbb{E}\left[ \max\{ \|h \|_2, \alpha \|h\|_{\infty}\} \right], 
\end{split}
\end{equation}
using the definition of fact that $\|\cdot\|_{**}^{(\alpha)} = \|\cdot\|^{(\alpha)}$, i.e., that the dual norm of $\|\cdot\|_{*}^{(\alpha)}$ is again the original norm $\|\cdot\|^{(\alpha)}$ in the second equality.

Furthermore, using Jensen's inequality, we estimate
\[
\begin{split}
\mathbb{E}\big[\|h\|_2\big] &= \frac{1}{\sqrt{N}} \mathbb{E}\Big[\| \sum_{i=1}^N \epsilon_i b_i \|_2\Big] \leq \frac{1}{\sqrt{N}} \mathbb{E}\Big[\| \sum_{i=1}^N \epsilon_i b_i \|_2^2\Big]^{1/2} \\
&= \frac{1}{\sqrt{N}}\Big(\E_{b}\big[\E_\epsilon\big[\sum_{i,k=1}^N \sum_{j=1}^m \epsilon_i \epsilon_k b_{ij}b_{kj}\big]\big]\Big)^{1/2} = \frac{1}{\sqrt{N}}\Big(\E_{b}\big[\sum_{i=1}^N \sum_{j=1}^m b_{ij}b_{ij}\big]\Big)^{1/2} \\
&= \frac{\sqrt{N m}}{\sqrt{N}} (\E|X|^2)^{1/2} = \sqrt{m},
\end{split}
\]
since $\| X\|_{L_2} = (\E|X|^2)^{1/2} = 1 $ by assumption. Assuming that $\alpha = 1$, this shows the first statement, since $\|h\|_\infty \leq \|h\|_2$ and therefore 
\[
 \mathcal{R}_N (B^{\|\cdot \|_*^{(\alpha)}}) = \frac{1}{\sqrt{N}} \mathbb{E}\big[\|h \|_2\big] \leq \sqrt{\frac{m}{N}}.
\]
To upper bound \cref{eq_R_1} for $\alpha > 1$, we calculate, using the notation $h_j = \frac{1}{\sqrt{N}} \sum_{i=1}^N \epsilon_i b_{ij}$ for the $j$-th entry of $h$, that for $q = \log (m)$
\[
\begin{split}
\E[\|h\|_{\infty}]^q &= \E\big[\max_{j=1}^m | h_j |\big]^q \leq \E\big[ \|h\|_{\ell_q}\big]^q =  \E\left[ \Big( \sum_{j=1}^m |h_j|^q\Big)^{1/q}\right]^{q} \leq
 \E\Big[\sum_{j=1}^m |h_j|^q\Big]   \\
 &=  \sum_{j=1}^m \E\big[ |h_j|^q\big] = m \E \left[\left(\frac{1}{\sqrt{N}} \sum_{i=1}^N \epsilon_i b_{i1}\right)^q \right],
\end{split}
\]
where the last inequality holds since the components of $(\sum_{i=1}^N \epsilon_i b_{ij})_{j=1}^m$ are identically distributed. Since the independent random variables $(\epsilon_i b_{i1})_{i=1}^N$ are mean-zero and since they fulfill the weak moment assumption of order $\log(m)$ moments with constants $\kappa_1$ and $\gamma$, the statement of \cite[Lemma 2.8.]{ML17} implies that there exists a constant $c_0 > 0$ such that
\[
\E[\|h\|_{\infty}]^q \leq m (c_0 \mathrm{e}^{2\gamma-1} \kappa_1 \sqrt{q})^q
\]
with $q = \log(m)$ if  $ N \geq (\log(m))^{2 \gamma -1}$. Therefore
\begin{equation}
\E[\|h\|_{\infty}] \leq m^{\frac{1}{\log(m)}} c_0 \mathrm{e}^{2\gamma-1} \kappa_1 \sqrt{\log(m)} = c_0 \mathrm{e}^{2\gamma} \kappa_1 \sqrt{\log(m)},
\end{equation}
and inserting this into \cref{eq_R_1}, we obtain
\[
\begin{split}
\mathcal{R}_N (B^{\|\cdot \|_*^{(\alpha)}}) &= \frac{1}{\sqrt{N}} \mathbb{E}\left[ \max\{ \|h \|_2, \alpha \|h\|_{\infty}\} \right] \leq \frac{1}{\sqrt{N}}\left(\E[\|h\|_2] + \alpha \E[\|h\|_{\infty}]\right) \\
&\leq \frac{\sqrt{m}}{\sqrt{N}} + \e^{2\gamma}  c_0\kappa_1 \frac{\alpha \sqrt{\log(m)}}{\sqrt{N}},
\end{split}
\]
which concludes the proof of the lemma.
\end{proof}

Lemma \ref{lemma_upperbd_R} will be helpful to upper bound the term \cref{eq_complexity_para} in \cref{eq_small_ball_lemma}. To achieve a meaningful lower bound of the tail parameter $Q_S (u)$ in Lemma \ref{theoremlemma31}, we will use our next result in \Cref{lemma_lowerbound_heavytailed} providing 
a rotation invariant lower bound for super-Gaussian random vectors. 

It makes use of the following result by Montgomery-Smith about the distribution of Rademacher sums.
\begin{lemma}[{\cite{MS90}}] \label{lemma_MontgomerySmith}
There exists a constant $c > 0$ such that for all $x \in \R^m$ with $\|x\|_2=1$ and $\alpha > 0$, we have
\[
\Pro \Big(\langle \epsilon, y \rangle > \alpha \|y\|_*^{(\alpha)}\Big) \leq \e^{-\alpha^2 / 2}
\]
and 
\[
\Pro\Big(\langle \epsilon, y \rangle > \frac{\alpha}{c} \|y\|_*^{(\alpha)}\Big) \geq c^{-1} \e^{- c \alpha^2}
\]
where $\epsilon = (\epsilon_1,\ldots,\epsilon_m)$ is a vector of i.i.d.\ Rademacher variables. The constant can be chosen as $c = 4 \log(12)$.
\end{lemma}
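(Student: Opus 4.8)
The two estimates have a different character, so I would prove them separately. The first is an upper (sub-Gaussian-type) tail bound admitting a short self-contained proof; the second is an anti-concentration statement, essentially a reformulation of Montgomery-Smith's theorem in the notation of \Cref{lemma_norm_alpha_star}, whose only genuine input is a constant-level anti-concentration of Rademacher sums.

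\emph{Upper bound.} First, since $\|\cdot\|_*^{(\alpha)}$ is $1$-homogeneous and the index set $T=T(y)$ in \eqref{eq_def_dualclipped} is scale-invariant, I may assume $\|y\|_2=1$, although the argument does not use it. I would split $y=y_T+y_{T^c}$ with $T=\{i:\alpha|y_i|\le\|y\|_2\}$; by \eqref{eq_def_dualclipped}, $\|y\|_*^{(\alpha)}=\|y_T\|_2+\tfrac1\alpha\|y_{T^c}\|_1$. Now $\langle\epsilon,y_{T^c}\rangle\le\|y_{T^c}\|_1$ deterministically, while $\langle\epsilon,y_T\rangle$ is a sum of independent mean-zero bounded terms, so Hoeffding's inequality gives $\Pro(\langle\epsilon,y_T\rangle>t)\le\exp\!\big(-t^2/(2\|y_T\|_2^2)\big)$. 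Choosing $t=\alpha\|y_T\|_2$ and using $\alpha\|y_T\|_2+\|y_{T^c}\|_1=\alpha\|y\|_*^{(\alpha)}$ then gives
\[
\Pro\big(\langle\epsilon,y\rangle>\alpha\|y\|_*^{(\alpha)}\big)\le\Pro\big(\langle\epsilon,y_T\rangle>\alpha\|y_T\|_2\big)\le\e^{-\alpha^2/2}.
\]

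\emph{Lower bound.} Here I would set $k:=\lceil\alpha^2\rceil$ and pick $B_1,\dots,B_k$, a partition of $[m]$ realizing the maximum in \eqref{def_dagger_norm}, so $\sum_\ell\|y_{B_\ell}\|_2=\|y\|_{k,\dagger}$ (in the regime $\alpha^2\le m$ of interest such a partition exists). On each block the Rademacher sum $X_\ell:=\langle\epsilon_{B_\ell},y_{B_\ell}\rangle$ has mean $0$, variance $\|y_{B_\ell}\|_2^2$ and fourth moment $\le 3\|y_{B_\ell}\|_2^4$, so by the Paley-Zygmund inequality and symmetry $\Pro\big(X_\ell\ge\tfrac12\|y_{B_\ell}\|_2\big)\ge p_0$ for an absolute $p_0>0$. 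As the $X_\ell$ are independent, all these events occur at once with probability $\ge p_0^{\,k}\ge\e^{-(\alpha^2+1)\log(1/p_0)}$, and on that event
\[
\langle\epsilon,y\rangle=\sum_{\ell=1}^{k}X_\ell\ \ge\ \tfrac12\sum_{\ell=1}^{k}\|y_{B_\ell}\|_2\ =\ \tfrac12\|y\|_{k,\dagger}\ \gtrsim\ \alpha\|y\|_*^{(\alpha)},
\]
the last comparison being \Cref{lemma_norm_alpha_star} (up to rounding $\alpha^2$ to an integer, which I would absorb using monotonicity of $\alpha\mapsto\|\cdot\|^{(\alpha)}$ and refinement of partitions). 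Passing from $|\langle\epsilon,y\rangle|$ to $\langle\epsilon,y\rangle$ costs a factor $\tfrac12$ by symmetry, and picking $c$ large enough that $\tfrac{\alpha}{c}\|y\|_*^{(\alpha)}$ lies strictly below the bound in the display (any $c>4$ works) yields $\Pro\big(\langle\epsilon,y\rangle>\tfrac{\alpha}{c}\|y\|_*^{(\alpha)}\big)\ge c^{-1}\e^{-c\alpha^2}$; the explicit value $c=4\log12$ is merely the one under which all constants fit and may be quoted from \cite{MS90}.

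\emph{Main obstacle.} The substance is entirely in the lower bound, and the reason it stays elementary is that, once one passes to the optimal $k$-partition provided by \Cref{lemma_norm_alpha_star}, only \emph{constant-level} anti-concentration of a Rademacher sum per block is needed, for which Paley-Zygmund suffices; the factor $\e^{-c\alpha^2}$ is then bought cheaply by forcing a favourable sign pattern in each of the $k\approx\alpha^2$ blocks, so one never meets the delicate large-deviation lower tail that is the hard core of Montgomery-Smith's original argument. What is left --- non-integer $\alpha^2$, possibly empty blocks, and pinning down $c=4\log12$ --- is bookkeeping, and one may alternatively just cite the nontrivial half of \cite{MS90} verbatim.
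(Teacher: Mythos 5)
The paper offers no proof of this statement at all: it is imported verbatim (in the dual-norm formulation of \Cref{lemma_norm_alpha_star}) from Montgomery-Smith's paper \cite{MS90}, so there is no in-paper argument to compare yours against; I can only judge your reconstruction on its own merits, and it is essentially sound. Your upper bound is the standard argument and is correct: splitting over $T$ and $T^c$ from \eqref{eq_def_dualclipped}, using $\langle \epsilon, y_{T^c}\rangle \le \|y_{T^c}\|_1$ deterministically and Hoeffding on the block $T$ gives exactly $\e^{-\alpha^2/2}$, including the degenerate case $\|y_T\|_2=0$. Your lower bound via an optimal partition for $\|\cdot\|_{k,\dagger}$, Paley--Zygmund on each block, and independence across blocks is a legitimate and genuinely more elementary route than the original large-deviation analysis, and the rounding issue you wave at does close: with $k=\lceil\alpha^2\rceil$ and $\beta=\sqrt{k}\ge\alpha$ one has $\|y\|_{k,\dagger}\ge\tfrac{\beta}{\sqrt 2}\|y\|_*^{(\beta)}\ge\tfrac{\beta}{\sqrt2}\cdot\tfrac{\alpha}{\beta}\|y\|_*^{(\alpha)}=\tfrac{\alpha}{\sqrt2}\|y\|_*^{(\alpha)}$, since $\|z\|^{(\beta)}\le\tfrac{\beta}{\alpha}\|z\|^{(\alpha)}$ dualizes to $\|y\|_*^{(\beta)}\ge\tfrac{\alpha}{\beta}\|y\|_*^{(\alpha)}$. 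Two caveats. First, your argument proves the lemma only with \emph{some} absolute constant, not with $c=4\log(12)$: Paley--Zygmund with fourth moment $3\|y_{B_\ell}\|_2^4$ gives per-block success probability $p_0=3/32$, and since the claimed bound at $\alpha\to 0$ tends to $c^{-1}=(4\log 12)^{-1}\approx 0.1007>3/32$, your constants force $c\ge 32/3>4\log(12)$ (the threshold condition $c\ge 2\sqrt2$ is the easy part, not ``any $c>4$'' being the binding constraint). This matters mildly because the paper's Lemma \ref{lemma_lowerbound_heavytailed} uses the numerical value ($r=(4\log 12)^{-1}$, hence $c_2\approx 5220$), so for the stated value one must indeed cite \cite{MS90}, as you do; this is acceptable for a quoted lemma but should be said explicitly rather than framed as bookkeeping. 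Second, the remark about losing a factor $\tfrac12$ when passing from $|\langle\epsilon,y\rangle|$ to $\langle\epsilon,y\rangle$ is superfluous: your event already forces every block sum to be nonnegative, so the signed sum is bounded below directly.
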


\begin{lemma} \label{lemma_lowerbound_heavytailed}
Assume that $b=(b_1,\ldots,b_m)$ is a random vector in $\R^m$ with i.i.d. entries distributed as a symmetric, unit variance random variable $X$ that is super-Gaussian with parameter $\sigma$.

Then there exist absolute constants $c_1,c_2 > 0$ such that
\[
\Pro\big( | \langle b, w\rangle | > u \big) > c_1 \e^{-\frac{c_2 u^2}{2 \sigma^2}}
\]
for all $u \geq \frac{\sigma}{4}$ and all $w \in \R^m$ such that $\|w\|_2 = 1$.
\end{lemma}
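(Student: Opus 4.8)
The plan is to reduce the statement to a Gaussian tail estimate. Concretely, I would show that
\[
\Pro\big(|\langle b,w\rangle|>u\big)\ \geq\ \tfrac12\,\Pro\big(|\sigma\langle g,w\rangle|>u\big)\ =\ \Pro\big(g_1>u/\sigma\big),
\]
where $g=(g_1,\dots,g_m)$ is a standard Gaussian vector (using $\|w\|_2=1$, so that $\sigma\langle g,w\rangle\sim\mathcal N(0,\sigma^2)$), and then finish with the elementary bound $\Pro(g_1>t)\geq c_1\e^{-c_2 t^2/2}$, valid with absolute constants $c_1,c_2$ for $t\geq 1/4$. The hypothesis $u\geq\sigma/4$ enters exactly here: it forces $t=u/\sigma\geq 1/4$, the range in which the Gaussian lower tail has $\sigma$-independent constants.

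To obtain the key inequality I would use the super-Gaussian assumption \cref{eq_assumption_supergaussian_lemma} in the form ``$|X|$ stochastically dominates $\sigma|g_1|$''. By the quantile coupling one can realize, on a common probability space and independently over $i\in[m]$, a Rademacher sequence $\epsilon=(\epsilon_i)$, a standard Gaussian vector $g$, and $b$, with $b_i=\epsilon_i|b_i|$, $g_i=\epsilon_i|g_i|$ and $|b_i|\geq\sigma|g_i|$ almost surely (symmetry of $X$ is what makes $\epsilon_i|b_i|$ have the right law, and $\Pro(X=0)=0$ follows from super-Gaussianity). Conditioning on the magnitudes $(|b_i|,|g_i|)_i$ leaves only $\epsilon$ random, and $\langle b,w\rangle=\langle\epsilon,v\rangle$, $\sigma\langle g,w\rangle=\langle\epsilon,\widetilde v\rangle$ with $v_i=w_i|b_i|$ and $\widetilde v_i=\sigma w_i|g_i|$; here $|v_i|\geq|\widetilde v_i|$ and $\sign(v_i)=\sign(\widetilde v_i)$, and absorbing the signs of $w$ into $\epsilon$ we may assume $v_i\geq\widetilde v_i\geq 0$.

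So everything reduces to a comparison for Rademacher sums: if $v_i\geq\widetilde v_i\geq 0$ for all $i$, then $\Pro_\epsilon(|\langle\epsilon,v\rangle|>u)\geq\tfrac12\,\Pro_\epsilon(|\langle\epsilon,\widetilde v\rangle|>u)$. The naive ``larger coefficients give a heavier tail at every level'' is \emph{false}, so I would argue by positive correlation. Put $S=\langle\epsilon,\widetilde v\rangle$ and $D=\langle\epsilon,v-\widetilde v\rangle$, so $\langle\epsilon,v\rangle=S+D$; both are non-decreasing functions of $\epsilon\in\{-1,1\}^m$ for the coordinatewise order (because $\widetilde v$ and $v-\widetilde v$ have non-negative entries), and $D$ is symmetric so $\Pro(D\geq 0)\geq 1/2$. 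By Harris's inequality (the FKG inequality for product measures) applied to the increasing events $\{S>u\}$ and $\{D\geq 0\}$, $\Pro(S>u,\,D\geq 0)\geq\Pro(S>u)\Pro(D\geq 0)\geq\tfrac12\Pro(S>u)$. Since $\{S>u,\,D\geq 0\}\subset\{S+D>u\}$ and, using that $-\epsilon$ has the same law as $\epsilon$, $\Pro(S<-u,\,D\leq 0)=\Pro(S>u,\,D\geq 0)$ with $\{S<-u,\,D\leq 0\}\subset\{S+D<-u\}$, adding these two disjoint events gives $\Pro(|S+D|>u)\geq 2\Pro(S>u,\,D\geq 0)\geq\Pro(S>u)=\tfrac12\Pro(|S|>u)$. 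Taking expectations over the magnitudes yields the key inequality, and the Gaussian tail bound of the first paragraph concludes.

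The main obstacle is precisely this Rademacher comparison: monotonicity in the coefficients fails pointwise in $u$, and the repair is the FKG step, which hinges on $v-\widetilde v$ having the \emph{same signs} as $\widetilde v$ --- guaranteed by the sign-matched coupling $\sign(b_i)=\sign(g_i)$. An alternative, with much larger constants, replaces this step by Montgomery-Smith's lower tail estimate for Rademacher sums (\Cref{lemma_MontgomerySmith}): conditionally on the magnitudes one picks $\alpha\asymp u/\|v\|_2$, bounds the dual clipped norm $\|v\|_*^{(\alpha)}$ from below via \cref{eq_def_dualclipped} and \Cref{lemma_norm_alpha_star}, obtaining a conditional bound of order $\e^{-\Theta(u^2/\|v\|_2^2)}$, and then lower-bounds $\Pro(\|v\|_2\gtrsim\sigma,\ \|v\|_1\gtrsim u)$ using the super-Gaussian tail; the worst case $w=e_1$ there produces a factor $\e^{-\Theta(u^2/\sigma^2)}$, which is where the large absolute constant $c_2$ originates.
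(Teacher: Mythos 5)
Your main route is correct and genuinely different from the paper's. The paper also conditions on the magnitudes and uses the a.s.\ coupling $|b_i|\geq\sigma|g_i|$, but it then invokes Montgomery-Smith's two-sided estimates for Rademacher sums (\Cref{lemma_MontgomerySmith}) through the dual clipped norm, and establishes the key step --- a lower bound on $\Pro\big(\sigma r t f\,\||g|\odot|w|\|_*^{(tf)}\geq t\big)$ --- by contradiction against a Gaussian integral; this yields $c_1=\tfrac{1}{4\log 12}$ and $c_2\approx 5220$. You instead match signs in the coupling, absorb the signs of $w$ into the Rademacher vector, and reduce to the clean comparison ``if $v_i\geq\widetilde v_i\geq 0$ then $\Pro(|\langle\epsilon,v\rangle|>u)\geq\tfrac12\Pro(|\langle\epsilon,\widetilde v\rangle|>u)$'', proved via Harris/FKG for the increasing events $\{S>u\}$ and $\{D\geq 0\}$ together with the symmetry of $S$ and $D$; taking expectations gives $\Pro(|\langle b,w\rangle|>u)\geq\Pro(g_1>u/\sigma)$ and an elementary Gaussian lower tail finishes for $u/\sigma\geq 1/4$. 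I checked the delicate points: the quantile coupling is legitimate under \cref{eq_assumption_supergaussian_lemma}, the sign absorption preserves the joint conditional law, both events in the FKG step are increasing for the product measure on $\{-1,1\}^m$, $\Pro(D\geq 0)\geq\tfrac12$ by symmetry, and the two reflected events are disjoint, so the factor $2$ is available; you also rightly note that the naive entrywise monotonicity of tails fails, which is exactly why the factor $\tfrac12$ and the FKG argument are needed. What your route buys is a shorter proof bypassing Montgomery-Smith entirely and absolute constants that are dramatically better (one can take roughly $c_1\approx 0.1$, $c_2\approx 5$), which would propagate into smaller constants in \Cref{lemma_lowerbd_Q}(2) and \Cref{theorem_QP}(b); what the paper's route buys is that it stays within the clipped-norm/Rademacher machinery already set up for part (a), and indeed your sketched ``alternative with much larger constants'' is essentially the paper's argument.
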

\begin{proof}
Let $t > 0$. Due to symmetry of the random variables $b_i$, $i \in [m]$, 
we can write $b_i = \epsilon'_i |b_i|$, where $\epsilon' = (\epsilon'_i)$ is a Rademacher vector independent of
$(|b_i|)$.
By conditioning on $\{|b_i|\}_{i=1}^m$, we obtain 
\begin{equation*}
\begin{split}
\Pro\big( | \langle b, w\rangle | > t \big) &= \E\left[ \Pro\Big( \Big| \sum_{i=1}^m \epsilon'_i |b_i| w_i \Big| > t  \Big\vert \{|b_i|\}_{i=1}^m\Big)  \right] \\
&= \E\left[ \Pro\Big( \Big| \sum_{i=1}^m \epsilon'_i \sign w_i |b_i| |w_i| \Big| > t  \Big\vert \{|b_i|\}_{i=1}^m\Big)  \right] \\
&= \E\left[ \Pro\Big( \Big| \sum_{i=1}^m \epsilon_i |b_i| |w_i| \Big| > t  \Big\vert \{|b_i|\}_{i=1}^m\Big)  \right],
\end{split}
\end{equation*}
where $\epsilon = (\epsilon_i)_{i \in [m]}$ is again a Rademacher vector independent of $(|b_i|)_{i \in [m]}$. 
For $r,t, f > 0$, we define the events 
\[
\mathcal{E}_{r,t,f}^{|b|} := \left\{ tf \| |b|\odot |w|\|_*^{(tf)} \geq \frac{t}{r}  \right\} \quad \mbox{ and } \quad \mathcal{E}_{r,t,f}^{|g|} := \left\{ tf\| |g|\odot |w|\|_*^{(tf)} \geq \frac{t}{r}  \right\}
\]
using the dual of the clipped norm of \cref{eq_def_dualclipped}, where $g = (g_1,\ldots,g_m)$ is a vector of standard normal i.i.d.\ entries and $ |b|\odot |w| = ( |b_i| |w_i|)_{i=1}^m$ the entrywise product of the vectors $|b|$ and $|w|$. Then
\begin{equation*}
\begin{split}
&\Pro\big( | \langle b, w\rangle | > t \big) \\
&= \E\left[ \Pro\Big( \Big| \sum_{i=1}^m \epsilon_i |b_i| |w_i| \Big| > t  \Big\vert \mathcal{E}_{r,t,f}^{|b|}, \{|b_i|\}_{i=1}^m\Big) \Pro(\mathcal{E}_{r,t,f}^{|b|} \mid \{|b_i|\}_{i=1}^m) \Big\vert \{|b_i|\}_{i=1}^m\Big)  \right]\\
&+ \E\left[ \Pro\Big( \Big| \sum_{i=1}^m \epsilon_i |b_i| |w_i| \Big| > t  \Big\vert \big(\mathcal{E}_{r,t,f}^{|b|}\big)^c ,\{|b_i|\}_{i=1}^m\Big) \Pro(\big(\mathcal{E}_{r,t,f}^{|b|}\big)^c \mid \{|b_i|\}_{i=1}^m) \Big\vert \{|b_i|\}_{i=1}^m\Big)  \right] \\
&\geq \E\left[ \E\Big[\Pro\Big( \Big| \sum_{i=1}^m \epsilon_i |b_i| |w_i| \Big| > t  \Big\vert \mathcal{E}_{r,t,f}^{|b|} , \{|b_i|\}_{i=1}^m\Big)\cdot \mathds{1}_{\mathcal{E}_{r,t,f}^{|b|}} \Big\vert \{|b_i|\}_{i=1}^m\Big]  \right] \\
&\geq \E\left[ \E\Big[\Pro\Big( \Big| \sum_{i=1}^m \epsilon_i |b_i| |w_i| \Big| > r tf \| |b|\odot |w|\|_*^{(tf)}  \Big\vert \mathcal{E}_{r,t,f}^{|b|} , \{|b_i|\}_{i=1}^m\Big)\cdot \mathds{1}_{\mathcal{E}_{r,t,f}^{|b|}} \Big\vert \{|b_i|\}_{i=1}^m\Big]  \right] 
\end{split}
\end{equation*}
It follows from \Cref{lemma_MontgomerySmith} that if we choose $r= (4 \log(12))^{-1}$, then
\begin{equation} \label{eq_lemma_lower_heavytailed_0}
\begin{split}
\Pro\big( | \langle b, w\rangle | > t \big)  &\geq r \e^{-\frac{t^2 f^2}{r}} \E\left[\E\big[\mathds{1}_{\mathcal{E}_{r,t,f}^{|b|}} \big\vert \{|b_i|\}_{i=1}^m\big]  \right]  \\
&= r \e^{-\frac{t^2 f^2}{r}} \Pro\big(  r tf \| |b|\odot |w|\|_*^{(tf)} \geq t \big).
\end{split}
\end{equation}
Since the $b_i$ are independent, symmetric, unit variance random variables that fulfill the super-Gaussian assumption \cref{eq_assumption_supergaussian_lemma} for all $i \in [m]$, we know that there exists a constant $\sigma > 0$ and independent standard normal random variables $(g_i)_{i \in [m]}$ that depend on $b_i$ for $i \in [m]$, such that $|b_i| \geq |\sigma g_i|$ almost surely.

Therefore, since $|w_i| \geq 0$ for all $i \in [m]$, it holds that
\[
\Pro\big(  r tf \| |b|\odot |w|\|_*^{(tf)} \geq t \big) \geq \Pro\big(  \sigma r tf  \| |g| \odot |w|\|_*^{(tf)} \geq t \big)
\]
with the random vector $|g| = (|g_i|)_{i \in [m]}$.
%
%
Next, we claim that 
\begin{equation} \label{eq_lemma_lower_heavytailed_1}
\Pro(\mathcal{E}_{\sigma r,t,f}^{|g|}) = \Pro\big(  \sigma r tf \| |g| \odot |w|\|_*^{(tf)} \geq t \big) \geq \e^{-\frac{125 t^2}{\sigma^2}} \quad \text{ for all } t \geq t_0 := 1/4.
\end{equation}
On the contrary, suppose that this statement does not hold, i.e., that there exists some $t \geq \frac{1}{4}$ such that 
\begin{equation} \label{eq_lemma_lower_heavytailed_2}
\Pro\big( \sigma r tf \| |g| \odot |w|\|_*^{(tf)} \geq t \big) < \e^{-\frac{125 t^2}{\sigma^2}}.
\end{equation}
Due to the first statement of \Cref{lemma_MontgomerySmith} and since the distributions of $\langle g, |w| \rangle$ and $\sum_{i=1}^m \epsilon_i |g_i| |w_i|$ coincide if $(\epsilon_i)_{i \in [m]}$ is a Rademacher vector which is independent
of $(|g_i|)_{i \in [m]}$,
we see that
\begin{equation} \label{eq_lemma_lower_heavytailed_3}
\begin{split}
&\Pro\big( | \langle g, |w| \rangle | > tf\||g| \odot |w|\|_*^{(tf)} \big\vert  (\mathcal{E}_{\sigma r,t,f}^{|g|})^{c}\big) \\
&= \E\big[\Pro\big( \Big| \sum_{i=1}^m \epsilon_i |g_i| |w_i| \Big| > tf\| |g| \odot |w| \|_*^{(tf)} \big\vert  (\mathcal{E}_{\sigma r,t,f}^{|g|})^{c} ,\{|g_i|\}_{i=1}^m\big) \big\vert (\mathcal{E}_{\sigma r,t,f}^{|g|})^{c}\big] \leq \e^{-\frac{t^2f^2}{2}}.
\end{split}
\end{equation}
Furthermore, by the conditioning on the event $(\mathcal{E}_{\sigma r,t,f}^{|g|})^{c} = \left\{ tf\| |g| w\|_*^{(tf)} < \frac{t}{\sigma r}  \right\}$, it follows that
\[
\begin{split}
&\Pro\big( | \langle g, |w| \rangle | > tf\||g| \odot |w|\|_*^{(tf)} \big\vert  (\mathcal{E}_{\sigma r,t,f}^{|g|})^{c}\big) \geq \Pro\big( | \langle g, |w| \rangle | > \frac{t}{\sigma r}\; \big\vert  (\mathcal{E}_{\sigma r,t,f}^{|g|})^{c}\big) \\
&= \Pro((\mathcal{E}_{\sigma r,t,f}^{|g|})^{c})^{-1} \Big[ \Pro\Big( | \langle g, |w| \rangle | > \frac{t}{\sigma r}\Big) - \Pro \big( \Big\{ | \langle g, |w| \rangle | > \frac{t}{\sigma r}\Big\} \cap \mathcal{E}_{\sigma r,t,f}^{|g|}  \big) \Big] \\
&\geq \Pro((\mathcal{E}_{\sigma r,t,f}^{|g|})^{c})^{-1} \Big[ \Pro\Big( | \langle g, |w| \rangle | > \frac{t}{\sigma r}\Big) - \Pro \big( \mathcal{E}_{\sigma r,t,f}^{|g|}  \big) \Big] \\
&\geq \Big[ \Pro\Big( | \langle g, |w| \rangle | > \frac{t}{\sigma r}\Big) - \Pro \big( \mathcal{E}_{\sigma r,t,f}^{|g|}  \big) \Big] \geq  \e^{-\frac{112 t^2}{\sigma^2}} - \Pro \big( \mathcal{E}_{\sigma r,t,f}^{|g|}  \big)
\end{split}
\]
for all $t \geq \frac{1}{4}$, where we use the lower bound of the Gaussian integral 
\[
\begin{split}
&\Pro\Big( | \langle g, |w| \rangle | > \frac{t}{\sigma r}\Big) = \Pro\Big( | g_1 | > \frac{t}{\sigma r}\Big) \\
&
\geq \sqrt{\frac{2}{\pi}}\int_{\frac{t}{\sigma r}}^{\frac{(C+1)t}{\sigma r}} \e^{-x^2/2}\mathrm{d}x 
\geq  \sqrt{\frac{2}{\pi}} \frac{C t}{\sigma r} \e^{-\frac{(C+1)^2 t^2}{2 \sigma^2 r^2}} \geq \e^{-\frac{112 t^2}{\sigma^2}}
\end{split}
\]
in the last inequality, which is true for all $t \geq \frac{1}{4}$ if $r= (4 \log(12))^{-1}$ and $C = 4 \sigma r \sqrt{\frac{\pi}{2}}$. A combination with \cref{eq_lemma_lower_heavytailed_2} and \cref{eq_lemma_lower_heavytailed_3}
yields
\[
\e^{-\frac{t^2f^2}{2}} > \e^{-\frac{112 t^2}{\sigma^2}} -  \e^{-\frac{125 t^2}{\sigma^2}}.
\]
Finally, this leads to a contradiction for $f=\frac{\sqrt{250}}{\sigma}$, $r = (4 \log(12))^{-1}$ if $t \geq \frac{\sigma}{4}$, since then the right hand side is strictly larger than $\e^{-\frac{t^2f^2}{2}}=\e^{-\frac{125 t^2}{\sigma^2}}$. This shows statement \cref{eq_lemma_lower_heavytailed_1}.

Inserting \cref{eq_lemma_lower_heavytailed_1} into \cref{eq_lemma_lower_heavytailed_0}, we see that
\[
\Pro\big( | \langle b, w\rangle | > t \big) \geq r \e^{-\frac{t^2 f^2}{r}} \e^{-\frac{125 t^2}{\sigma^2}} = r \e^{-\frac{t^2}{2}(2\cdot 250 \frac{1}{\sigma^2 r}+\frac{2\cdot 125}{\sigma^2})}
\]
for all $r \geq \sigma /4$, which concludes the assertion of the lemma with the constants $u_0 = \sigma/4$, $c_1 = \frac{1}{4 \log(12)} \approx \frac{1}{10}$ and 
$c_2 = 2000 \log(12)+ 250 \approx 5220$.
\end{proof}

We note that the proof of the first part of the following lemma is similar to \cite[Lemma 4.3]{Litvak05}.

\begin{lemma} \label{lemma_lowerbd_Q}
Assume that $b$ is a random vector in $\R^m$ with i.i.d. entries distributed as a symmetric, unit variance random variable $X$.
\begin{enumerate}
\item If $X$ fulfills the weak moment assumption of order $4$ with constants $\gamma$ and $\kappa_1$, then  
\[
Q_{S^{\|\cdot \|_*^{(\beta)}}}\Big(\frac{1}{4}\sqrt{\frac{\log(\e N/m)}{\log(\e C)}}\Big) := \inf_{w \in S^{\|\cdot \|_*^{(\beta)}}} \mathbb{P}\Big(| \langle b,w \rangle | \geq \frac{1}{4} \sqrt{\frac{\log(\e N/m)}{\log(\e C)}}\Big) \geq 64 \sqrt{\frac{m}{N}} 
\]
for $\beta = \sqrt{\log\left(\frac{\e N}{m}\right)}/\sqrt{\log(\e C)}$ if $N \geq C m$, where $C:=\frac{4^{5+8 \gamma}}{3^4 \e} \kappa_1^8$.   
\item If $X$ is super-Gaussian with parameter $0 < \sigma \leq 1$,
then 
\[
 Q_{S^{\|\cdot \|_*^{(1)}}}\Big(\frac{1}{4}\sqrt{\frac{\log(\e N/m)}{\log(\e C)}}\Big) \geq 64 \sqrt{\frac{m}{N}}
\]
if $N \geq C m$ with $C :=64^2 c_1^{-2} \e^{\frac{c_2}{16 \sigma^2}}$, where $c_1$ and $c_2$ are the constants from Lemma \ref{lemma_lowerbound_heavytailed}.
\end{enumerate}
\end{lemma}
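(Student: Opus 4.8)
The plan is to treat both parts through a common reduction. Write $L:=\log(\e N/m)$ and recall that $N\ge Cm$ is equivalent to $L\ge\log(\e C)$, hence to $\beta:=\sqrt{L/\log(\e C)}\ge 1$; the threshold in both statements is $u=\tfrac14\sqrt{L/\log(\e C)}=\beta/4$, and the quantity to beat is $64\sqrt{m/N}=64\,\e^{-(L-1)/2}=64\,\e^{-(\beta^2\log(\e C)-1)/2}$, which is exponentially small in $\beta^2$. So it suffices, for a fixed $w$ on the unit sphere of the relevant dual norm, to establish a pointwise bound of the form $\Pro(|\langle b,w\rangle|\ge\beta/4)\ge\widetilde c_1\,\e^{-\widetilde c_2\beta^2}$ with $\widetilde c_1,\widetilde c_2>0$ independent of $N,m$, and then to choose $\log(\e C)$ large enough that $\widetilde c_1\e^{-\widetilde c_2\beta^2}\ge 64\,\e^{-(\beta^2\log(\e C)-1)/2}$ for all $\beta\ge1$. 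Taking logarithms, the latter is affine in $\beta^2$, so it is enough to check it at $\beta=1$ together with the slope condition $\widetilde c_2\le\tfrac12\log(\e C)$; the equality $\log(\e C)=1+2\widetilde c_2+2\log(64/\widetilde c_1)$ determines $C$, and the slope condition then holds automatically since $\widetilde c_1\le1$.

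The second part (super-Gaussian $X$) is immediate from this scheme. Here $\beta=1$, so as noted after Lemma~\ref{lemma_norm_alpha_star} the clipped norm and its dual both reduce to $\|\cdot\|_2$ and $S^{\|\cdot\|_*^{(1)}}=S^{\|\cdot\|_2}$. For $\|w\|_2=1$, the hypothesis $u\ge\sigma/4$ of Lemma~\ref{lemma_lowerbound_heavytailed} holds because $N\ge Cm$ forces $u=\beta/4\ge\tfrac14\ge\tfrac\sigma4$ (using $\sigma\le1$), so that lemma gives $\Pro(|\langle b,w\rangle|\ge u)\ge c_1\,\e^{-c_2u^2/(2\sigma^2)}=c_1\,\e^{-(c_2/(32\sigma^2))\,\beta^2}$; thus $\widetilde c_1=c_1$, $\widetilde c_2=c_2/(32\sigma^2)$. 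The boundary equation then reads $\log(\e C)=1+c_2/(16\sigma^2)+2\log(64/c_1)$, i.e.\ $C=64^2c_1^{-2}\,\e^{c_2/(16\sigma^2)}$, exactly the stated constant.

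The first part is the substantial one: $\beta$ may be large while only $\|X\|_{L_4}\le\kappa_1 4^{\gamma}$ is available, so a bare Paley--Zygmund estimate for $\langle b,w\rangle$ reaches only the scale $\|w\|_2\asymp1$, far short of $\beta/4\asymp\sqrt L$. I would mirror the proof of Lemma~\ref{lemma_lowerbound_heavytailed} (cf.\ also \cite[Lemma~4.3]{Litvak05}), replacing its super-Gaussian minorization by second-moment input. By symmetry write $b_i=\epsilon_i|b_i|$, so that conditionally on $(|b_i|)_i$ one has $\langle b,w\rangle\overset{d}{=}\langle\epsilon,v\rangle$ with $v:=(|b_i||w_i|)_i$, and apply the lower Montgomery--Smith bound of Lemma~\ref{lemma_MontgomerySmith} with parameter $\asymp\beta$ to get $\Pro(|\langle b,w\rangle|>\tfrac{\beta}{c}\|v\|_*^{(\beta)} \mid (|b_i|)_i)\ge c^{-1}\e^{-c\beta^2}$. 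It then remains to show that the randomly reweighted dual clipped norm is not much smaller than the original one, i.e.\ $\|v\|_*^{(\beta)}\gtrsim\|w\|_*^{(\beta)}=1$ with probability bounded below by a constant depending only on $\kappa_1,\gamma$. For this I would use the formula $\|w\|_*^{(\beta)}=\|w_T\|_2+\tfrac1\beta\|w_{T^c}\|_1$ with $T^c=\{i:|w_i|>\|w\|_2/\beta\}$ (hence $|T^c|\le\beta^2$), splitting according to whether the "spread" part $w_T$ or the "spiky" part $w_{T^c}$ carries at least half of the unit mass: in the first case the $\ell_2$-contribution of $v_T$ survives because $\Pro(|b_i|\ge\tfrac12)\ge\tfrac{9}{16}(\kappa_1 4^{\gamma})^{-4}$ by Paley--Zygmund applied to $X^2$; in the second case either a single dominating coordinate of $b$ survives (handled directly via a lower bound on $\Pro(|X|\ge\tfrac14)$) or the mass $\sum_{i\in T^c}|b_i||w_i|$ stays $\gtrsim\beta$ by a second-moment argument using $\E|X|\ge(\kappa_1 4^{\gamma})^{-2}$. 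Collecting the cases yields a bound $\widetilde c_1\e^{-\widetilde c_2\beta^2}$ in which $\widetilde c_2$ is essentially the absolute Montgomery--Smith constant while $\widetilde c_1$ is an absolute multiple of $(\kappa_1 4^{\gamma})^{-4}$; plugging these into the boundary reduction and solving for $C$ produces $C=\tfrac{4^{5+8\gamma}}{3^4\e}\kappa_1^8$.

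The main obstacle will be precisely this uniform lower bound on $\|v\|_*^{(\beta)}$ over the whole dual clipped sphere --- arranging the spread/spiky dichotomy so that the surviving mass comes out as a clean power of $\kappa_1 4^{\gamma}$ while the exponent constant stays (essentially) absolute, which is what the precise form of $C$ encodes. One further point deserving separate (and easier) treatment is the regime where $\beta$ is comparable to or larger than $\sqrt m$, i.e.\ $N$ exponential in $m$: there $64\sqrt{m/N}$ is doubly exponentially small in $m$, so a crude large-deviations lower bound for the Rademacher sum $\langle\epsilon,v\rangle$ already suffices and no fine control is needed.
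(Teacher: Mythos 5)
Your treatment of part 2 is correct and essentially the paper's own argument: apply Lemma~\ref{lemma_lowerbound_heavytailed} at the level $u=\tfrac14\sqrt{\log(\e N/m)/\log(\e C)}\ge \sigma/4$ on the $\ell_2$-sphere and balance the resulting $c_1\e^{-c_2u^2/(2\sigma^2)}$ against $64\sqrt{m/N}$ by the choice of $C$ (your ``check at the boundary plus slope'' bookkeeping is fine). The problem is part 1. Your plan is to condition on $(|b_i|)_i$, apply the Montgomery--Smith lower bound (Lemma~\ref{lemma_MontgomerySmith}) at parameter $\asymp\beta$ to the Rademacher sum with weights $v=(|b_i||w_i|)_i$, and then show $\|v\|_*^{(\beta)}\gtrsim \|w\|_*^{(\beta)}=1$ with constant probability. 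Even granting that comparison, this route cannot reach the stated threshold $\tfrac14\sqrt{\log(\e N/m)/\log(\e C)}=\beta/4$: Lemma~\ref{lemma_MontgomerySmith} only yields exceedance of the level $\tfrac{\beta}{c}\|v\|_*^{(\beta)}$ with $c=4\log(12)\approx 9.9$, so you would need $\|v\|_*^{(\beta)}\ge c/4\approx 2.5$, which is false in general -- for Rademacher $X$ one has $|b_i|\equiv 1$, hence $\|v\|_*^{(\beta)}=\|w\|_*^{(\beta)}=1$ almost surely, and the level reached is at most $\beta/c<\beta/4$. This mismatch cannot be absorbed into $C$, since the threshold and the clipped-norm parameter scale together with $\sqrt{\log(\e N/m)/\log(\e C)}$; your scheme would prove a variant of the lemma with a strictly smaller numerical constant in place of $\tfrac14$, and in particular the claim that ``solving for $C$'' reproduces $C=\tfrac{4^{5+8\gamma}}{3^4\e}\kappa_1^8$ is not substantiated. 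In addition, the comparison $\|v\|_*^{(\beta)}\gtrsim 1$ is itself the entire difficulty and your dichotomy leaves real gaps: in the spiky case, the proposed second-moment (Paley--Zygmund) argument for $\sum_{i\in T^c}|b_i||w_i|$ gives a constant success probability only when $\|w_{T^c}\|_2\lesssim \E|X|\,\|w_{T^c}\|_1$, and the complementary configurations (several coordinates of intermediate size between $\|w\|_2/\beta$ and $\beta$) are covered neither by that argument nor by the ``single dominating coordinate'' case; moreover the level it produces is $\asymp\E|X|\,\beta\ge(\kappa_14^\gamma)^{-2}\beta$, again short of $\beta/4$ unless the moment constants are near $1$.

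The paper's proof avoids exactly this loss by never invoking an exponential Rademacher lower bound at scale $\beta$. It uses Lemma~\ref{lemma_norm_alpha_star} to replace $\|w\|_*^{(\beta)}$ (up to $\sqrt2/\beta$) by the $\dagger$-norm with an optimal partition $B_1,\dots,B_{\lfloor\beta^2\rfloor}$ of $[m]$, lower bounds the global event by the intersection of the $\lfloor\beta^2\rfloor$ independent block events $\{\sum_{j\in B_\ell}b_jw_j\ge\tfrac12\|w_{B_\ell}\|_2\}$ (the level per block is only $\tfrac12\|w_{B_\ell}\|_2$ precisely because the target $\beta/4$ has been spread over $\lfloor\beta^2\rfloor$ blocks), bounds each block probability from below by a constant $\tfrac{9}{32\kappa_1^44^{4\gamma}}$ via symmetry and Paley--Zygmund with the fourth moment, and multiplies; the product $(\text{const}(\kappa_1,\gamma))^{\beta^2}$ is then absorbed into the stated $C$. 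If you want to rescue your approach, you would in effect have to prove a Rademacher lower bound with level constant better than $1/(4\log 12)$ for the reweighted vector, and the blockwise Paley--Zygmund argument is precisely such a substitute -- i.e., the missing ingredient is the paper's partition step, not a refinement of your norm-comparison claim.
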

\begin{proof}
For $\beta \geq 1$ and $u > 0$, we aim at finding a lower bound for $Q_{S^{\|\cdot \|_*^{(\beta)}}}(2u)$. Let $w \in \R^m$. It is clear from the definition of the clipped norm that $\alpha_1 \leq \alpha_2$ implies $\|w\|^{(\alpha_1)} \leq \|w\|^{(\alpha_2)}$ for all $w \in \R^m$. It follows by duality that $\alpha_1 \leq \alpha_2$ implies $\|w\|_*^{(\alpha_1)} \geq \|w\|_*^{(\alpha_2)}$. Therefore, using the norm $\| \cdot \|_{\lfloor\beta^2 \rfloor,\dagger}$ from \cref{def_dagger_norm} and the upper inequality of \Cref{lemma_norm_alpha_star}, we estimate that
\begin{equation} \label{eq_lower_bound_1}
\begin{split}
\mathbb{P}\Big(| \langle b,w \rangle | \geq 2u \|w\|_{*}^{(\beta)}\Big) &\geq \mathbb{P}\Big(| \langle b,w \rangle | \geq 2u \|w\|_{*}^{(\sqrt{\left\lfloor \beta^2 \right\rfloor})}\Big) \\
&\geq \mathbb{P}\bigg(| \langle b,w \rangle | \geq  \frac{2^{3/2}u}{\sqrt{\left\lfloor \beta^2 \right\rfloor}}  \| w \|_{\lfloor\beta^2 \rfloor,\dagger}\bigg).
\end{split}
\end{equation}
Let now $B_1,\ldots, B_{\lfloor\beta^2\rfloor}$ be a partition of $[m]$ such that
$\| w \|_{\lfloor\beta^2\rfloor,\dagger} = \sum_{\ell=1}^{\lfloor\beta^2\rfloor} \|w_{B_\ell}\|_2$,
cf.\ \cref{def_dagger_norm}. Then
\begin{equation} \label{eq_lower_bound_2}
\begin{split}
&\mathbb{P}\bigg(| \langle b,w \rangle | \geq  \frac{2^{3/2}u}{\sqrt{\left\lfloor \beta^2 \right\rfloor}}  \| w \|_{\lfloor\beta^2 \rfloor,\dagger}\bigg) \geq \mathbb{P}\bigg(| \langle b,w \rangle | \geq  \frac{2^{3/2}u}{\sqrt{\left\lfloor \beta^2 \right\rfloor}} \sum_{\ell=1}^{\lfloor\beta^2\rfloor} \|w_{B_\ell}\|_2\bigg) \\ 
& \geq \mathbb{P}\bigg( \sum_{j \in B_\ell} b_j w_j \geq \frac{2^{3/2}u}{\sqrt{\left\lfloor \beta^2 \right\rfloor}}  \|w_{B_\ell}\|_2 \text{ for all }\ell \in \big[\lfloor \beta^2 \rfloor\big] \bigg) \\
& = \prod_{\ell =1}^{\lfloor \beta^2 \rfloor}  \mathbb{P}\Big( \sum_{j \in B_\ell} b_j w_j \geq\frac{2^{3/2}u}{\sqrt{\left\lfloor \beta^2 \right\rfloor}}  \|w_{B_\ell}\|_2\Big), 
\end{split}
\end{equation}
where we used that the entries $b_1,\ldots b_m$ of $b$ are independent. Combining this with \cref{eq_lower_bound_1}, we obtain
\begin{equation} \label{eq_lower_bound_3}
\mathbb{P}\Big(| \langle b,w \rangle | \geq 2u \|w\|_{*}^{(\beta)}\Big) =  \prod_{\ell =1}^{\lfloor \beta^2 \rfloor}  \mathbb{P}\Big( \sum_{j \in B_\ell} b_j w_j \geq \frac{2^{3/2}u}{\sqrt{\left\lfloor \beta^2 \right\rfloor}}  \|w_{B_\ell}\|_2\Big).
\end{equation}

To show the first statement of the lemma, we choose $\beta =  \sqrt{\frac{\log\left(\frac{\e N}{m}\right)}{\log(\e C)}}$, where $C \geq 1$,  $u = \frac{1}{8} \sqrt{\log\left(\frac{\e N}{m}\right)}/\sqrt{\log(\e C)}$ and $N \geq C m$. By the Paley-Zygmund inequality, see, e.g.,\ \cite[Lemma 7.17]{FoucartRauhut13}, and using the symmetry of the distribution of the $b_j$, 
we find  the following lower bound for the latter probabilities,
\[
\begin{split}
&\mathbb{P}\Big( \sum_{j \in B_\ell} b_j w_j \geq\frac{2^{3/2}u}{\sqrt{\left\lfloor \beta^2 \right\rfloor}}  \|w_{B_\ell}\|_2\Big)  =  \frac{1}{2} \mathbb{P}\Big( \Big| \sum_{j \in B_\ell} b_j w_j \Big| \geq\frac{2^{3/2}u}{\sqrt{\left\lfloor \beta^2 \right\rfloor}}  \|w_{B_\ell}\|_2\Big) \\
&= \frac{1}{2} \mathbb{P}\Big( \Big| \sum_{j \in B_\ell} b_j w_j \Big| \geq \frac{\sqrt{2}}{4} \frac{\sqrt{ \frac{\log(\e N/m)}{\log(\e C)}}}{\sqrt{\left\lfloor \frac{\log(\e N/m)}{\log(\e C)} \right\rfloor}}  \|w_{B_\ell}\|_2\Big) \geq  \frac{1}{2} \mathbb{P}\Big( \Big| \sum_{j \in B_\ell} b_j w_j \Big| \geq \frac{1}{2} \|w_{B_\ell}\|_2\Big) \\
&\geq \frac{1}{2}  \frac{\big(1-(1/2)^2)\big)^2}{\|X\|_{L_4}^4} \geq \frac{ 9/16}{2 \kappa_1^4 4^{4 \gamma}} = \frac{9}{32 \kappa_1^4 4^{4 \gamma}},
\end{split}
\]
where we used that $\frac{x}{\lfloor x \rfloor} \leq 2$ for all $x > 0$ in the second inequality. Combining this lower bound with \cref{eq_lower_bound_3}, we obtain
\[
\begin{split}
&\mathbb{P}\Big(| \langle b,w \rangle | \geq \frac{1}{4}  \sqrt{\frac{\log(\e N/m)}{\log(\e C)}} \|w\|_{*}^{( \sqrt{\frac{\log(\e N /m)}{\log(\e C)}})}\Big) \geq \left(\frac{9}{32 \kappa_1^4 4^{4 \gamma}}\right)^{\lfloor\frac{\log(\e N/m)}{\log(\e C)}\rfloor} \\
&\geq \left(\frac{9}{32 \kappa_1^4 4^{4 \gamma}}\right)^{\frac{\log(\e N/m)}{\log(\e C)}} \geq \frac{1}{64} \sqrt{\frac{m}{N}}
\end{split}
\]
for $N \geq C m$ with $C = \frac{32^2}{\e^1 9^2} ( \kappa_1^4 4^{4 \gamma})^2 = \frac{4^{5+8 \gamma}}{3^4 \e} \kappa_1^8$, which concludes the first part of the lemma.  

The second statement follows from Lemma \ref{lemma_lowerbound_heavytailed} and can be considered as the special case of the above strategy for $\beta =1$: If $\beta =1$ and if $X$ fulfills the super-Gaussian assumption with parameter $\sigma$, then
\[
\begin{split}
&\mathbb{P}\Big(| \langle b,w \rangle | \geq \frac{1}{4}  \sqrt{\frac{\log(\e N/m)}{\log(\e C)}} \|w\|_{*}^{(1)}\Big)= \mathbb{P}\Big(| \langle b,w \rangle | \geq \frac{1}{4}  \sqrt{\frac{\log(\e N/m)}{\log(\e C)}} \|w\|_{2}\Big) \\
&\geq c_1 \left( \frac{\e N}{m} \right)^{-\frac{c_2}{32 \sigma^2 \log (\e C)}} \geq 64 \sqrt{\frac{m}{N}}
\end{split}
\]
if $N \geq C m$ with $C :=64^2 c_1^{-2} \e^{\frac{c_2}{16 \sigma^2}}$, where $c_1$ and $c_2$ are the constants from Lemma \ref{lemma_lowerbound_heavytailed}.
%
\end{proof}


\subsection{Proof of Theorem \ref{theorem_QP}}
\begin{proof}
By \Cref{lemma_QP_equivalence}, to prove the first statement of Theorem \ref{theorem_QP}, it suffices to show \cref{eq_qp4} for the norm $\|\cdot\| = \|\cdot\|^{(\alpha)}$, $\alpha=\sqrt{\log(\e N /m)}$ with probability at least $1- 2 \exp(-2m)$. 

Consider the case that $N \geq \max\big\{Cm,\log^{2\gamma -1}(m)\big\}$ with $C=\frac{4^{5+8 \gamma}}{3^4 \e} \kappa_1^8$.
We use Lemma \ref{theoremlemma31} for the columns $a_1, \ldots, a_N$ of $A$, choosing the set $S$ such that
$S = S^{\|\cdot \|_*^{(\beta)}}$ and $\beta = \frac{\sqrt{\log\left(\frac{\e N}{m}\right)}}{\sqrt{\log(\e C)}}$, $q= \log(N)$ and $u= \frac{1}{8}\frac{\sqrt{\log\left(\frac{\e N}{m}\right)}}{\sqrt{\log(\e C)}}$.
Then, further choosing $t = \sqrt{m}$, it follows that with probability at least $1-2\e^{-2m}$,
\begin{equation} \label{eq_proof_QP_smallball}
\inf_{w \in S^{\|\cdot \|_*^{(\beta)}}} \frac{1}{N} \sum_{i=1}^N |\langle a_i, w\rangle|^q \geq \frac{\log\left(\frac{\e N}{m}\right)^{q/2}}{8^q \big(\log(\e C)\big)^{q/2}}  \left(Q_{S^{\|\cdot \|_*^{(\beta)}}} (2u) - \frac{4}{u} \mathcal{R}_N (S^{\|\cdot \|_*^{(\beta)}}) - \sqrt{\frac{m}{N}} \right)
\end{equation}
From \Cref{lemma_lowerbd_Q}, it follows that
\begin{equation}\label{eq_proof_QP_1}
Q_{S^{\|\cdot \|_*^{(\beta)}}} (2u) \geq 64 \sqrt{\frac{m}{N}}.
\end{equation} 
For the complexity term $\frac{4}{u} \mathcal{R}_N (S^{\|\cdot \|_*^{(\beta)}})$, we use \Cref{lemma_upperbd_R} to see that
\begin{equation}\label{eq_proof_QP_2}
\begin{split}
\frac{4}{u} \mathcal{R}_N (S^{\|\cdot \|_*^{(\beta)}}) &\leq 32 \sqrt{\frac{m}{N}}\sqrt{\frac{\log(\e C)}{\log(\e N/m)}} + 32 \e^{2 \gamma} c_0 \kappa_1 \frac{\sqrt{\log(m)}}{\sqrt{N}} \\
&\leq (32+16)\sqrt{\frac{m}{N}} = 48 \sqrt{\frac{m}{N}},
\end{split}
\end{equation}
since $N \geq C m$ and $4 \e^{4 \gamma} c_0^2 \kappa_1^2 \log(m) \leq m$ by assumption.

Finally, inserting \cref{eq_proof_QP_1} and \cref{eq_proof_QP_2} in \cref{eq_proof_QP_smallball}, we obtain that with probability at least $1-2\e^{-2m}$,
\[
\inf_{w \in S^{\|\cdot \|_*^{(\beta)}}} \frac{1}{N} \sum_{i=1}^N |\langle a_i, w\rangle|^q \geq \frac{\left(\log(\frac{\e N}{m})\right)^{q/2}}{8^q \big(\log(\e C)\big)^{q/2}} 15 \sqrt{\frac{m}{N}}
\]
or equivalently,
\[
\begin{split}
\inf_{w \in S^{\|\cdot \|_*^{(\beta)}}} \left(\frac{1}{N} \sum_{i=1}^N |\langle a_i,w \rangle|^q\right)^{1/q}  &\geq \frac{\sqrt{\log(\frac{\e N}{m})}}{8 \sqrt{\log(\e C)}} 15^{1/\log(N)} \left(\frac{m}{N}\right)^{\frac{1}{2 \log(N)}} \\
&\geq \frac{\sqrt{\log(\frac{\e N}{m})}}{8 \sqrt{e} \sqrt{\log(\e C)}},
\end{split}
\]
since $(m/N)^{\frac{1}{2 \log (N)}} = \exp(\frac{\log(m)}{2 \log(N)}-\frac{1}{2}) > \exp(-1/2)$, which shows the assertion relative to the norm $\|\cdot \|^{(\beta)}$ and constant 
\[
D = 8 \e^{1/2} \sqrt{\log(\e C)}=8 \e^{1/2} \sqrt{1+(9+8 \gamma) \log(4) + 8 \log(\kappa_1)}.
\] 
The assertion relative to the norm $\|\cdot \|^{(\alpha)}$ for $\alpha = \sqrt{\log(\e N/m)}$ follows then trivially since $\|w\|^{(\beta)} \leq \|w\|^{(\alpha)}$, as $\beta = \frac{\sqrt{\log\left(\frac{\e N}{m}\right)}}{\sqrt{\log(\e C)}} \leq \sqrt{\log\left(\frac{\e N}{m}\right)} = \alpha$.

Similarly, we show \Cref{theorem_QP}(b). To this end, we apply Lemma \ref{theoremlemma31} for the choice $S = S^{\|\cdot \|_2} = \{w \in \R^m \mid \|w\|_2 = 1\}$, $q = \log(N)$, $u= \frac{1}{8}\frac{\sqrt{\log\left(\frac{\e N}{m}\right)}}{\sqrt{\log(\e C)}}$ and $t = \sqrt{m}$. If $N \geq C m$ with $C: = 64^2 c_1^{-2} \e^{\frac{c_2}{16 \sigma^2}}$, where $\sigma$ is the super-Gaussian parameter of $X$ and $c_1, c_2$ are the constants from \Cref{lemma_lowerbd_Q}, it follows from \Cref{lemma_upperbd_R} and \Cref{lemma_lowerbd_Q} that with probability at least $1 - 2 \e^{-2m}$,
\begin{equation} \label{eq_proof_QP_3}
\inf_{w \in S^{\|\cdot \|_{2}}} \frac{1}{N} \sum_{i=1}^N |\langle a_i, w\rangle|^q \geq  \frac{\log\left(\frac{\e N}{m}\right)^{q/2}}{8^q \big(\log(\e C)\big)^{q/2}} 31 \sqrt{\frac{m}{N}}.
\end{equation}
As above, this implies that
\[
\inf_{w \in S^{\|\cdot \|_{2}}} \left(\frac{1}{N} \sum_{i=1}^N |\langle a_i, w\rangle|^q\right)^{1/q} \geq \frac{\sqrt{\log(\frac{\e N}{m})}}{8 \e^{1/2} \sqrt{\log(\e C)}},
\]
which finishes the proof of \Cref{theorem_QP}(b) by \Cref{lemma_QP_equivalence}, i.e., $\frac{1}{\sqrt{m}}A$ fulfills the $\ell_1$-quotient property with constant 
\[
D=8 \e^{1/2} \sqrt{\log(\e C)} = 8 \e^{1/2} \sqrt{1+ 2 \log(64)+ 2 \log(c_1^{-1}) + \frac{c_2}{16 \sigma^2}}
\]
relative to $\|\cdot\|_2$ on the event of \cref{eq_proof_QP_3}.
\end{proof}
\subsection{Proof of \Cref{theorem_robustnessGuarantees}}
\begin{proof}
We first note that under the assumptions of both \Cref{theorem_robustnessGuarantees}(a) and \Cref{theorem_robustnessGuarantees}(b), it follows from \cite[Corollary V.3]{DLR16} that with probability at least $1- \exp(-c_1 m)$, $A$ fulfills the $\ell_2$-robust null space property of order $\widetilde{c}_2 s_*$ with $s_* = m/\log(\e N/m)$ and some constant $0 < \widetilde{c}_2 < 1$ and constants $\rho = 0.9$, $ \tau > 0$ relative to the $\ell_2$-norm, i.e.,
\begin{equation}\label{NSP-l2}
\|x_S\|_2 \leq \frac{\rho}{s^{1/2}}\|x_{S^c}\|_1 + \tau \|A x\|_2
\end{equation}
for all $x \in \R^m$ and $S \subset [N]$ with $ |S| = s:= \widetilde{c}_2 s_*$ and $S^c = [N] \setminus S$ if $m \geq (\log(N))^{2 \gamma -1}$, where $c_1, \widetilde{c}_2$ and $\tau$ depend on $\kappa_1$ and $\gamma$. We call the event with the latter statement $\mathcal{E}_{\text{NSP}}$.

Now we show the statement of \Cref{theorem_robustnessGuarantees}(b). Conditional on the event $\mathcal{E}_{\text{NSP}}$, it follows from \cite[Theorem II.22]{BA18} that there exist constants $C> 1$, $D> 0$ that depend only on $\kappa_1$ and $\gamma$
such that
\[
\| x - \Delta_{1,\eta}(Ax + w)\|_{p} \leq \frac{C}{s^{1-1/p}} \sigma_s (x)_1+ s_*^{1/p-1/2}
\big(D \eta + C (\mathcal{Q}_{s_*}(A)_1 + \tau) \max\{\|w\|_2 -\eta,0\}\big)
\]
for all $1 \leq p \leq 2$ and $s \leq \widetilde{c}_2 s_*$ with $\mathcal{Q}_{s_*}(A)_1  :=\sup_{w \in \R^m\setminus\{0\}} \min_{u \in \R^N, Au=w}\frac{\sqrt{s_*}\|u\|_1}{\|w\|_2}$. By \Cref{theorem_QP}(b), it follows that there exist constants $\widetilde{C}$ and $\widetilde{D}$ such that if $N \geq \widetilde{C} m$, then with probability at least $1 - 2 \exp(-2m)$, 
\[
\mathcal{Q}_{s_*}(A)_1 \leq \widetilde{D},
\]
and we call this event $\mathcal{E}_{\text{QP-$\ell_2$}}$. The statement of \Cref{theorem_robustnessGuarantees}(b) follows then since there exists a constant $\widetilde{c_1} > 0 $ such that  $\mathcal{E}_{\text{NSP}} \cap \mathcal{E}_{\text{QP-$\ell_2$}}$ occurs with probablity at least $1-3 \exp(-\widetilde{c}_1 m)$, and on this event, the assertion follows with the constant $E:= C \widetilde{D} + \tau$.

To show Theorem \ref{theorem_robustnessGuarantees}(a), we note that in the case $\|w\|_2 \leq \eta$, the statement follows from the classical result \cite[Theorem 4.22]{FoucartRauhut13} on the event $\mathcal{E}_{\text{NSP}}$, since then there exist constants $C' > 1$, $D' > 0$ such that
\[
\| x - \Delta_{1,\eta}(Ax + w)\|_{p} \leq \frac{C'}{s^{1-1/p}} \sigma_s (x)_1+ s_*^{1/p-1/2} D' \eta
\]
for $s = \widetilde{c}_2 s_*$, for all $1 \leq p \leq 2$ and all $x \in \R^N$. Since $s \mapsto \sigma_s(x)_1/s^{1-1/p}$ is non-increasing we may replace $s$ also by a smaller value $s' < s= \widetilde{c}_2 s_*$.

Consider now the case $\|w\|_2 > \eta$ and let $z\in \R^N$ such that $\|Az - w\|_2 \leq \eta$.
It follows again by \cite[Theorem 4.22]{FoucartRauhut13} that on the event $\mathcal{E}_{\text{NSP}}$, for $1 \leq p \leq 2$, there exist constants $C' > 1$, $D' > 0$ such that
\begin{align} 
& \| x - \Delta_{1,\eta}(Ax + w)\|_{p}  = \big\| (x+z)  - \Delta_{1,\eta}\big(A(x+z) + w-Az\big) - z\big\|_{p} \notag \\
&\leq  \big\| (x+z)  - \Delta_{1,\eta}\big(A(x+z) +(w-Az)\big)\big\|_p + \|z\|_{p} \notag\\
&\leq \frac{C'}{s^{1-1/p}} \sigma_s (x+z)_1 + s^{1/p-1/2} D' \eta + \|z\|_{p} \notag \\
&\leq  \frac{C'}{s^{1-1/p}} \sigma_s (x)_1 + s^{1/p-1/2} D' \eta + \frac{C'}{s^{1-1/p}} \|z\|_1+ \|z\|_{p} \notag \\
& \leq \frac{C'}{s^{1-1/p}} \sigma_s (x)_1 + s^{1/p-1/2} D' \eta + C'\big[\frac{\|z\|_1}{s^{1-1/p}} + \|z\|_{p}\big] \notag \\
& \leq \frac{C'}{s^{1-1/p}} \sigma_s (x)_1 + s^{1/p-1/2} D' \eta + C s^{1/p-1/2} \big[\frac{\|z\|_1}{s^{1/2}} + s^{1/2-1/p} \|z\|_{p}\big]. \label{eq_proof_recovery_guarantee_1}
\end{align}
Moreover, if additionally $N \geq \widetilde{C} m$ and $m \geq 4 \e^{4 \gamma} c_0^2 \kappa_1^2 \log(m)$, where $\widetilde{C}$ and $c_0$ are the constants from \Cref{theorem_QP}(a), it follows from this theorem
that with probability at least $1 - 2 \exp(-2 m)$, $A$ fulfills the $\ell_1$-quotient property relative to the norm $\|\cdot\|^{(\sqrt{\log(\e N/m)})}$ with constant $\widetilde{D}$, and we call the corresponding event $\mathcal{E}_{\text{QP-clipped}}$. Consider now on $\mathcal{E}_{\text{QP-clipped}} \cap \mathcal{E}_{\text{NSP}}$, which occurs with probability at least $1-3 \exp(-\widetilde{c}_1 m)$, a vector $\tilde{z}\in \R^N$ such that $A \tilde{z} = w$ and 
\begin{equation} \label{eq_proof_recovery_guarantee_2}
\|\tilde{z}\|_1 s_*^{-1/2} \leq \widetilde{D}  \|w\|^{(\sqrt{\log(e N/m)})},
\end{equation} which exists due to the $\ell_1$-quotient property relative to the norm $\|\cdot\|^{(\sqrt{\log(e N/m)})}$. If $z \in \R^N$ is chosen such that $z:= (1- \eta/\|w\|_2) \tilde{z}$, it holds that
\[
\|Az-w\|_2 = \big\|A\tilde{z} - w - A\tilde{z} \frac{\eta}{\|w\|_2}\big\|_2 = \frac{\eta}{\|w\|_2} \|{-A\tilde{z}}\|_2 = \eta.
\]
Choose $S$ as an index set of $s$ largest absolute coefficients of $\tilde{z}$. 
It is known \cite[Chapter 4.3]{FoucartRauhut13} that the $\ell_2$-null space property \cref{NSP-l2} 
implies the $\ell_p$-null space property for $1 \leq p \leq 2$ in the form 
\[
\|\tilde{z}_S\|_p \leq \frac{\rho}{s^{1-1/p}} \|\tilde{z}_{S^c}\|_p + \tau s^{1/p-1/2} \|A \tilde{z}\|_2.
\]
Together with Stechkin's estimate, see, e.g., \cite[Proposition 2.3]{FoucartRauhut13}, this gives
\begin{equation}  \label{eq_proof_recovery_guarantee_3}
\begin{split}
\|\tilde{z}\|_p & \leq \|\tilde{z}_S\|_p  + \|\tilde{z}_{S^c}\|_p 
\leq \frac{\rho}{s^{1-1/p}} \|\tilde{z}_{S^c}\|_1 + \tau s^{1/p-1/2} \|A \tilde{z}\|_2 + \sigma_s(\tilde{z})_p \\
&\leq \frac{\rho}{s^{1-1/p}} \|\tilde{z}\|_1 + \frac{1}{s^{1-1/p}} \|\tilde{z}\|_1 +  \tau s^{1/p-1/2} \|A \tilde{z}\|_2 
= \frac{1+\rho}{s^{1-1/p}} \|\tilde{z}\|_1 + \tau s^{1/p-1/2} \|w\|_2 \\
& \leq \frac{1+\rho}{s^{1-1/p}} \|\tilde{z}\|_1 + \tau s^{1/p - 1/2} \|w\|^{(\sqrt{\log(e N/m)})}.
\end{split}
\end{equation}
Using \cref{eq_proof_recovery_guarantee_1} and $s = \widetilde{c}_2 s_*$ we obtain
\begin{align*}
&\| x - \Delta_{1,\eta}(Ax + w)\|_{p} \\
&\leq \frac{C'}{s^{1-1/p}} \sigma_s (x)_1 + (\widetilde{c}_2 s_*)^{1/p-1/2}
\big[ D' \eta + C' \big( \frac{\|\tilde{z}\|_1}{(\widetilde{c}_2 s_*)^{1/2}} + (\widetilde{c}_2 s_*)^{1/2-1/p} \|\tilde{z}\|_{p}\big) (1- \frac{\eta}{\|w\|_2})\big] \\
&\leq \frac{C'}{s^{1-1/p}} \sigma_s (x)_1 + (\widetilde{c}_2 s_*)^{1/p-1/2}\big[ D' \eta +  C' \big(\frac{(\rho+ 2) \widetilde{D}}{\widetilde{c}_2^{1/2}} + \tau\big) \|w\|^{(\sqrt{\log(e N/m)})} (1- \frac{\eta}{\|w\|_2})\big] \\
&= \frac{C'}{s^{1-\frac{1}{p}}} \sigma_s (x)_1 + s_*^{\frac{1}{p}-\frac{1}{2}}\Big[ \frac{D'}{{\widetilde{c}_2}^{\, \frac{1}{p}-\frac{1}{2}}}  \eta +  \frac{C' \big((\rho+ 2) \widetilde{D} + {\widetilde{c}_2}^{\, 1/2} \tau\big)}{{\widetilde{c}_2}^{\, 1-\frac{1}{p}}} \frac{\|w\|^{(\sqrt{\log(e N/m)})}}{\|w\|_2} (\|w\|_2- \eta)\Big],
\end{align*}
where we used \cref{eq_proof_recovery_guarantee_2} and \cref{eq_proof_recovery_guarantee_3} in the second inequality.
Since $s \mapsto \sigma_s(x)_1/s^{1-1/p}$ is non-increasing, again we may replace $s$ by a smaller value $s' < s= \widetilde{c_2} s_*$.
This concludes the proof of \Cref{theorem_robustnessGuarantees}(a), as the constants can be defined as $C = C'$, $D = D' \widetilde{c}_2^{\, \frac{1}{2}-\frac{1}{p}}$ and $E = \widetilde{c}_2^{\,\frac{1}{p}- 1} C' \big((\rho+ 2) \widetilde{D} + {\widetilde{c}_2}^{\, 1/2} \tau\big)$.
%
%
\end{proof}

\section*{Acknowledgements}
The three authors acknowledge the support of the German-Israeli Foundation (GIF) through the grant G-1266-304.6/2015 (Analysis of structured random matrices in recovery problems).

\bibliography{Literature_QP}

\end{document}